\documentclass[12pt,hyperref]{article}
\usepackage{graphicx} 
\usepackage{float} 
\usepackage{amssymb}
\usepackage{amsmath}
\usepackage{mathtools}
\usepackage[thmmarks,amsmath]{ntheorem} 
\usepackage{lineno}

\usepackage{caption}
\usepackage{subfigure}
\usepackage{bm} 
\usepackage{extarrows}
\usepackage{mathrsfs} 
\usepackage{booktabs} 
{
    \theoremstyle{nonumberplain}
    \theoremheaderfont{\bfseries}
    \theorembodyfont{\normalfont}
    \theoremsymbol{\mbox{$\Box$}}
    \newtheorem{proof}{Proof}
}

\usepackage{theorem}
\newtheorem{theorem}{Theorem}[section]
\newtheorem{proposition}{Proposition}[section]
\newtheorem{lemma}{Lemma}[section]

\newtheorem{definition}{Definition}[section]

\newtheorem{corollary}{Corollary}[section]
\newtheorem{claim}{Claim}[section]
\newtheorem{property}{Property}[section]

\newtheorem{conjecture}{Conjecture}[section]
{
    \theoremheaderfont{\bfseries}
    \theorembodyfont{\normalfont}
    
}
%

\newcommand{\RNum}[1]{\uppercase\expandafter{\romannumeral #1\relax}}


\graphicspath{{figure/}}                                 
\usepackage[a4paper]{geometry}                        
\geometry{left=2.5cm,right=2.5cm,top=2.5cm,bottom=2.5cm} 
\linespread{1.2}   
\setlength{\parskip}{0.3\baselineskip}   


\begin{document}
\title{\bf  Bipartitions with prescribed order of highly connected digraphs\footnote{The author's work is supported by National Natural Science Foundation of China, Grant/Award Number: 12071260.}}
\date{}
\author{\sffamily Yuzhen Qi, Jin Yan\footnote{Corresponding author. E-mail address: yanj@sdu.edu.cn.}, Jia Zhou\\
    {\sffamily\small School of Mathematics, Shandong University, Jinan 250100, China }}
\maketitle

{\noindent\small{\bf Abstract:}
 A digraph is strongly connected if it has a directed path from $x$ to $y$ for every ordered pair of distinct vertices $x, y$ and it is strongly $k$-connected if it has at least $k+1$ vertices and remains strongly connected when we delete any set of at most $k-1$ vertices.  For a digraph $D$, we use $\delta(D)$ to denote $\mathop{\text{min}}\limits_{v\in V (D)} {|N_D^+(v)\cup N_D^-(v)|}$. In this paper, we show the following result. Let $k, l, n, n_1, n_2 \in \mathbb{N}$ with $n_1+n_2\leq n$ and $n_1,n_2\geq n/20$. Suppose that $D$ is a strongly $10^7k(k+l)^2\log(2kl$)-connected digraph of order $n$ with $\delta(D)\geq n-l$. Then there exist two disjoint subsets $V_1, V_2\in V(D)$ with $|V_1| = n_1$ and $|V_2| = n_2$ such that each of $D[V_1]$, $D[V_2]$, and
$D[V_1, V_2]$ is strongly $k$-connected. In particular, $V_1$ and $V_2$ form a partition of $V(D)$ when $n_1+n_2=n$. This result improves the earlier result of Kim, K\"{u}hn, and Osthus [SIAM J. Discrete Math. 30 (2016) 895--911].

\vspace{1ex}
{\noindent\small{\bf Keywords: }digraph, connectivity, partition}

\vspace{1ex}
{\noindent\small{\bf AMS subject classifications.} 05C20, 05C40}

\section{Introduction}

Given a digraph  $D$, we write $D[U]$ for the subgraph of $D$ induced by a vertex set $U$ of $D$, and for two disjoint vertex sets $A,B$ of $D$, we use $D[A, B]$ to denote the subgraph of $D$, which consists of all arcs between $A$ and $B$ in $D$ but no others and $V(D[A, B])=A\cup B$. A digraph is \emph{strongly connected} if it has a directed path from $x$ to $y$ for every ordered pair of distinct vertices $x, y$ and it is $strongly$ $k$-$connected$ if it has at least $k+1$ vertices and remains strongly connected when we delete any set of at most $k- 1$ vertices.

Recently, researchers have shown an increased interest in the partitions of (di)graphs into subgraphs which inherit some properties of the original (di)graph.
  Thomassen \cite{reid(1989)} asked whether for integers $k_1,\ldots,k_t$, there exists $f(k_1,\ldots,k_t)$ such that every strongly $f(k_1, \ldots,k_t)$-connected tournament $T$ admits a vertex-partition $W_1, \ldots, W_t$ such that $T[W_i]$ is a strongly $k_i$-connected subtournament for each $i\in [t]$?

K\"{u}hn, Osthus and Townsend \cite{Kuhn(2016)} answered the above
problem by showing that every strongly $10^7k^6t^3$log$(kt^2)$-connected tournament $T$ admits a vertex-partition $V_1, \ldots, V_t$ such that $T[V_i]$ is a strongly $k$-connected subtournament for each $i\in [t]$. In 2020, Kang and Kim  \cite{Kang(2020)} improved this result in
the following theorem. Here, $\delta(D)=\mathop{\text{min}}\limits_{v\in V (D)} {|N_D^+(v)\cup N_D^-(v)|}$ and $\log k = \log_2 k$ for $k\in \mathbb{N}$.

\begin{theorem}\label{theorem4}
\cite{Kang(2020)} Let $k, t,l, m, n, q, a_1,\ldots, a_t \in \mathbb{N}$ with $t, m \geq 2$, $\sum_{i\in [t]} a_i \leq n$ and $a_i\geq n/(10tm)$ for each $i \in [t]$. Suppose that D is a strongly $10^8qk^2l(k+l)^2tm^2\log(m)$-connected digraph of order n  with $\delta(D)\geq n-l$, and $Q_1,\ldots, Q_t \subseteq V (D)$ are t disjoint sets with $|Q_i|\leq q$ for each $i\in [t]$. Then there exist disjoint sets $W_1, \ldots, W_t$ of $V(D)$ satisfying the following.

\noindent(1) $Q_i \subseteq W_i$.

\noindent (2) For every $i \in [t]$, the subgraph $D[W_i]$ is strongly $k$-connected.

\noindent (3) $|W_i| = a_i$.
\end{theorem}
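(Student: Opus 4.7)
The plan is to build the sets $W_i$ via a controlled random partition of $V(D)$, refined to meet the three requirements simultaneously. I first reserve the prescribed sets $Q_i$ by placing each in its intended target part, and then distribute the remaining vertices randomly, assigning each to part $i$ with probability proportional to $a_i$; this yields sets $W_i'$ of approximately the correct cardinalities. It then remains to ensure that each $D[W_i']$ is strongly $k$-connected, and that the cardinalities can be adjusted exactly to $a_i$ without destroying connectivity or disturbing any $Q_i$.

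The technical core is a random-sampling lemma of roughly the following shape: if $D$ has order $n$, satisfies $\delta(D) \geq n - l$, and is strongly $C_0(k, l, \ldots)$-connected for a suitably large $C_0$, and if $S \subseteq V(D)$ is obtained by including each vertex independently with probability $p \geq 1/(10tm)$, then $D[S]$ is strongly $k$-connected with high probability. The proof goes through Menger's theorem: $D[S]$ fails to be strongly $k$-connected only if $S$ admits a partition $A \sqcup B \sqcup X$ with $|X| \leq k-1$ and no arc from $A$ to $B$ in $D[S] - X$. Strong $C_0$-connectivity of $D$ forces many internally vertex-disjoint paths crossing any such potential separator in the host digraph, and a Chernoff-type estimate bounds the probability that all of them are killed by the random restriction. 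Grouping separators first by their small set $X$ and using $\delta(D) \geq n-l$ to control the combinatorics of the partition $A,B$ of the remaining vertices keeps the union bound tractable.

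To realise the actual partition I would apply this lemma (possibly via iterated halving for unbalanced target sizes) together with a union bound over the $t$ parts, which jointly produce the $tm^2$ and $\log m$ factors in the connectivity bound; the $(k+l)^2$ factor represents the cushion needed in the sampling lemma to absorb the loss of up to $k-1$ vertices in any cut together with the degree deficit $l$; and the factors $q$ and $k^2$ come from paying for each prescribed vertex in $\bigcup_i Q_i$ across the rounds and from the basic Menger cost at parameter $k$. Once the sets $W_i'$ are in place, each differs from $a_i$ only by a small additive error, and I would fix the sizes by transferring vertices between parts one at a time, using $\delta(D) \geq n - l$ to guarantee that any vertex being transferred has enough in- and out-neighbours on both sides so that the swap preserves strong $k$-connectivity; vertices belonging to any $Q_i$ are of course never moved.

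The main obstacle, to my eye, is the sampling lemma itself: a naive union bound over all triples $(A,B,X)$ is far too wasteful, and one must trade off the combinatorial count of potential separators against a Menger-type lower bound on the number of internally disjoint paths that the random restriction would need to destroy simultaneously. Achieving this trade-off uniformly over all relevant sampling probabilities $p$, while at the same time accommodating the prescribed sets $Q_i$ without forfeiting the concentration estimates, is what drives the elaborate form of the connectivity hypothesis. Everything else in the argument — the initial placement of the $Q_i$, the random partition, and the final size correction — should be a relatively routine application of the lemma combined with standard degree-based swap arguments.
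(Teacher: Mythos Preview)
The paper does not prove this theorem at all: it is quoted verbatim from Kang and Kim \cite{Kang(2020)} as background, so there is no ``paper's own proof'' to compare against. That said, your proposed approach is not the one used in \cite{Kang(2020)} (nor in the present paper's proof of its own main result, which adapts the same machinery). Those arguments are entirely constructive: one builds small almost in- and out-dominating sets (Lemmas~\ref{lemma1} and~\ref{lemma2}), uses high connectivity to obtain linkedness (Theorem~\ref{corollary1}), links the dominating sets by short internally disjoint paths, and then assigns vertices to parts by an explicit colouring scheme whose ``safety'' is maintained step by step. No randomness enters.

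Your probabilistic route has a genuine gap at exactly the point you flag as the ``main obstacle'': the sampling lemma does not go through with a union bound. A strongly connected digraph on a random subset $S$ can fail to be strongly $k$-connected via \emph{any} tripartition $A\sqcup B\sqcup X$ of $S$ with $|X|\le k-1$ and no $A\to B$ arc, and the number of such candidate tripartitions is exponential in $|S|$. Menger gives you many internally disjoint $A$--$B$ paths in the host $D$, but each such path may be long, so the probability that it survives restriction to $S$ is not bounded away from zero uniformly; hence the per-separator failure probability is not small enough to beat an exponential count. The condition $\delta(D)\ge n-l$ does constrain the structure of potential cuts, but you have not indicated any mechanism by which it reduces the count from $2^{\Theta(n)}$ to something polynomial, and I do not see one. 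Without that, the argument collapses. The constructive dominating-set-plus-linkage method avoids this entirely by never needing a global union bound: connectivity of each $D[W_i]$ is certified directly by the explicit structures placed inside it.
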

Recently, Gir\~{a}o and Letzter \cite{An(2022)} showed that there exists a constant $c>0$ such that the vertices of every strongly $c\cdot kt$-connected tournament can be partitioned into $t$ parts, each of which induces a strongly $k$-connected tournament.

On the other hand, Kim, K\"{u}hn and Osthus \cite{Kim(2016)} proved that an even stronger type of bipartition can be obtained.
 \begin{theorem}\label{theorem2}
\cite{Kim(2016)} Let $k \in \mathbb{N}$, and let T be a strongly $10^9k^6\log(2k)$-connected tournament. Then  there exists a partition $V_1$, $V_2$ of $V(T)$ such that each of $T [V_1]$, $T [V_2]$, and
$T [V_1, V_2]$ is strongly $k$-connected.
\end{theorem}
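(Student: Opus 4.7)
My plan is to combine a probabilistic bipartition with the linking machinery available for highly connected tournaments. The main external input is a linking theorem asserting that for every $k$ there exists $f(k)$ such that every strongly $f(k)$-connected tournament is $k$-linked: any $2k$ prescribed vertices can be joined by $k$ internally disjoint directed paths matching given source-sink pairs. Under the hypothesis, $T$ is strongly $C$-connected for $C:=10^9k^6\log(2k)$, so the linking property is available both inside $T$ and inside subtournaments obtained by deleting a bounded number of vertices.

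First I would choose a bipartition $V_1\cup V_2=V(T)$ by placing each vertex independently into $V_1$ with probability $1/2$. Strong $C$-connectivity forces $|N_T^+(v)|,|N_T^-(v)|\geq C$ for every $v$, so Chernoff bounds with a union bound yield, with positive probability, a \emph{balanced} bipartition in which for every $v\in V(T)$ and every $i\in\{1,2\}$,
\[
|N_T^+(v)\cap V_i|\geq \tfrac{1}{2}|N_T^+(v)|-\sqrt{n\log n},\qquad |N_T^-(v)\cap V_i|\geq \tfrac{1}{2}|N_T^-(v)|-\sqrt{n\log n}.
\]
Fix such a bipartition; every vertex then has plenty of in- and out-neighbours on each side.

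Next I would verify that $T[V_1]$ and $T[V_2]$ are strongly $k$-connected by a Menger-type argument. Suppose for contradiction that $S\subseteq V_1$ with $|S|<k$ separates $V_1\setminus S$ into non-empty $A,B$ with no $B$-to-$A$ arc inside $T[V_1\setminus S]$. The linking theorem applied in $T$ yields many internally disjoint $B$-to-$A$ paths, fewer than $k$ of which can meet $S$. Using the balance property, each surviving path can be rerouted to avoid $V_2$: every $V_2$-vertex on it has many $V_1$-neighbours, and by reserving disjoint pools of such neighbours for the $k-1$ other paths one can replace each $V_2$-segment by a short detour inside $V_1\setminus S$, producing the forbidden $B$-to-$A$ path.

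The main obstacle is proving that $T[V_1,V_2]$ is strongly $k$-connected. Given $u,v\in V(T)$, one must produce $k$ internally disjoint $u$-to-$v$ paths whose consecutive vertices alternate between $V_1$ and $V_2$. The strategy is to begin with $k$ internally disjoint $u$-to-$v$ paths from the linking theorem in $T$ and to \emph{alternate} them by inserting an opposite-side vertex between any two consecutive same-side vertices, drawing the insert from their common neighbourhood (abundant by the balance property). The delicate point is the combinatorial bookkeeping: up to $\Theta(n)$ insertions must be performed across the $k$ paths while keeping them internally disjoint, and reserving private candidate pools on both sides for every path is what forces the $k^6\log(2k)$ order in the connectivity hypothesis. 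This alternating-path construction is both the technical heart of the proof and the step where the connectivity assumptions are used most tightly.
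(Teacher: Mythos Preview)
Your approach has a fundamental gap at the very first step. The connectivity bound $C := 10^9 k^6 \log(2k)$ depends only on $k$, whereas $n=|V(T)|$ is unbounded. A vertex $v$ may have $|N_T^-(v)|=C$ exactly; after a random split the relevant Chernoff deviation is of order $\sqrt{C\log n}$, which for large $n$ exceeds $C/2$, so you cannot guarantee that $v$ has even one in-neighbour on a prescribed side. (Your stated deviation $\sqrt{n\log n}$ is weaker still and makes the displayed inequality vacuous for \emph{every} vertex.) Since the rerouting step for $T[V_i]$ and the alternating step for $T[V_1,V_2]$ both rest on every vertex having abundant neighbours on each side, the entire downstream argument collapses for these low-degree vertices. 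There is a second, independent gap: even granting balance, the alternating construction for $T[V_1,V_2]$ may require $\Theta(n)$ insertions across the $k$ paths, and you give no mechanism for keeping the resulting paths internally disjoint with only $O_k(1)$ connectivity available; acknowledging that this is ``delicate'' is not a proof.

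The paper's route (which follows Kim--K\"{u}hn--Osthus and then generalises it) is deterministic and confronts the low-degree vertices head-on rather than hoping a random split treats them well. One selects the $6k$ vertices of smallest in-degree and the $6k$ of smallest out-degree, wraps each in a small transitive ``almost-dominating'' set $A_i$ or $B_i$, links each $b_i$ to $a_i$ by a path $P_i$ of controlled parity, and colours all these structures with $I$/$II$ in a fixed pattern. The running invariant is that every coloured vertex is \emph{safe}: it can reach, and be reached from, the non-exceptional part of $T$ through $k$ paths both inside its colour class and inside the bipartite graph $T[V_I,V_{II}]$. Strong $k$-connectivity of $T[V_1]$, $T[V_2]$ and $T[V_1,V_2]$ then follows because any set $S$ of size $k-1$ misses some entire $A_i\cup B_i\cup V(P_i)$, and that structure routes any two vertices through one another. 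The point is that the vertices of extreme in- or out-degree are absorbed into the dominating gadgets at the outset; this is exactly the obstruction a probabilistic bipartition cannot handle.
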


Inspired by Theorems \ref{theorem4} and \ref{theorem2}, we prove the following result.

\begin{theorem}\label{theorem1}
 Let $k, l, n, n_1, n_2 \in \mathbb{N}$ with $n_1+n_2\leq n$ and $n_1,n_2\geq n/20$. Suppose that D is a strongly $10^7k(k+l)^2 \log(2kl)$-connected digraph of order n with $\delta(D)\geq n-l$. Then there exist two disjoint subsets $V_1, V_2$ of $V(D)$ satisfying the following.

\noindent (1) $|V_i| = n_i$ for each $i\in [2]$.

\noindent(2) Each of $D[V_1]$, $D[V_2]$, and $D[V_1, V_2]$ is strongly $k$-connected.

\noindent In particular, $V_1$ and $V_2$ form a partition of $V(D)$ when $n_1+n_2=n$.
\end{theorem}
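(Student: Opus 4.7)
The plan is to construct the bipartition in two coordinated phases, targeting the two connectivity requirements separately. In the first phase, I would invoke (a refinement of) Theorem \ref{theorem4} with $t=2$, $a_1=n_1$, $a_2=n_2$, and $m$ of constant order (permitted since $n_1, n_2\geq n/20$), to produce disjoint $V_1, V_2\subseteq V(D)$ with $|V_i|=n_i$ such that $D[V_i]$ is strongly $k$-connected. Because the connectivity bound assumed in Theorem \ref{theorem1} is strictly sharper than what Theorem \ref{theorem4} supplies (linear rather than quadratic in $k$, and with the $l$-factor only inside $(k+l)^2$), a black-box application will not suffice; I would instead open up the argument of Theorem \ref{theorem4}, replacing its generic linkage subroutines with ones that exploit $\delta(D)\geq n-l$ more efficiently, using the fact that outside an $l$-bounded set of exceptions each vertex sees almost all of $V(D)$.

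The second phase is to ensure that $D[V_1, V_2]$ is strongly $k$-connected. My approach is to reserve, before committing to the bipartition, a small \emph{linkage skeleton}: $k$ internally-disjoint alternating dipaths $P_1,\dots,P_k$, each built from $O(k+l)$ anchor vertices that will be distributed between $V_1$ and $V_2$. These skeleton paths are extracted from $D$ using the full $10^7k(k+l)^2\log(2kl)$-connectivity together with a linking lemma in the spirit of Kim--K\"uhn--Osthus. Once the remaining vertices have been partitioned subject to the size constraints, the density hypothesis $\delta(D)\geq n-l$ forces every $v\in V_1\cup V_2$ to have $\Omega(n)$ in- or out-neighbors in each part; combined with the skeleton and Menger's theorem, this yields $k$ internally-disjoint alternating dipaths between any prescribed pair of endpoints in $D[V_1, V_2]$, so the bipartite subgraph is strongly $k$-connected.

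The main obstacle I anticipate is coordinating the two phases at the improved connectivity bound: the swaps used to enforce $k$-connectivity of $D[V_i]$ in Phase 1 must not destroy the Phase-2 skeleton, and conversely the skeleton must not interfere with the Phase-1 linkage. I expect to handle this by carving out a small reservoir $R\subseteq V(D)$ of size $O\bigl((k+l)^2\log(2kl)\bigr)$ at the outset, running both phases on $V(D)\setminus R$ (which is still highly connected), and then absorbing the vertices of $R$ into $V_1\cup V_2$ through Menger-type rerouting. The final size reconciliation---which in the special case $n_1+n_2=n$ forces every vertex of $D$ to be placed in $V_1$ or $V_2$---should then follow from a short absorbing step, using the slack built into the density-based degree estimates to route each leftover vertex into whichever part still has capacity without upsetting the established connectivity.
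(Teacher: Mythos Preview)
Your proposal has a genuine gap in Phase~2. The assertion that ``the density hypothesis $\delta(D)\geq n-l$ forces every $v\in V_1\cup V_2$ to have $\Omega(n)$ in- or out-neighbors in each part'' is both true and useless: the hypothesis only says $|N_D^+(v)\cup N_D^-(v)|\geq n-l$, so a vertex $v\in V_1$ could have \emph{all} of its neighbours in $V_2$ as in-neighbours and none as out-neighbours. Such a $v$ is a sink in $D[V_1,V_2]$, and no skeleton of $k$ alternating dipaths plus Menger will manufacture an out-arc from $v$ across the cut. The paper confronts exactly this difficulty: it builds $6k$ small almost out-dominating sets $A_i$ and almost in-dominating sets $B_i$, links each $B_i$ to $A_i$ by a path $P_i$, and then colours vertices so that every vertex is \emph{safe}, meaning it can reach (and be reached from) the non-exceptional core inside $D[V_I]$, $D[V_{II}]$, and $D[V_I,V_{II}]$ even after deleting $k-1$ vertices. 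The entire machinery of exceptional sets $E_A,E_B$ and the safety properties (s1)--(s4) exists to handle vertices whose orientation into the other side is one-sided; your sketch has no substitute for this.

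A second, smaller gap: $k$ skeleton paths cannot certify strong $k$-connectivity of $D[V_1,V_2]$. Given $x,y$ and a separator $S$ with $|S|\le k-1$, the endpoints $x',y'$ you route to may lie in any of the four combinations $(V_1,V_1)$, $(V_2,V_2)$, $(V_1,V_2)$, $(V_2,V_1)$, and each requires an alternating $b_i$--$a_i$ path of the correct parity with $a_i,b_i$ in the correct parts. This is why the paper reserves $4k$ structures (indices $i\in[2k+1,6k]$) for the bipartite piece, with parity and endpoint-colour constraints built into the definition of a ``correct $i$-path''. Finally, your plan to run Phase~1 and Phase~2 separately and reconcile them via a reservoir is precisely what the paper's unified colouring avoids: the connectivity of $D[V_1]$, $D[V_2]$, and $D[V_1,V_2]$ are certified by a single system of dominating sets and paths, colour-distributed simultaneously, so there is no coordination problem to solve after the fact.
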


Camion Theorem \cite{Camion(1959)} states that every strongly connected tournament contains a Hamiltonian cycle. In 1966, Moon \cite{Moon(1966)} extended this result by showing that  for any strongly connected tournament $T$ and a vertex $v \in V (T)$, there exists a cycle $C$ of length $ 3\leq |C|\leq |T|$ such that $v\in V(C)$. Moreover, we prove the following result.

\begin{corollary}\label{corollary}
Let $n,t\in \mathbb{N}$  with $n\geq 6 $ and $3 \leq t \leq n- 3$, and let  $T$ be a strongly $4\cdot 10^7$-connected tournament of order n with a vertex $v\in V(T)$. Then there exist two disjoint cycles $C_1$ and $C_2$ in $T$ such that $v\in V(C_1)$, $|C_1|=t$ and $|C_2|=n-t$.
\end{corollary}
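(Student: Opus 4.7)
The plan is to apply Theorem~\ref{theorem1} with $k=l=1$. The required connectivity $10^7k(k+l)^2\log(2kl) = 10^7\cdot 1\cdot 4\cdot \log 2 = 4\cdot 10^7$ matches the hypothesis exactly, and since $T$ is a tournament we have $\delta(T)=n-1=n-l$. I take $n_1=\lceil n/2\rceil$ and $n_2=\lfloor n/2\rfloor$, so that $n_1+n_2=n$ and, for $n\ge 6$, both satisfy $n_i\ge n/20$. Theorem~\ref{theorem1} then yields a vertex partition $V_1,V_2$ of $V(T)$ such that each of $T[V_1]$, $T[V_2]$, and $T[V_1,V_2]$ is strongly connected. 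Relabelling if necessary, I may assume $v\in V_1$.

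The structural heart of the argument is a \emph{gluing observation}: for every $W\subseteq V_1$, the induced subtournament $T[V_2\cup W]$ is strongly connected, and the symmetric statement holds for $W'\subseteq V_2$ inside $T[V_1\cup W']$. Indeed, strong connectivity of the bipartite digraph $T[V_1,V_2]$ forces every vertex of $V_1$ to possess at least one in-neighbour and one out-neighbour in $V_2$; combined with the strong connectivity of $T[V_2]$, this lets us route a directed path between any two vertices of $V_2\cup W$ through $V_2$ (and similarly on the other side).

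I then split on the size of $t$ relative to $|V_1|$. If $t\le |V_1|$, Moon's theorem applied to the strongly connected $T[V_1]$ yields a cycle $C_1$ of length $t$ through $v$; the complementary set $V_2\cup(V_1\setminus V(C_1))$ has size $n-t\ge 3$ and is strongly connected by the gluing observation, so Camion's theorem supplies a Hamilton cycle $C_2$ of length $n-t$, disjoint from $C_1$. If $t>|V_1|$, then $n-t<|V_2|$, and Moon's theorem in $T[V_2]$ produces a cycle $C_2$ of length $n-t$; the set $V_1\cup(V_2\setminus V(C_2))$ has size $t$, contains $v$, and is strongly connected by the symmetric gluing observation, so Camion's theorem yields its Hamilton cycle, which is the required $C_1\ni v$ of length $t$. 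The principal obstacle is the gluing observation; once it is established from the bipartite strong connectivity furnished by Theorem~\ref{theorem1}, the remainder is a routine marriage of Moon's and Camion's theorems, and in particular the case analysis avoids the awkwardness of not being able to prescribe which side of the partition contains $v$.
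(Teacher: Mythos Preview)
Your proof is correct and follows essentially the same approach as the paper: apply Theorem~\ref{theorem1} with $k=l=1$ to obtain the partition, then combine Moon's theorem on one side with Camion's theorem on the complement, using the bipartite strong connectivity to verify that the complement is strong. Your version is in fact slightly more careful than the paper's, since you explicitly specify $n_1=\lceil n/2\rceil$, $n_2=\lfloor n/2\rfloor$ and spell out the ``gluing observation'' that the paper leaves implicit.
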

\begin{proof}
By Theorem \ref{theorem1}, there exists a partition  $V_1$, $V_2$ of $V(T)$ such that each of $T [V_1]$, $T [V_2]$, and
$T [V_1, V_2]$ is strongly connected. One may assume $v\in V_1$. If $|V_1|\geq t$, then by Moon Theorem,  $T [V_1]$ has a  cycle $C_1$ of length $t$ and $v\in V(C_1)$. Because  $T [V_1, V_2]$ is strongly connected, each vertex $u\in V_1\setminus V(C_1)$ has an in-neighbor and an out-neighbor in $V_2$. So, $V(T)\setminus V(C_1)$ is strong. Owing to Camion Theorem, we are done. Otherwise, Moon Theorem shows that $T [V_2]$ has a cycle $C_2$ of length $n-t$. Similarly, $V(T)\setminus V(C_2)$ is strong, and then $T[V(T)\setminus V(C_2)]$ has a  Hamiltonian cycle.
\end{proof}

Partitioning an undirected graph into several parts under connectivity constraint has been widely studied due to
its important applications. A number of classical results in undirected graphs have been achieved.  Surveys such as that conducted by Hajnal \cite{Hajnal(1983)} and Thomassen \cite{Thomassen(1983)} proved that for every $k$ there exists an integer $f(k)$ such that every $f(k)$-connected graph has a vertex partition into sets $A$ and $B$ so that both $A$ and $B$ induce $k$-connected graphs. Later, K\"{u}hn and Osthus \cite{Kuhn(2003)} further strengthened this by requiring that each vertex in $A$ has many neighbours in $B$. 

The rest of the paper is organized as follows. Section 2 contains preliminaries and tools throughout the paper. The proof of Theorem \ref{theorem1} is given in Section 3, which consists of 4 subsections.

\section{Preliminaries and tools}
\subsection{Basic terminology}
 Given $k\in \mathbb{N}$, let $[k]= \{1, \ldots, k\}$ and $[k, k+l] = \{k,\ldots, k+l\}$. The digraphs considered in this paper are always simple (without loops and multiple arcs). Let $D$ be a digraph with vertex set $V(D)$ and arc set $A(D)$. We write $|D|$ for the number of vertices in $D$ and $xy$ for an arc directed from $x$ to $y$. Moreover, we call $x$  \emph{in-neighbor} of $y$ and $y$ \emph{out-neighbor} of $x$ or $x$ dominates $y$ and $y$ is dominated by $x$. For any vertex $x\in V(D)$, let
$N^+_D (x) = \{y: xy\in A(D)\}$ and $N^-_D (x) = \{y: yx\in A(D)\}$ be the \emph{out-neighborhood} and the \emph{in-neighborhood} of $x$, respectively. Similarly, define $N^{-!}_D (x) = \{y: yx\in A(D)\text{ and } xy\notin A(D) \}$ and $N^{+!}_D (x) = \{y: xy\in A(D)\text{ and } yx\notin A(D) \}$.
We call $d_D^+ (x) = |N_D^+(x)|$,  $d_D^- (x) = |N_D^-(x)|$, $d^{-!}_D (x) = |N^{-!}_D (x)|$ and $d^{+!}_D (x) = |N^{+!}_D (x)|$
 the \emph{out-degree}, the \emph{in-degree}, the \emph{sole in-degree}  and the \emph{sole out-degree} of $x$ in $D$, respectively. Further, let $\delta^-(D)$ and $\delta^+(D)$ denote the\emph{ minimum in-degree} and the \emph{minimum out-degree} of $D$.

Given a digraph $D$ and a path $P=v_1v_2\cdots v_l$ of $D$, we use $v_iPv_j$ to denote the subpath of $P$ from $v_i$ to $v_j$, and denote the set $V(P)\setminus \{v_1,v_{l}\}$ of internal vertices of $P$ by $\text{Int}(P)$.  The \emph{length} of $P$ is the number of its vertices.
A path $P$ is called $odd$ if its length is odd, and $even$ if its length is even.

Let $A$ and $B$ be two subsets of $V(D)$. We say that $A$ \emph{in-dominates} (resp. \emph{out-dominates}) $B$ if for every vertex $b\in B$ there exists a vertex $a\in A$ such that $ba\in A(D)$ (resp. $ab\in A(D)$). Further, we call $A$ an \emph{almost in-dominating set} (resp. \emph{almost out-dominating set}) if there exists a vertex subset $E$ of $V(D)$ such that $A$ in-dominates   (resp. out-dominates) $V(D)\setminus (A\cup E)$. In particular, if $E=\emptyset$, then  we call $A$ an \emph{in-dominating set} (resp. \emph{out-dominating set}). For a vertex set $U$ and a vertex $v$ in $D$, we say that $(v, U)$ is $k$\emph{-connected} in $D$ if for any subset $S \subseteq V (D) \setminus \{v\}$ with $|S|\leq k -1$, there exists a path from $v$ to a vertex in $U\setminus S$ in $D \setminus S$. Similarly, we say
$(U, v)$ is $k$\emph{-connected} in $D$ if for any subset $S\subseteq V (D) \setminus \{v\}$ with $|S| \leq k-1$, there exists
a path from a vertex in $U \setminus S$ to $v$ in $D\setminus S$. Note that $U$ does not have to be a subset of
$V(D)$ in this definition. However, if $(v, U)$ is $k$-connected in $D$ or $(U, v)$ is $k$-connected
in $D$, then either $v \in U$ or $|U \cap V (D)|\geq k$.


%

%

\subsection{Some lemmas}
We now collect some results that will be used frequently in the whole paper. The following two lemmas guarantee the existence of a suitable almost dominating set in a digraph close to being semicomplete. A tournament $T$ of order $n$ is \emph{transitive} if there exists an ordering $v_1, \ldots ,v_n$ of its vertices such that
$v_iv_j\in A(T)$ if and only if $i < j$. In this case, we often say that $v_1$ is the \emph{tail} of $T$ and $v_n$ is the
\emph{head} of $T$.

\begin{lemma}\label{lemma1}
Let $n,l,c \in \mathbb{N}$ with $c\geq 2$ and let $D$ be a digraph with $\delta(D) \geq n-l$. For any vertex $v\in V(D)$ with $d_D^-(v)\geq 2^{c-1}l$, there exist disjoint sets $A, E \subseteq V(D)$ and a vertex $a \in A$ such that the following properties hold:

\noindent (i) $2\leq |A| \leq c$, and $D [A]$ has  a spanning transitive tournament with tail $a$ and head $v$.

\noindent (ii) $A \setminus \{a\}$ out-dominates $V (D)\setminus (A\cup E)$.

\noindent (iii) $|E| \leq (1/2)^{c-2}d^-_D(v) + c(l-1)$.
\end{lemma}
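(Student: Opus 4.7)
The plan is to build the transitive tournament in reverse order: start with $u_c = v$ and iteratively pick $u_{c-1}, u_{c-2}, \ldots, u_2$, with the tail $a = u_1$ chosen last. At stage $j$, having already fixed $u_c, u_{c-1}, \dots, u_{c-j+1}$, I maintain two sets: the uncovered set
\[
B_{j-1} \;=\; V(D)\setminus\bigcup_{i\ge c-j+1}\bigl(\{u_i\}\cup N_D^+(u_i)\bigr),
\]
of vertices not yet out-dominated by a chosen vertex, and the common in-neighbourhood
\[
R_{j-1} \;=\; \bigcap_{i\ge c-j+1} N_D^-(u_i),
\]
of candidates still admissible as the next vertex of the transitive tournament. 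Using $\delta(D)\ge n-l$, each vertex has at most $l-1$ non-neighbours in $D$, so $|B_0|\le d_D^-(v) + (l-1)$ and $|R_0|=d_D^-(v)\ge 2^{c-1}l$.

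The greedy step rests on the following double-counting fact: for any $S\subseteq V(D)$, each vertex of $S$ has at most $l-1$ non-neighbours in $S$, hence $D[S]$ has at least $|S|(|S|-l)/2$ arcs and some vertex of $S$ has out-degree at least $(|S|-l)/2$ within $S$. I apply this to $S=R_{j-1}\cap B_{j-1}$, whose size is at least $|B_{j-1}|-j(l-1)$ (any $x\in B_{j-1}\setminus R_{j-1}$ must be a non-neighbour of one of the $j$ already-picked vertices). This yields $u_{c-j}\in R_{j-1}\cap B_{j-1}$ with
\[
\bigl|N_D^+(u_{c-j})\cap B_{j-1}\bigr|\;\ge\;\tfrac{1}{2}\bigl(|B_{j-1}|-j(l-1)-l\bigr).
\]
Substituting into $|B_j|\le |B_{j-1}|-1-|N_D^+(u_{c-j})\cap B_{j-1}|$ and unrolling the recurrence gives, by a routine induction, $|B_j|\le (1/2)^{j}d_D^-(v) + (j+2)(l-1)$, which at $j=c-2$ matches the target bound (iii).

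I iterate for $j=1,\dots,c-2$ and then pick the tail $a=u_1$ from the remaining common in-neighbourhood $R_{c-2}$, setting $A=\{u_1,\dots,u_c\}$ and $E=B_{c-2}$; conclusions (i) and (ii) are immediate from the construction. If at some step $j_0<c-2$ the pool $R_{j_0-1}\cap B_{j_0-1}$ is already too small for the averaging to progress, I stop there: the residual satisfies $|B_{j_0-1}|\le j_0(l-1)+l\le c(l-1)+1$, which is within the target since $d_D^-(v)\ge 2^{c-1}l$ contributes at least $2l$ to the first term of the bound.

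\textbf{Main obstacle.} The delicate point is to guarantee that the common in-neighbourhood $R_{c-2}$ is nonempty at the end, so that the tail $a$ can be chosen. Each selection of $u_{c-j}$ can delete from $R_{j-1}$ up to $1+|R_{j-1}\cap N_D^+(u_{c-j})|+(l-1)$ vertices (namely $u_{c-j}$ itself, its sole out-neighbours inside $R_{j-1}$, and its non-neighbours), and the greedy criterion used to control $|B_j|$ tends to pick $u_{c-j}$ with many out-neighbours in $R_{j-1}\cap B_{j-1}\subseteq R_{j-1}$. The required parallel bookkeeping thus hinges on showing that the greedy choice can be made compatibly with preserving enough of $R$. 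The hypothesis $d_D^-(v)\ge 2^{c-1}l$ — giving the initial slack $|R_0|=d_D^-(v)\ge 2^{c-1}l$ — is exactly what allows $R$ to absorb the geometric shrinkage over $c-2$ steps and still leave a vertex for the tail.
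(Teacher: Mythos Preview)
Your plan follows the paper's approach almost exactly: build the transitive tournament backward from $v$, maintain the uncovered set and the common in-neighbourhood, and use an averaging argument inside the common in-neighbourhood to roughly halve the uncovered set at each step. Your recurrence for $|B_j|$ and the induction reaching the bound in (iii) are fine.

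The gap is precisely the point you flag as the ``main obstacle'' and then leave unresolved. You give no argument that $R_{c-2}$ (or $R_{j_0-1}$ in the early-stopping case) is nonempty, and your assertion that the initial slack $|R_0|\ge 2^{c-1}l$ ``allows $R$ to absorb the geometric shrinkage'' does not follow from your greedy criterion. Maximising out-degree in $D[S]$ can select a vertex $u_{c-j}$ with in-degree $0$ in $D[R_{j-1}]$; then every vertex of $R_{j-1}\setminus\{u_{c-j}\}$ is either a sole out-neighbour or a non-neighbour of $u_{c-j}$, and $R_j=R_{j-1}\cap N_D^-(u_{c-j})$ is empty. Nothing in your bookkeeping prevents this, and no amount of initial slack in $|R_0|$ helps once $R_j=\emptyset$.

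The paper closes this gap with a small but essential tweak to the selection rule. Rather than averaging over all of $U_i$ (your $R_{j-1}$), it first passes to the subset $U_i'\subseteq U_i$ of vertices with in-degree at least $1$ in $D[U_i]$, and picks $v_{i+1}$ there (with minimum sole in-degree, which plays the role of your ``large out-degree''). Vertices of in-degree $0$ in $D[U_i]$ are pairwise non-adjacent and hence number at most $l$; since one only needs to continue while $|E_i|$ exceeds the target, which forces $|U_i|>2l$, the set $U_i'$ is still large enough for the halving to go through. And because $v_{i+1}\in U_i'$ has an in-neighbour $w\in U_i$, that $w$ lies in $U_{i+1}=U_i\cap N_D^-(v_{i+1})$, so $U_{i+1}\neq\emptyset$ automatically. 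This is what guarantees the tail $a$ can always be chosen.
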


\begin{proof}
Let $v_1:= v$. To this end, we will find $A$ by choosing vertices $v_1,\ldots ,v_i$ such that $v_i\in \bigcap\limits_{j\in [i-1]} N^{-}(v_j)$, and the order of union of the out-neighborhood of $v_j$  is as large as possible at each step. More precisely, suppose inductively that for some $1 \leq i < c$ we have already found a set $A_i = \{v_1,\ldots ,v_i\}$ and a set $E_i$ such that the following hold:

\noindent (a) $D[A_i]$ has a spanning transitive tournament with tail $v_i$ and head $v_1$;

\noindent (b)  $A_i$ out-dominates $V(D) \setminus (A_i \cup E_i)$, where $E_i = V(D)\setminus \bigcup\limits_{j\in[i]} N^{+}(v_i)$;

\noindent (c) $|E_i| \leq (1/2)^{i-1}d^-_D(v) + (i+1)(l-1)$.

Note that $A_1$ satisfies (a)-(c), since $E_1=N_D^{-}(v_1)$ and $|E_1| \leq d^-_D(v_1) + 2(l-1)$. Set $U_i= \bigcap\limits_{j\in [i]} N^{-}(v_j)$. We will either find desired sets $A$ and $E$, or extend $A_i$ by adding a new vertex in $U_i$. We see that if $|E_i| \leq (1/2)^{c-2}d^-_D(v) + c(l-1)$, then choose $a\in U_i$. Let $A: = A_i \cup \{a\}$ and $E: = E_i \setminus\{a\}$, as required. Hence $|E_i|> (1/2)^{c-2}d^-_D(v) + c(l-1)$. It follows from the definition of $U_i$ that $|U_i|> 2l$. Let $U_i^\prime$ be a subset of $U_i$ such that the in-degree of each vertex in $U_i^\prime$ is at least one. Obviously, $|U_i^\prime|\geq  l$ and $U_i^\prime$ has a vertex $u$ of sole in-degree at most $|U_i^\prime|/2$ in $D[U_i^\prime]$. Choose  $v_{i+1}:=u$.
Therefore,
\begin{equation*}
|E_{i+1}| = |V(D)\setminus \bigcup\limits_{j\in[i+1]} N^{+}(v_j)|
\leq (1/2)^{i}d^-_D(v)+(i+2)(l-1).
\end{equation*}
By repeating this construction, we will eventually gain $|E_i| \leq (1/2)^{c-2}d^-_D(v) + c(l-1)$  for some $i\leq c-1$. Then choose $a\in U_i$. Let $A := A_i \cup \{a\}$ and $E:= E_i \setminus\{a\}$. This completes the proof of Lemma \ref{lemma1}.
\end{proof}

The next lemma follows immediately from Lemma \ref{lemma1} by reversing the orientations of all arcs.

\begin{lemma}\label{lemma2}
 Let $n,l,c \in \mathbb{N}$ with $c\geq 2$ and let $D$ be a digraph with $\delta(D) \geq n-l$. For any vertex $v\in V(D)$ with $d_D^+(v)\geq 2^{c-1}l$, there exist disjoint sets $B, E \subseteq V(D)$ and a vertex $b \in B$ such that the following properties hold:

\noindent (i)   $2 \leq |B| \leq c$, and $D[B]$ has a spanning transitive tournament with tail $v$ and head $b$;

\noindent (ii)  $B\setminus \{b\}$ in-dominates $V(D) \setminus (B\cup E)$;

\noindent (iii) $|E| \leq (1/2)^{c-2}d^+_D(v) +c(l-1)$.
\end{lemma}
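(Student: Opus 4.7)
The plan is to derive Lemma \ref{lemma2} as a direct corollary of Lemma \ref{lemma1} by passing to the reverse digraph. Let $D'$ denote the digraph obtained from $D$ by reversing the orientation of every arc, so that $V(D')=V(D)$ and $xy \in A(D')$ iff $yx \in A(D)$. Under this operation, in-neighborhoods and out-neighborhoods swap: $N_{D'}^{+}(x)=N_D^{-}(x)$ and $N_{D'}^{-}(x)=N_D^{+}(x)$ for every vertex $x$. Consequently $|N_{D'}^{+}(x)\cup N_{D'}^{-}(x)|=|N_D^{+}(x)\cup N_D^{-}(x)|$, so $\delta(D')=\delta(D)\geq n-l$; moreover $d_{D'}^{-}(v)=d_D^{+}(v)\geq 2^{c-1}l$, so the hypotheses of Lemma \ref{lemma1} are met for the vertex $v$ in the digraph $D'$.

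Applying Lemma \ref{lemma1} to $D'$ and $v$ produces disjoint sets $A,E\subseteq V(D')$ and a vertex $a\in A$ satisfying items (i)--(iii) of that lemma (in $D'$). I would then set $B:=A$ and $b:=a$, and verify that each conclusion translates correctly back to $D$. For (i), $D'[A]$ contains a spanning transitive tournament with tail $a$ and head $v$; reversing every arc turns this into a spanning transitive tournament of $D[B]$ with tail $v$ and head $b$, while $2\leq |B|\leq c$ is unaffected by reversal. For (ii), the statement that $A\setminus\{a\}$ out-dominates $V(D')\setminus(A\cup E)$ means that for every $w\in V(D')\setminus(A\cup E)$ there is $x\in A\setminus\{a\}$ with $xw\in A(D')$, i.e.\ $wx\in A(D)$; this is precisely the assertion that $B\setminus\{b\}$ in-dominates $V(D)\setminus(B\cup E)$. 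For (iii), the bound $|E|\leq (1/2)^{c-2}d^{-}_{D'}(v)+c(l-1)$ becomes $|E|\leq (1/2)^{c-2}d^{+}_D(v)+c(l-1)$ upon substituting $d^{-}_{D'}(v)=d^{+}_D(v)$.

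Since all three conditions translate verbatim through reversal, no further calculation is needed, and there is no genuine obstacle; the only thing to be careful about is tracking the head/tail orientation of the transitive tournament under arc reversal and making sure the direction of domination flips from ``out'' to ``in'' as claimed. This is exactly the observation the authors invoke when they say the lemma ``follows immediately from Lemma \ref{lemma1} by reversing the orientations of all arcs.''
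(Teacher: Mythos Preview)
Your proposal is correct and is precisely the argument the paper uses: it states that Lemma~\ref{lemma2} ``follows immediately from Lemma~\ref{lemma1} by reversing the orientations of all arcs,'' and your write-up simply makes that reversal explicit, checking that degrees, the transitive tournament's head/tail, and the direction of domination all swap as required.
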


 A digraph $D$ is $k$\emph{-linked} if $|D|\geq 2k$
and, whenever $(x_1, y_1),\ldots , (x_k, y_k)$ are ordered pairs of (not necessarily distinct) vertices of $D$, there exist distinct internally disjoint paths $P_1,\ldots, P_k$
such that for all $i\in [k]$ we have that $P_i$ is a path from $x_i$ to $y_i$ and that $\{x_1, \ldots, x_k, y_1,\ldots,y_k\} \cap V (P_i) =\{x_i, y_i\}$. The following proposition follows immediately from the definition of linkedness and the pigeonhole principle.

\begin{proposition}\label{proposition2}
Let $k,s \in \mathbb{N}$, and let $D$ be a $ks$-linked digraph. Let $(x_1, y_1),\ldots , (x_k, y_k)$ be ordered pairs of (not necessarily distinct) vertices of $D$. Then the following two statements hold.

 \noindent(i) $D$ contains $ks$ distinct internally disjoint paths $P^1_1,\ldots,P_k^s$ such that for all $i\in [k]$ and $j\in [s]$ we have that
$P_i^j$  is a directed path from $x_i$ to $y_i$ and that $\{x_1, \ldots, x_k, y_1,\ldots,y_k\} \cap V (P_i^j) =\{x_i, y_i\}$.

 \noindent (ii) There exist $s^\prime\leq s$ internally disjoint paths such that the length of these paths is at most  $(s^\prime|D|)/s$.
\end{proposition}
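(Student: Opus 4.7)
The plan is to derive both parts from a single application of the $ks$-linkedness hypothesis, with (ii) following from (i) by a routine averaging step.

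For part (i), I would form an auxiliary list of $ks$ (not necessarily distinct) ordered pairs by repeating each of the given pairs exactly $s$ times, namely
\begin{equation*}
\underbrace{(x_1,y_1),\ldots,(x_1,y_1)}_{s\text{ copies}},\ \underbrace{(x_2,y_2),\ldots,(x_2,y_2)}_{s\text{ copies}},\ \ldots,\ \underbrace{(x_k,y_k),\ldots,(x_k,y_k)}_{s\text{ copies}}.
\end{equation*}
Because the definition of $ks$-linkedness explicitly permits ordered pairs that are not necessarily distinct, feeding this list into the definition immediately yields $ks$ distinct internally disjoint paths whose internal vertices avoid every terminal in $\{x_1,\ldots,x_k,y_1,\ldots,y_k\}$. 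Relabelling the $s$ paths returned for $(x_i,y_i)$ as $P_i^1,\ldots,P_i^s$ then matches the statement of (i) on the nose.

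For part (ii), I would combine (i) with pigeonhole. Since the $ks$ paths $P_i^j$ are pairwise internally disjoint, the total number of internal vertices used across all of them is at most $|D|$, so $\sum_{i,j}|V(P_i^j)|\le |D|+2ks$. Partition the $ks$ paths into $s$ groups indexed by $j\in[s]$, where the $j$-th group $\{P_1^j,\ldots,P_k^j\}$ already consists of $k$ internally disjoint paths linking all the prescribed pairs. Averaging over $j$ picks out a group whose total length is at most $(1/s)\sum_{i,j}|V(P_i^j)|\le |D|/s+2k$; a further averaging (either selecting the $s'$ shortest paths from this group, or from the whole collection of $ks$) extracts internally disjoint paths of the desired number $s'\le s$ with total length at most $s'|D|/s$, which is the stated bound.

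No genuine difficulty is anticipated: both parts are elementary corollaries of the definition of linkedness and a pigeonhole count. The only care required is bookkeeping — deciding whether ``length'' counts vertices or arcs, tracking how endpoints are (or are not) double-counted when several paths share them, and verifying that the repetition of identical pairs in the auxiliary list is legitimate under the ``not necessarily distinct'' clause of the linkedness definition. None of these raise a structural obstacle.
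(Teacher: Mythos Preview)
Your proposal is correct and matches the paper's own treatment: the paper does not give a detailed proof but simply states that the proposition ``follows immediately from the definition of linkedness and the pigeonhole principle,'' which is exactly the two-step plan you outline (repeat each pair $s$ times and invoke $ks$-linkedness for (i); average over the internally disjoint paths for (ii)). The bookkeeping concerns you flag about endpoint counting are real but minor, and the paper glosses over them as well.
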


It is easy to see that every $k$-linked digraph is strongly $k$-connected. The converse is not true. Meng, Rolek, Wang and Yu \cite{Meng(2021)} proved that every strongly $(40k -31)$-connected tournament is $k$-linked. Later, Bang-Jensen and Johansen \cite{Bang(2022)} improved their result by showing  that every strongly $(13k-6)$-connected tournament with minimum out-degree at least $28k-13$ is $k$-linked. Recently, Zhou, Qi and Yan \cite{Zhou(2018)} extended the result to a digraph by proving the following theorem.


\begin{theorem}\label{corollary1}
\cite{Zhou(2018)} For each $n, k, l \in \mathbb{N}$, every strongly $(50k+20l)$-connected digraph with $\delta(D)\geq n-l$ is $k$-linked.
\end{theorem}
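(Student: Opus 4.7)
Given $D$ strongly $(50k+20l)$-connected with $\delta(D)\geq n-l$, and ordered pairs $(x_1,y_1),\ldots,(x_k,y_k)$, the plan is to prove the $k$-linkedness by constructing the internally disjoint paths $P_i$ from $x_i$ to $y_i$ routed through small ``absorbing gadgets'' at their endpoints. Strong $(50k+20l)$-connectivity forces $d^+(v),d^-(v)\geq 50k+20l$ for every vertex $v$, which is large enough to invoke Lemmas \ref{lemma1} and \ref{lemma2} at every endpoint. For each $i$, I would apply Lemma \ref{lemma2} at $x_i$ to obtain $B_i\ni x_i$ of bounded size with distinguished head $b_i$ and exception set $E_i^B$, so that every vertex outside $B_i\cup E_i^B$ has an arc into $B_i\setminus\{b_i\}$; dually, apply Lemma \ref{lemma1} at $y_i$ to obtain $A_i\ni y_i$ with distinguished tail $a_i$ and exception set $E_i^A$, so that $A_i\setminus\{a_i\}$ sends arcs to every vertex outside $A_i\cup E_i^A$. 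The parameter $c$ in each lemma controls a trade-off between gadget size and exception-set size, to be tuned against the total connectivity budget.

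The target shape for each $P_i$ is
\[ x_i \;\to\; b_i \;\to\; v_i \;\to\; \cdots \;\to\; u_i \;\to\; a_i \;\to\; y_i, \]
where $x_i\to b_i$ and $a_i\to y_i$ are arcs inside the transitive tournaments of $B_i$ and $A_i$, and the middle segment $v_i\to\cdots\to u_i$ is a path in $D$ that avoids the gadgets, the exception sets, and the other pairs' endpoints. The vertices $v_i\in N^+(b_i)$ and $u_i\in N^-(a_i)$ are entry/exit points; since $b_i$ has out-degree and $a_i$ has in-degree at least $50k+20l$, there are abundant candidates. The middle segments are to be produced jointly and disjointly by a Menger-style fan argument in the sub-digraph obtained by deleting the gadget interiors and the other pairs' endpoints; the in-/out-dominating properties guarantee that most vertices can play the role of $v_i$ or $u_i$, adding essential flexibility when constructing the linkage.

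The principal obstacle is twofold. First, one must control the exception sets so that removing the forbidden set $\bigcup_i(B_i\cup A_i\cup E_i^B\cup E_i^A)$ still leaves a sub-digraph of sufficient strong connectivity; I would balance this by tuning $c$ and, where necessary, letting the exception sets overlap across pairs or handling them on the fly rather than excising them outright. Second, and more substantively, one must secure the \emph{correct pairing}: Menger's theorem produces $k$ internally disjoint paths between two sets but does not a priori match pair $i$'s entry to pair $i$'s exit. To force the right matching I would exploit the in-/out-dominating properties, which make each gadget reachable from (and able to reach) $\Omega(n)$ vertices, and apply a local re-routing argument: whenever the Menger output contains a mismatched pair of paths, the two paths can be reassigned alternative entry/exit vertices via the dominating neighborhoods, strictly reducing a mismatch count, and iterated until all pairs are correctly linked. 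The hypothesis $50k+20l$, well above the minimum $k$ needed for Menger, provides precisely the slack that makes this swap procedure feasible and is the reason the additive $20l$ correction appears in the statement.
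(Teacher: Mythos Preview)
This theorem is not proved in the present paper; it is quoted from \cite{Zhou(2018)} and used as a black box. Consequently there is no proof here against which to compare your proposal, and you should not expect the paper's internal lemmas (Lemmas~\ref{lemma1} and~\ref{lemma2}) to have been designed with this particular linkage statement in mind.

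As for the proposal itself, the overall architecture---attach small almost-dominating gadgets at the $x_i$ and $y_i$, then link the gadget tips via disjoint paths in the remainder---is the standard shape of tournament-linkage arguments. The genuine gap is in your second ``principal obstacle'': the re-routing/swap procedure for forcing the correct pairing is asserted rather than argued. Menger gives $k$ disjoint paths from a set of sources to a set of sinks, but the pairing it produces is an arbitrary bijection; saying that the dominating neighborhoods let you ``reassign alternative entry/exit vertices'' and ``strictly reduce a mismatch count'' is not a proof. A swap at one pair can destroy disjointness with the other paths, or cascade mismatches elsewhere, and there is no evident monovariant here. In the existing tournament-linkage literature (e.g.\ the proofs behind the results of Pokrovskiy, Meng--Rolek--Wang--Yu, Bang-Jensen--Johansen cited just above), the correct pairing is not obtained by post-hoc repair of a Menger output; rather, one builds a dedicated linkage structure---typically a short ordered list of high-degree ``hub'' vertices through which each $P_i$ is routed in a prescribed slot---so that the pairing is baked in from the construction. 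Your sketch is missing that structural ingredient, and the slack $50k+20l$ alone does not supply it.
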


Lemma \ref{lemma3} states that there exist small (not necessarily disjoint) vertex sets $U$ and $W$ in a digraph such that every vertex in a digraph can reach $U$ and can be reached by a vertex in $W$. Moreover, Proposition \ref{proposition1} guarantees that a digraph has a small vertex set $U$  such that every vertex outside $U$ has many out- and in-neighbors in $U$.

\begin{lemma}\label{lemma3}
\cite{Kang(2020)} Let $n,k,l\in \mathbb{N}$ and let $D$ be a digraph of order $n$ with $\delta(D) \geq n -l$. Then there exist two sets $U, W \subseteq V (D)$ satisfying the following.

\noindent (i) $|U|,|W|\leq 2k+l-2$.

\noindent (ii) For every $v\in V(D)$, both $(v, U)$ and $(W, v)$ are $k$-connected in $D$.
\end{lemma}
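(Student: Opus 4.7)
By arc-reversal symmetry, it suffices to construct the set $U$: applying the same argument to the reverse digraph $D^{\ast}$ (which has $\delta(D^{\ast}) = \delta(D) \geq n - l$) yields a set $U^{\ast}$ for $D^{\ast}$, and taking $W := U^{\ast}$ gives the desired set for $D$. So my plan focuses entirely on constructing $U$ with $|U| \leq 2k + l - 2$ such that $(v, U)$ is $k$-connected in $D$ for every $v \in V(D)$.

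The construction I would use is a minimality argument. Let $U$ be a minimum-cardinality set with the property that $(v, U)$ is $k$-connected for all $v \in V(D)$. The base case $k = 1$ illustrates the role of the density: the condition becomes that every $v$ has a directed path to $U$, equivalently $U$ meets every sink strongly connected component of $D$. If $C_1, \ldots, C_s$ are distinct sink SCCs, then for $v \in C_1$ the set $C_2 \cup \cdots \cup C_s$ consists entirely of non-neighbors of $v$; since $|N^+(v) \cup N^-(v)| \geq n - l$, this forces $s - 1 \leq l - 1$, i.e.\ $s \leq l$. Selecting one vertex per sink SCC gives $|U| \leq l = 2 \cdot 1 + l - 2$.

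For the main case, I would exploit the minimality of $U$. Each $u \in U$ is essential, so there exist a vertex $v_u$ and a cut $S_u$ of size at most $k - 1$ with $v_u, u \notin S_u$ such that the set $R_u$ of vertices reachable from $v_u$ in $D - S_u$ contains $u$ but $R_u \cap U \subseteq S_u \cup \{u\}$. By the definition of reachability, $R_u$ is closed under out-arcs in $D$, so writing $T_u := V(D) \setminus (R_u \cup S_u)$ there are no arcs from $R_u$ to $T_u$. Combined with $|N^+(r) \cup N^-(r)| \geq n - l$, this forces every $r \in R_u$ to have at most $l - 1$ non-neighbors in $T_u$, so $T_u$ almost in-dominates $R_u$.

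Assuming for contradiction $|U| \geq 2k + l - 1$, the partition $U = (U \cap R_u) \sqcup (U \cap S_u) \sqcup (U \cap T_u)$ together with $|U \cap R_u| \leq |S_u| + 1 \leq k$ and $|U \cap S_u| \leq k - 1$ yields $|U \cap T_u| \geq l$. The main obstacle, and the technically delicate step, is converting this abundance into a contradiction. My plan is a Menger-based counting: the $k$ internally disjoint paths from $v_u$ to $U$ must have their endpoints in $T_u \cap U$ routed through the cut $S_u$ (the only escape from $R_u$), consuming distinct internal vertices of $S_u$. Combining this constraint across all $u \in U$, together with the density-driven almost-in-domination of $R_u$ by $T_u$, one expects to identify some $u^{\prime} \in U$ whose deletion preserves the $k$-connectivity condition, contradicting the minimality of $U$ and completing the bound $|U| \leq 2k + l - 2$.
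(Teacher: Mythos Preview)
The paper does not supply its own proof of this lemma; it is quoted from Kang and Kim \cite{Kang(2020)} and used as a black box, so there is no in-paper argument to compare your proposal against.

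Assessing the proposal on its own: the symmetry reduction and the $k=1$ case are correct, and the minimality setup is sound. In fact, since $R_u\subseteq V(D)\setminus S_u$ you have $R_u\cap S_u=\emptyset$, so $R_u\cap U=\{u\}$ exactly; your bound $|U\cap R_u|\le k$ is slack and you actually obtain $|U\cap T_u|\ge |U|-k\ge k+l-1$. A small slip: since no arc leaves $R_u$ towards $T_u$, every $r\in R_u$ has only in-neighbours or non-neighbours in $T_u$; in the paper's terminology this says $T_u$ \emph{out}-dominates $R_u$, not in-dominates.

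The real problem is the final paragraph, which you yourself flag as ``the main obstacle'' and then do not resolve. The Menger sketch does not bite: the witness $(v_u,S_u)$ says precisely that the only vertex of $U\setminus S_u$ reachable from $v_u$ in $D-S_u$ is $u$, so the $k$ internally disjoint $v_u$--$U$ paths may all terminate at $u$ and lie entirely inside $R_u$, with none forced through $S_u$ to $T_u\cap U$. Your proposed counting can therefore be vacuous, and nothing in the outline explains how to combine the local data $(R_u,S_u,T_u)$ across different $u\in U$ to single out a removable vertex. The phrase ``one expects to identify some $u'$'' is where the proof has to happen, and it does not; as written this is a genuine gap, not a routine omission.
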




\begin{proposition}\label{proposition1}
Let $n, k, l \in \mathbb{N}$ and let $D$ be a digraph of order $n$ with $\delta(D) \geq n-l$. Then there is a set  $U\subseteq V (D)$ with $|U| \leq 3(k+l) \log n$ such that each vertex in $V(D)\setminus U$
has at least $k$ out-neighbors and at least $k$ in-neighbors in $U$.
\end{proposition}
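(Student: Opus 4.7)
The plan is to construct $U$ as a union $U^+ \cup U^-$, where $U^+$ supplies the out-neighbor condition and $U^-$ (obtained by applying the same argument after reversing every arc of $D$) supplies the in-neighbor condition. For trivial regimes in which $n \leq \tfrac{3}{2}(k+l)\log n$, I would simply take $U := V(D)$. Otherwise it suffices to construct $U^+ \subseteq V(D)$ of size at most $\tfrac{3}{2}(k+l)\log n$ so that each $v \in V(D) \setminus U^+$ has at least $k$ out-neighbors in $U^+$.

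I would build $U^+$ as a disjoint union $D_1 \cup \cdots \cup D_k$ of $k$ \emph{layers}, where $D_i$ is a $1$-out-dominating set of the induced subdigraph on $V_i := V(D) \setminus (D_1 \cup \cdots \cup D_{i-1})$. The key structural observation is that $\delta(D[V']) \geq |V'| - l$ is inherited by every induced subdigraph $D[V']$, since for each $v \in V'$ we have $|(N_D^+(v) \cup N_D^-(v)) \cap V'| \geq |V'| - l$. Within each layer I would apply an Erd\H{o}s--Moser-style halving: starting from $V' := V_i$, iteratively pick a vertex $u \in V'$ with $d_{D[V']}^-(u) \geq (|V'|-l)/2$ (which exists by averaging, since $|A(D[V'])| \geq |V'|(|V'|-l)/2$), add $u$ to $D_i$, and replace $V'$ by $V' \setminus (\{u\} \cup N_{D[V']}^-(u))$. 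Each step shrinks $|V'|$ from $m$ to at most $(m+l)/2$, so after $O(\log(n/l))$ iterations the residual drops below $2l$ vertices, which I then add to $D_i$ wholesale, giving $|D_i| \leq \log(n/l) + 2l$. Since the $D_i$'s are pairwise disjoint and any $v \notin U^+$ lies in every $V_i$, the $1$-out-domination property of each $D_i$ supplies $v$ with one out-neighbor in $D_i$, and disjointness ensures these $k$ out-neighbors are distinct.

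The main obstacle will be pinning down the precise constant in $3(k+l)\log n$: the naive tally $|U^+| \leq k(\log(n/l)+2l)$, together with the symmetric bound for $U^-$, yields a total of the desired order $O((k+l)\log n)$, but with a constant that can exceed $3$ when $l$ is comparable to $\log n$. To tighten this I would first absorb the low-degree vertices $\{v : d^+(v) < 2k \text{ or } d^-(v) < 2k\}$ into $U$ using the double-counting bound $|\{v : d^+(v) < T\}| \leq l + 2T$ (obtained by comparing arcs entering and leaving this set through the inequality $d^-(v) \geq n-l-T$), and then sharpen the halving analysis across layers by exploiting that successive $V_i$'s are strictly smaller, so that the $2l$ base-case cost is effectively paid only once rather than $k$ times.
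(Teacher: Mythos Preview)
Your layered-halving framework is on the right track, but the bound it actually delivers is $2k(\lceil\log(n/l)\rceil + 2l)$, and this is \emph{not} $O((k+l)\log n)$ in general: when $k$ and $l$ are both large relative to $\log n$ (say $k = l = \sqrt{n}$), your bound is of order $kl$, which dwarfs the target $3(k+l)\log n$. The culprit is the $2l$ residual you absorb into every one of the $k$ layers. Your proposed Fix~2 (``pay $2l$ only once'') does not work as stated: the residual set of a layer is determined by the greedy choices in that layer and has no structural tie to non-adjacency, so a single vertex can land in the residual of \emph{every} layer, and you cannot simply merge the residuals without destroying the domination property. Fix~1 (absorbing low-degree vertices) is correct as a lemma but does nothing to eliminate the $2kl$ term.

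The paper sidesteps the residual problem entirely by a different halving rule: it greedily picks a vertex $v$ of minimum \emph{sole out-degree} $d^{+!}(v) = |\{u : vu \in A(D),\, uv \notin A(D)\}|$ and recurses on $N^{+!}(v)$. Since sole out- and in-degrees average to at most $(m-1)/2$ in any $m$-vertex digraph, this gives an exact halving with no $+l$ slack, so each layer has size at most $\lceil\log n\rceil$. The price is that these layers are only \emph{almost} in-dominating: a vertex $u$ that exits the nested sets at step $j$ either has $uv_j \in A(D)$ or is non-adjacent to $v_j$. Here is where the trade happens: instead of $k$ layers, the paper builds $k+l$ disjoint layers; since $u$ has at most $l-1$ non-adjacent vertices in total and the layers are disjoint, $u$ fails on at most $l-1$ layers and therefore finds an out-neighbor in at least $k+1$ of them. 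Repeating for the out-dominating side gives $|U| \leq 2(k+l)\lceil\log n\rceil \leq 3(k+l)\log n$. The key idea you are missing is this swap: replace ``$k$ layers of genuine dominating sets of size $\log(n/l)+2l$'' by ``$k+l$ layers of almost-dominating sets of size $\log n$, where failure is witnessed by non-adjacency.''
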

\begin{proof}
First, we construct an almost in-dominating set $V_1$ of order at most $c=\lceil \log n \rceil\leq (3\log  n)/2$. Roughly speaking, we will find $V_1$ by choosing vertices $v_1,\ldots ,v_i$ such that $v_i\in \bigcap\limits_{j\in [i-1]} N_D^{+!}(v_j)$ with  minimum sole out-degree in $\bigcap\limits_{j\in [i-1]} N_D^{+!}(v_j)$ at each step for $i\in [c]$. We first choose $v_{1} \in V(D)$ with minimum sole out-degree in $D$. Set $A_1: = \{v_1\}$ and  $E_{1}:= N^{+!}_{D}(v_1)$. Obviously, $A_1$ is an almost in-dominating set. Hence  $|E_{1}|\leq \lceil n / 2 \rceil$. Then we choose vertices $v_2,\ldots ,v_i$ such that the sole out-degree of $v_i$ in $E_{i-1}$ is minimum at each step. Set $A_i: = A_{i-1} \cup \{v_i\}$ and  $E_{i}:=E_{i-1}\cap N^{+!}_{D}(v_j)$. Then  $A_i$ is an almost in-dominating set.  This yields $|E_{i}|\leq \lceil n/ 2^{i-1} \rceil$. Let $r$ be an integer such that $2^r \leq n <2^{r+1}$. Note that if $i$ is larger than $r+1$, then $E_i$ is empty and it not hard to see $r+1\geq \lceil \log n \rceil$. Therefore, let $V_1=A_{r+1}$ be the desired almost in-dominating set.

 We now apply this argument again, with $V(D)$ replaced by $V(D)\setminus (V_1\cup \cdots \cup V_j)$ with $j\in [k+l-1]$, to obtain $k+l$ disjoint almost in-dominating sets $V_1,\ldots,V_{k+l}$ with $|V_i|\leq c$ for all $i\in [k+l]$. Proceed similarly to obtain disjoint almost out-dominating sets $V_1^\prime,\ldots,V_{k+l}^\prime$ with $|V_i^\prime|\leq c$ for all $i\in [k+l]$ in $D - \bigcup_{i\in[k+l]}V_i$. Take $U:=V_1 \cup \cdots \cup V_{k+l}\cup V_1^\prime \cup \cdots \cup V_{k+l}^\prime$ as required.
\end{proof}

\section{Proof of Theorem \ref{theorem1}}

\textbf{Outline of the proof.} The idea of the proof of Theorem \ref{theorem1} goes back at least as far as \cite{Kim(2016)}. Our proof involves looking at constructing small disjoint out-dominating sets $A_i\setminus \{a_i\}$, where $i\in [6k]$, such that each $D[A_i]$ has a spanning transitive tournament with head $x_i$ and tail $a_i$. Similarly, we also need to construct small disjoint in-dominating sets $B_i\setminus \{b_i\}$, where $i\in [6k]$, so that each $D[B_i]$ has a spanning transitive tournament with head $b_i$ and tail $y_i$. Because $D$ is highly connected, provided that all these dominating sets are small enough, then we can use Proposition \ref{proposition2} (i) to find $6k$ disjoint paths $P_i$ joining the head $b_i$ of $B_i$ to the tail $a_i$ of $A_i$.

We see that if
\begin{equation}\label{18}
  \begin{aligned}
  &\bigcup\limits_ {i\in[k]} (A_i \cup B_i \cup V(P_i)) \cup \{a_{2k+1},b_{2k+1},b_{4k+1},a_{5k+1}\}\subseteq V_1, \bigcup\limits_ {i\in[k+1,2k]} (A_i\cup B_i \cup V(P_i))\\
  &\cup \{a_{3k+1},b_{3k+1},a_{4k+1},b_{5k+1}\}\subseteq  V_2 \text{ and } \bigcup\limits_ {i\in[2k+1,6k]} (A_i\cup B_i \cup V(P_i))\subseteq D[V_1,V_2],
  \end{aligned}
\end{equation}
then for any set $S$ with $|S|\leq k-1$, we can choose an index $i\in [6k]$ such that $(A_i\cup B_i \cup V(P_i))\cap S=\emptyset$. This implies that if $x, y \in V_1\setminus S$, then we can find a path
from $x$ to $y$ in $D[V_1\setminus S]$ by utilizing $P_i$. Hence $D[V_1]$ is strongly $k$-connected. In the same way, both $D[V_2]$ and  $D[V_1,V_2]$  are strongly $k$-connected.

However, we cannot hope for this ideal case, due to several issues. The following are some major issues complicating the proof.

$\bullet$  As out- or in-dominating sets may be large in general,  we can not use Proposition \ref{proposition2} (i) to get paths $P_1,\ldots,P_{6k}$.  Hence we will use Lemmas \ref{lemma1} and  \ref{lemma2} to construct small ``almost out-dominating sets" $A_i$ and ``almost in-dominating sets" $B_i$ in Subsection 3.1. This means that there is a small ``exceptional set'' of vertices that are not out-dominated (resp. in-dominated) by $A_i$ (resp. $B_i$). That is, a vertex in an ``exceptional set'' can not receive an arc from $A_i$ or send an arc to $B_i$. Instead of receiving/sending  an arc from $A_i$/to $B_i$ , we will use the notion of ``safe" vertices (see Definition \ref{definition1}) to find a path from  $A_i$ to a vertex in an ``exceptional set'' or from a vertex in an ``exceptional set'' to $B_i$.

$\bullet$ Owing to (\ref{18}), we obtain $P_{2k+1}\cup \cdots \cup P_{6k+1} \subseteq D[V_1,V_2]$. This implies that the length of $P_i$ is odd if $a_i$ and $b_i$ belong to the same partite. Otherwise, the length of $P_i$ is even. Hence we will use the notion of ``correct" path (see Definition \ref{definition2}) to make sure that the length of this path is even or odd. 

In the proof of Theorem \ref{theorem1}, we will construct the sets $V_1,V_2$ in several steps by coloring the vertices of $V(D)$ with colors $I$ and $II$. At
each step and for $i\in \{I,II\}$, let $V_{i}=\{v\in V(D): v \text{ is colored by } i\}$, and meanwhile we guarantee the safety of all colored vertices at this step.  Finally, let $V_1:=V_I$ and $V_{2}:=V_{II}$.
 The construction and coloring mainly consist of four steps:

$(a)$ Constructing $6k$  almost out-dominating sets $A_1,\ldots, A_{6k}$ and $6k$ almost in-dominating sets $B_1,\ldots, B_{6k}$ satisfy (P0)-(P4).  Then we can color all vertices in $A_1\cup \cdots\cup  A_{6k}\cup B_1\cdots \cup B_{6k}$ and some other vertices such that all colored vertices are safe (see Claim \ref{claim2}). Let $C_1$  be a set consisting of all vertices colored so far.

$(b)$  Finding correct $i$-paths for each $i\in [6k]$, coloring all vertices in correct $i$-paths and some other vertices such that all colored vertices are safe (see Claims \ref{claim3}-\ref{claim10}).  Let $C_5$ be the set consisting of all vertices colored so far. So $C_1\subseteq C_5$.

$(c)$  Coloring all remaining vertices such that all vertices are safe (see Claim \ref{claim11}). Let $C_6$ be the set consisting of all vertices colored so far. So $C_5\subseteq C_6$.

$(d)$  Constructing $V_1$ and $V_2$ (see Claim \ref{f}).


\subsection{Construction of almost dominating sets}

 Let $D$ be a strongly $10^7k(k+l)^2\log(2kl)$-connected digraph of order $n$ with $\delta(D)\geq n-l$. Let $X=\{x_1,x_2,\ldots,x_{6k}\}$ be a set of $6k$ vertices with smallest in-degrees in $D$, and  $Y=\{y_1,y_2,\ldots,y_{6k}\}$ be a set of $6k$ vertices with smallest out-degrees in $V(D)\setminus X$. Define
\begin{equation*}
\hat{\delta}^-(D)=\mathop{\text{min}}\limits_{v\in V(D)\setminus X}d_{D}^-(v)\ \ \ \  \text{ and }\ \ \ \  \hat{\delta}^+(D)=\mathop{\text{min}}\limits_{v\in V(D)\setminus Y}d_{D}^+(v).
\end{equation*}
Since $D$ is strongly $10^7k(k+l)^2\log(2kl)$-connected, it follows that
\begin{equation}\label{1}
 \text{min}\{n,\hat{\delta}^-(D), \hat{\delta}^+(D)\}\geq 10^7k(k+l)^2\log(2kl).
\end{equation}

Let $c=\lceil \text{log}(18000k^2)\rceil+2\leq 20k$. We first repeatedly apply Lemma  \ref{lemma1} (removing the dominating sets obtained already each time) with parameter $c$ and a vertex in $X$ to obtain almost out-dominating sets $A_1,\ldots, A_{6k}$ and sets of vertices $E_{A_1},\ldots, E_{A_{6k}}$. Proceed similarly (apply Lemma  \ref{lemma2} with parameter $c$ and a vertex in $Y$ by removing the dominating sets each time) to obtain disjoint almost in-dominating sets $B_1,\ldots, B_{6k}$, and sets of vertices $E_{B_1},\ldots, E_{B_{6k}}$. Write $D_0= \bigcup_{i\in[6k]}(A_i\cup B_i)$. We have the following statements.

(P0) The sets $A_1,\ldots, A_{6k}, B_1,\ldots, B_{6k}$ are pairwise disjoint. Moreover, $2 \leq |A_i|, |B_i|\leq c$.

 (P1) $D[A_i]$ has a spanning transitive tournament with head $x_i$ and tail $a_i$, and $D[B_i]$ has a spanning transitive tournament with tail
$y_i$ and head $b_i$.

 (P3) $A_i\setminus \{a_i\}$ out-dominates $V(D)\setminus (D_0\cup E_{A_i})$, and $B_i\setminus  \{b_i\}$ in-dominates $V(D)\setminus (D_0\cup E_{B_i})$.

(P4) $|E_{A_i}|\leq (1/2)^{c-2}\hat{\delta}^-(D)+(c-1)(l-1)$ and  $|E_{B_i}|\leq (1/2)^{c-2}\hat{\delta}^+(D)+(c-1)(l-1)$.

By symmetry, assume $\hat{\delta}^+(D)\geq \hat{\delta}^-(D)$. Let $E_A=\bigcup_{i\in[6k]}E_{A_i}$, $E_B=\bigcup_{i\in[6k]}E_{B_i}$ and $E=E_A \cup E_B$. It follows from (1) and (P4) that
\begin{equation}\label{2}
\begin{aligned}
  |E_A|&\leq 6k\cdot (\frac{\hat{\delta}^-(D)}{ 18000k^2}+20kl)\leq \frac{\hat{\delta}^-(D)}{3\cdot10^3k}+120k^2l \leq \frac{\hat{\delta}^-(D)}{ 2\cdot 10^3k},\\
   &|E_B|\leq \frac{\hat{\delta}^+(D)}{ 2\cdot 10^3k} \text{ and } |E|\leq |E_A|+|E_B|\leq \frac{\hat{\delta}^+(D)}{ 1000k}.
  \end{aligned}
  \end{equation}

Next we will construct the sets $V_1,V_2$ in several steps. We will color the vertices of $V(D)$ with colors $I$ and $II$. At
each step, we use $V_{I}$ and $V_{II}$ to denote the set of vertices of colors $I$ and $II$, respectively. We start with no vertices of $D$ colored, and we now begin to color all vertices in $D_0$. In what follows, let the vertices in
\begin{equation*}
\begin{aligned}
D_0^\prime = &\bigcup\limits_ {i\in[k]} (A_i\cup B_i)\cup   \bigcup\limits_{i\in[2k+1,3k]}
\{a_i,b_i\}\cup \bigcup\limits_{i\in[3k+1,4k]}
(A_i\cup B_i\setminus \{a_i,b_i\})\\ &\cup \bigcup\limits_{i\in[4k+1,5k]}
(A_i\cup \{b_i\}\setminus \{a_i\}) \cup \bigcup\limits_{i\in[5k+1,6k]}
(B_i\cup \{a_i\}\setminus \{b_i\})
\end{aligned}
\end{equation*}
  be colored with $I$ and all vertices in $D_0\setminus D_0^\prime$ be colored with $II$ (see Fig. \ref{fig1}).
\begin{figure}[H]
\centering    
    \includegraphics[scale=0.65]{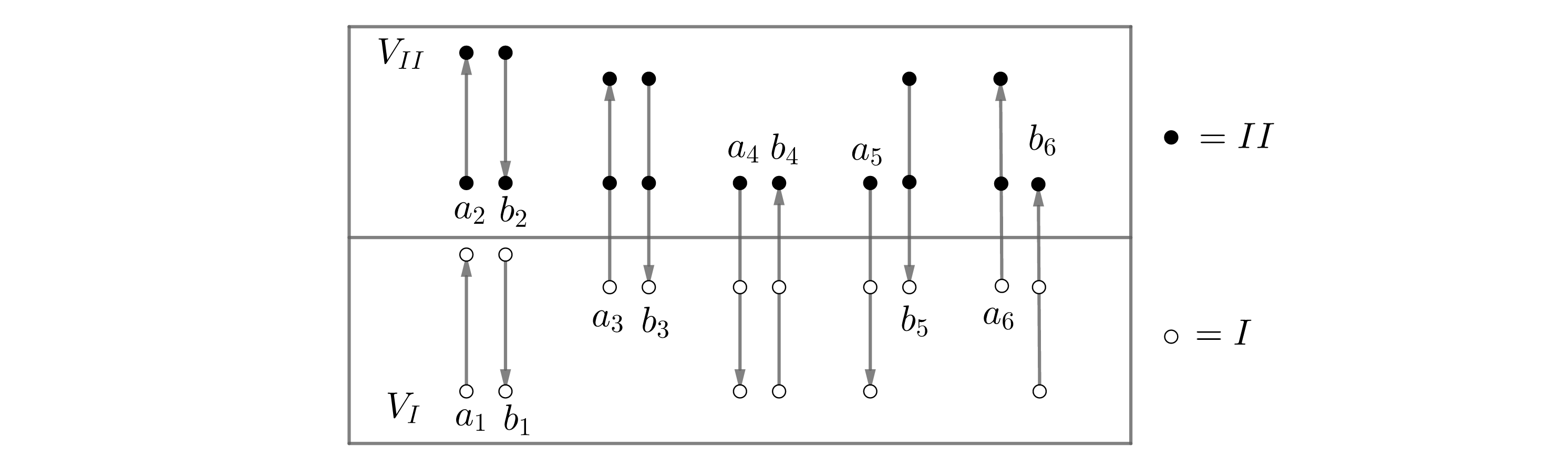}   
\caption{ Color all vertices in $D_0$ in the case when $k = 1$.} 
\label{fig1}  
\end{figure}

To solve the problem discussed earlier we will introduce the notion of ``safe" vertices.

 \begin{definition}\label{definition1}
(Safe vertex) For each $i\in \{I, II\}$, we call a vertex $v$ in $V_i$ safe if

\noindent (s1) $(v,V(D)\setminus (D_0\cup E_B))$ is $k$-connected in $D [V_i]$,

\noindent (s2) $(V(D)\setminus (D_0\cup E_A), v)$ is $k$-connected in $D [V_i]$,

\noindent (s3) $(v,V(D)\setminus (D_0\cup E_B))$ is $k$-connected in $D [V_I,V_{II}]$, and

\noindent (s4) $(V(D)\setminus (D_0\cup E_A), v)$ is $k$-connected in $D [V_I,V_{II}]$.
\end{definition}

 The following useful property can be proved by the definition of safe, so we omit the proof.

\begin{property}\label{lemma4}
For each $\{i,j\}= \{I,II\}$, the following statements hold.

\noindent (i) All colored vertices in $V(D)\setminus (D_0\cup E)$ are safe.

\noindent (ii) All colored vertices in $V(D)\setminus (D_0\cup E_B)$ satisfy (s1) and (s3); All  colored vertices in $V(D)\setminus (D_0\cup E_A)$ satisfy (s2) and (s4).

\noindent (iii) If $v\in V_i$ has at least $k$ out-neighbors of color $i$ satisfying (s1), then $v$
satisfies (s1); The analogue holds if $v$ has at least $k$ in-neighbors of color $i$ satisfying (s2).

\noindent (iv) If $v\in V_i$ has at least $k$ out-neighbors of color $j$ satisfying (s3), then $v$  satisfies (s3); The analogue holds if $v$ has at least $k$ in-neighbors of color $j$ satisfying (s4).
\end{property}

The following claim, which will be used frequently in the rest of the proof, states that to ensure the safety of the colored vertices at some step, we need to prepare a small set $C^\prime$ and a coloring of the vertices in $C^\prime$ such that all colored vertices are safe. In what follows, let $f(k,l)=k(k+l)\log(2kl)$.

\begin{claim}\label{claim1}
Let $l^\prime \in \mathbb{N}$ with $l^\prime\leq 8\cdot 10^3f(k,l)$. Let $C$ be a set of at most $3\cdot 10^4(k+l)f(k,l)$ vertices in $V(D)$  such that $X,Y\subseteq C$. Suppose that $W_I,W_{II}\subseteq V(D)\setminus C$ are the colored vertex sets of colors $I$ and $II$, respectively, and $|W_I|,|W_{II}|\leq l^\prime$. Then there is a set $C^\prime\subseteq V (D )\setminus (C \cup W_I\cup W_{II})$ and a coloring of the vertices
in $C^\prime$ such that every vertex in $C^\prime \cup W_I\cup W_{II}$ is safe and $|C^\prime|\leq 2kl^\prime+400f(k,l)$.
\end{claim}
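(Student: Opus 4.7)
The plan is to construct $C'$ as the union of a generic \emph{backbone} $U$ of size at most $400f(k,l)$ plus small sets of \emph{personal helpers} for those vertices of $W_I\cup W_{II}$ that the backbone does not fully cover. Throughout I would keep $C'$ disjoint from $D_0\cup E$, so that Property~\ref{lemma4}(i) makes every colored vertex of $C'$ automatically safe. The goal for each $v\in W_I\cup W_{II}$ is to supply the four certificates of Property~\ref{lemma4}(iii)--(iv): at least $k$ in-neighbors and at least $k$ out-neighbors of each color satisfying the corresponding condition among (s1)--(s4) inside its colored neighborhood.

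For the backbone, I would apply Proposition~\ref{proposition1} (with $k$ replaced by $2k$) to the induced subgraph $D'=D-(C\cup W_I\cup W_{II}\cup D_0\cup E)$; (\ref{2}) together with $\delta(D)\geq n-l$ ensures $\delta(D')\geq |V(D')|-l$ still holds. The output is a set $U\subseteq V(D')$, disjoint from $D_0\cup E$, such that every $v\in W_I\cup W_{II}\subseteq V(D)\setminus V(D')$ has at least $2k$ in-neighbors and at least $2k$ out-neighbors in $U$. I would then split $U=U_I\sqcup U_{II}$ in a balanced way so that every $v\in W_I\cup W_{II}$ retains $\geq k$ in-neighbors and $\geq k$ out-neighbors in each of $U_I$ and $U_{II}$; this is achievable via a random $50$-$50$ split combined with Chernoff bounds and a union bound over the at most $2l'$ target vertices, since each has $\geq 2k$ in- and $\geq 2k$ out-neighbors in $U$. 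Color $U_i$ with $i$.

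For the helpers, any $v\in W_I\cup W_{II}$ still missing one of its four certificates---only vertices of $E_A\cup E_B$ can fail, since by Property~\ref{lemma4}(ii) the backbone vertices automatically satisfy every relevant (s*)---receives up to $k$ appropriately colored in- or out-neighbors greedily selected from $V(D)\setminus(C\cup W_I\cup W_{II}\cup D_0\cup E\cup C')$. By (\ref{1}) we have $d^{\pm}(v)\geq\hat{\delta}^{\pm}(D)\geq 10^{7}k(k+l)^{2}\log(2kl)$, which dwarfs the size of the forbidden set, so such helpers always exist. They contribute at most $k\cdot(|W_I|+|W_{II}|)\leq 2kl'$ vertices, giving $|C'|\leq 2kl'+400f(k,l)$. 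Property~\ref{lemma4}(iii)--(iv) then certifies safety of every $v\in W_I\cup W_{II}$, while Property~\ref{lemma4}(i) handles $C'$ itself.

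The chief obstacle is the size control of $U$: Proposition~\ref{proposition1} applied as stated yields $|U|=O((k+l)\log n)$, whereas we need $|U|\leq 400f(k,l)=O(k(k+l)\log(2kl))$, and $\log n$ is not a priori bounded by $k\log(2kl)$. The remedy I would develop is to run the iterative almost-dominating-set construction inside the proof of Proposition~\ref{proposition1} in a targeted fashion, demanding the neighborhood property only for the $\leq 2l'$ vertices in $W_I\cup W_{II}$ rather than for all of $V(D)$, and terminating once the exceptional set shrinks below $2l'$; the few target vertices still inside the exceptional set are then absorbed into the helper step above. Tuning the constants so that the backbone weighs at most $400f(k,l)$ while the helpers weigh at most $2kl'$ is the central bookkeeping effort.
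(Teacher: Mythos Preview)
Your approach has two concrete gaps. First, applying Proposition~\ref{proposition1} to $D' = D - (C \cup W_I \cup W_{II} \cup D_0 \cup E)$ yields a set $U$ such that every vertex \emph{of $V(D') \setminus U$} has $\geq 2k$ in- and out-neighbors in $U$; it says nothing about the vertices of $W_I \cup W_{II}$, which lie outside $V(D')$ by construction. Your ``targeted'' variant is in the right spirit but is not the statement of the proposition and would require its own argument. Second, the random 50--50 split cannot be justified by Chernoff when $k$ is small: a vertex with exactly $2k$ neighbors in $U$ has expectation $k$ in each half, and the failure probability is only $e^{-\Theta(k)}$, which does not beat a union bound over $\Theta(l')$ targets when $l' = \Theta(f(k,l))$ and $k$ is a fixed constant. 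A further oversight is that vertices of $W_I \cup W_{II}$ lying in $D_0$ (not just in $E$) also escape Property~\ref{lemma4}(i)--(ii), and in the very first application of the claim (Claim~\ref{claim2}) most of $W_I \cup W_{II}$ lies in $D_0$; so your helper budget of $k\cdot(|W_I|+|W_{II}|)$ is not obviously enough once four certificates per vertex are needed.

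The paper sidesteps all of this by exploiting the \emph{internal} connectivity of the colored sets rather than building a global dominating backbone. It applies Lemma~\ref{lemma3} to $D_1 = D[W_I, W_{II}]$ (which satisfies $\delta(D_1) \geq |D_1| - (l+l')$) to obtain head and tail sets $D_1^h, D_1^t$ of size at most $2k + l + l' - 2$ such that every $v \in V(D_1)$ is $k$-connected to $D_1^h$ and from $D_1^t$ \emph{inside} $D_1$. Only these $O(k+l+l')$ boundary vertices then need $k$ greedily chosen external helpers each, and Property~\ref{lemma4}(iii)--(iv) propagates (s3)--(s4) from the boundary to all of $W_I \cup W_{II}$; the term $2kl'$ arises precisely as $2k$ times the head/tail size. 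A second, analogous pass inside $D[W_I']$ and $D[W_{II}']$ handles (s1)--(s2), and a short recoloring trick (Proposition~\ref{proposition1} applied to the small helper sets themselves, followed by one more round of greedy helpers) repairs the few in-helpers that may land in $E_B$; these corrections are what the $400 f(k,l)$ budget absorbs. The key idea you are missing is this reduction to a small boundary via Lemma~\ref{lemma3}, which makes the bookkeeping deterministic and tight.
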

\begin{proof}
To this end, we divide the proof into the following two steps. In Step 1, we color some vertices in $V (D)\setminus (C \cup W_I\cup W_{II})$ such that all colored vertices satisfy (s3) and (s4). In Step 2, we continue to color some vertices such that all colored vertices are safe.

\textbf{Step 1.} There exists a vertex set $C_1^\prime\subseteq V (D)\setminus (C \cup W_I\cup W_{II})$ of size at most
$$2k(2k+l+l^\prime-2)+100(k+l)\log(2kl)+200f(k,l)$$
and a coloring of the vertices
in $C_1^\prime$ such that all vertices in $C_1^\prime \cup W_I\cup W_{II}$ satisfy (s3) and (s4).

Let $D_1=D[W_I, W_{II}]$ and $l_0=l+l^\prime$. Since $|W_I|,|W_{II}|\leq l^\prime$ and $\delta(D)\geq n-l$, it follows that $\delta(D_1)\geq |D_1|-l_0$. Let $E^\prime= C\cup D_1 \cup D_0 \cup E$ and $E_A^\prime= C\cup D_1\cup D_0\cup E_A $. According to  (\ref{1}), (\ref{2}) and the upper bound of $|C|$, we obtain
\begin{equation}\label{3}
\begin{aligned}
\hat{\delta}^+(D)- |E^\prime|\geq  7\cdot 10^6(k+l)f(k,l) \text{ and } \hat{\delta}^-(D)- |E_A^\prime|\geq 7\cdot 10^6(k+l)f(k,l).
\end{aligned}
\end{equation}
By applying Lemma \ref{lemma3} to $D_1$, there are two sets $D_1^{t}, D_1^{h}\subseteq D_1$ such that
\begin{equation}\label{5}
|D_1^{t}|,|D_1^{h}|\leq 2k+l_0-2
\text{ and for  each } v\in D_1,  (v, D_1^{h}) \text{ and } (D_1^{t}, v) \text{ are } k\text{-connected in } D_1.
\end{equation}
To finish Step 1, we will find vertex sets $Q^\prime, Q, Q_1, Q_2$, and add vertex sets $Q^\prime, Q, Q_1, Q_2$ in turn into $D_1$ so that all colored vertices satisfy (s3) and (s4). So $C_1^\prime=Q^\prime\cup Q\cup  Q_1\cup Q_2$.

\textbf{Step 1.1.} Eq.(\ref{3}) implies that for each $u\in D_1^{h}$, we may greedily choose a set $Q^\prime_u \subseteq N^+_D(u)\setminus E^\prime$ with $|Q^\prime_u|=k$ and $Q^\prime_u\cap Q^\prime_{u^\prime}=\emptyset$ for any $u^\prime\in D_1^{h}\setminus \{u\}$. Then color the vertices in $Q^\prime_u$ the different color from $u$. Let $Q^\prime=\bigcup_{u\in D_1^{h}} Q^\prime_u$. This yields that for each $u\in D_1^{h}$, the pair $(u, Q^\prime)$ is $k$-connected in $D[V_I,V_{II}]$. By (\ref{5}), we have that $|Q^\prime|\leq k(2k+l_0-2)$ and that $(v, Q^\prime)$ is $k$-connected in $D[V_I,V_{II}]$ for each $v\in D_1$. Recall that $Q^\prime \cap (D_0\cup E)=\emptyset$, Property \ref{lemma4} (i) and (iii) yields that all vertices in $Q^\prime$ are safe and all vertices in $D_1$ satisfy (s3).

\textbf{Step 1.2.} Note that $|Q^\prime|\leq k(2k+l_0-2)$, in the same way as Step 1.1, we may also greedily choose a set $Q$ outsides $D_0\cup E_A^\prime \cup Q^\prime$ with $|Q|\leq k(2k+l_0-2)$ such that each vertex  in $D_1^{t}$ has $k$ different color in-neighbors in $Q$, and all colored vertices in $D_1\cup Q$ satisfy (s4).

\begin{figure}[H]
\centering    
    \includegraphics[scale=0.55]{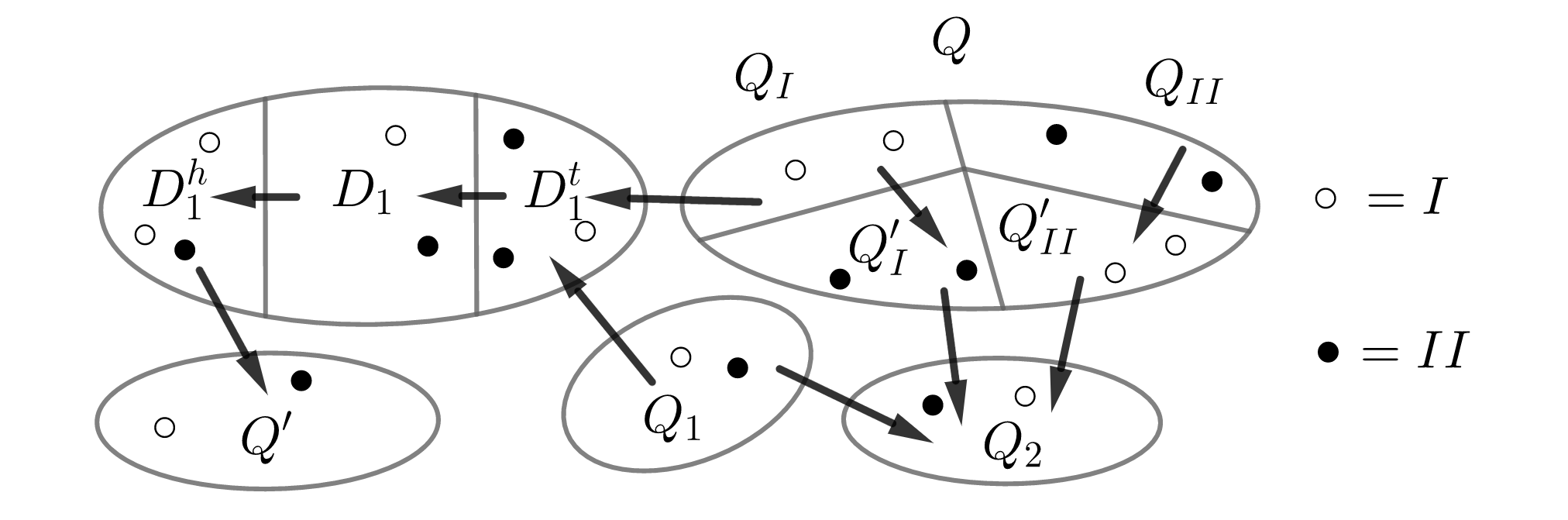}   
\caption{ The structure  of $D[W_I\cup W_{II}]$ at Step 1 in the proof of Claim \ref{claim1}.} 
\label{fig2}  
\end{figure}

\vspace{-3mm}
Combining this with Step 1.1, it remains to color some uncolored vertices to ensure that all colored vertices in $Q$ satisfy (s3). For each $i\in\{I,II\}$, let $Q_i$ denote the set of vertices colored $i$ in $Q$. Since $|Q_i|\leq |Q|\leq k(2k+l_0-2)$, applying Proposition \ref{proposition1} to $D[Q_i]$ to find a set $Q_i^\prime\subseteq  Q_i$ with
$|Q_i^\prime|\leq 3(k+l)\log |Q_i| \leq 50(k+l) \log(2kl)$ such that every vertex in $Q_i\setminus Q_i^\prime$ has at
least $k$ out-neighbors in $Q_i^\prime$. Then we change the color of the vertices in $Q_i^\prime$ to color $j$, where $\{i,j\}=\{I,II\}$ (see Fig. \ref{fig2}). This results in that some vertices in $D_1$ may not satisfy (s4), since some vertex $u\in D_1^t$ may have less than $k$ in-neighbors with different color from $u$ in $Q$.

However, according to (\ref{3}) and $|Q\cup Q^\prime|\leq 2k(2k+l_0-2)\leq 5\cdot 10^3(k+l)f(k,l)$, we can still greedily choose a set $Q_1\subseteq V(D)\setminus (Q^\prime\cup Q \cup E_A^\prime)$ with $|Q_1|\leq 100(k+l)
\log(2kl)$, and then color the vertices in $Q_1$ such that each vertex $u\in D_1^{t}$ has $k$ in-neighbors with different color from $u$. Since $Q_1\cap  E_A^\prime=\emptyset$, Property \ref{lemma4} (ii) and (iv) show that all vertices in $D_1\cup Q_1$ satisfy (s4). Therefore, it suffices to  guarantee that the vertices in $Q_1\cup Q$ satisfy (s3).  Owing to Property \ref{lemma4} (iv), if the vertices in $Q_I^\prime\cup Q_{II}^\prime$ satisfy (s3), then the vertices in $Q$ satisfy (s3). So we just need to ensure that the vertices in $Q_I^\prime\cup Q_{II}^\prime\cup Q_1$ satisfy (s3).

By (\ref{3}) and $|Q\cup Q^\prime\cup Q_1|\leq 2k(2k+l_0-2)+100(k+l)\log(2kl)\leq 5\cdot 10^3(k+l)f(k,l)$ again, we may greedily choose a set $Q_2 \subseteq V(D)\setminus (Q\cup Q^\prime\cup Q_1\cup E^\prime)$ with $|Q_2|\leq 200f(k,l)$, and then color the vertices in $Q_2$ such that each vertex $v\in Q_I^\prime\cup Q_{II}^\prime\cup Q_1$ has $k$ distinct out-neighbors with different color from $v$ in $Q_2$. Due to Property \ref{lemma4} (i), all vertices in $Q_2$ are safe, and then all vertices in $Q_I^\prime\cup Q_{II}^\prime\cup Q_1$ satisfy (s3) from Property \ref{lemma4} (iv).

Now all colored vertices in $V(D_1)\cup Q^\prime\cup Q\cup  Q_1\cup Q_2$ satisfy (s3) and (s4). Let $C_1^\prime=Q^\prime\cup Q\cup  Q_1\cup Q_2$. Then
$|C_1^\prime|\leq 2k(2k+l_0-2)+100(k+l)\log(2kl)+200f(k,l)$.

\textbf{Step 2.}  There exists a vertex set $C_2^\prime\subseteq V (D)\setminus (C \cup W_I\cup W_{II}\cup C_1^\prime)$ of size at most $68f(k,l)$ and a coloring of the vertices in $C_2^\prime$ such that all vertices in $W_I\cup W_{II}\cup C_1^\prime \cup C_2^\prime$ are safe.

Let $W_I^\prime$ and $W_{II}^\prime$ denote the set of vertices in $W_I\cup W_{II}\cup C_1^\prime$ of colors $I$ and $II$, respectively.  Write $E^{\prime\prime}= C\cup E \cup W_I^\prime \cup W_{II}^\prime \cup D_0$ and $E_A^{\prime\prime}= C\cup E_A \cup W_I^\prime \cup W_{II}^\prime \cup D_0$. By  (\ref{1}), (\ref{2}) and Step 1, we obtain
\begin{equation}\label{9}
\begin{aligned}
\hat{\delta}^+(D)- |E^{\prime\prime}|\geq  6\cdot 10^6(k+l)f(k,l) \text{ and } \hat{\delta}^-(D)- |E_A^{\prime\prime}|\geq 6\cdot 10^6(k+l)f(k,l).
\end{aligned}
\end{equation}

Next we will find $W, U^\prime, U^{\prime\prime}, W^\prime$, and add vertex sets $W, U^\prime, U^{\prime\prime}, W^\prime$ in turn to $W_I^\prime$ so that all vertices in $W_I^\prime \cup W\cup U^\prime \cup U^{\prime\prime} \cup W^\prime$ are safe. Similarly, we can also find some  vertices of order the same as $|W\cup U^\prime \cup U^{\prime\prime} \cup W^\prime|$ to ensure the safety of all vertices in $W_{II}^\prime$. This implies $|C_2^{\prime}|=2|W\cup U^\prime \cup U^{\prime\prime} \cup W^\prime|$.


Applying Lemma \ref{lemma3} to $D[W_I^\prime]$, there is a set $W_I^{t}\subseteq W_I^\prime$ such that $|W_I^{t}|\leq 2k+l-1$ (since $\delta(D)\geq n-l$) and for each $v\in W_I$, the pair $(W_I^{t}, v)$ is $k$-connected in $D[W_I^\prime]$. In a similar way as Step 1.2, we may choose a set $W$ with $|W|\leq k(2k+l-1)$ outside $E_A^{\prime\prime}$ such that each vertex $u\in W_I^{t}$ has $k$ in-neighbors with the same color as $u$ in $W$. Moreover, this together with Step 1 implies that
\begin{equation}\label{6}
\text{all vertices in } W \text{ satisfy } (s2) \text{ and } (s4), \text{ and all vertices in } W_I^\prime \text{ satisfy } (s2), (s3), (s4).
\end{equation}
\vspace{-10mm}
\begin{figure}[H]
\centering    
    \includegraphics[scale=0.5]{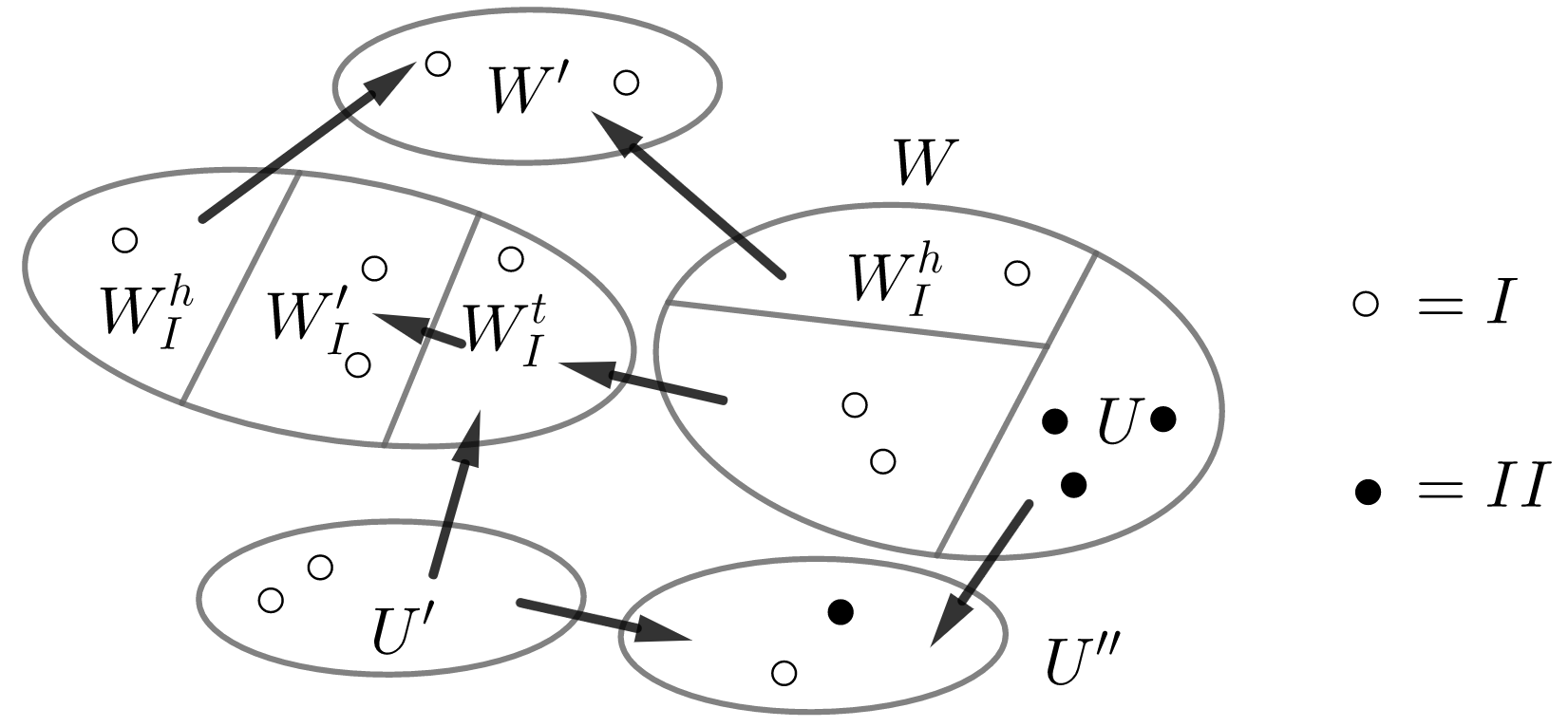}   
\caption{ The structure  of $D[W_I]$ at Step 2 in the proof of Claim \ref{claim1}.} 
\label{fig3} 
\end{figure}
\vspace{-3mm}

 Apply Proposition \ref{proposition1} to $D[W]$, there is a set $U\subseteq W$  with $|U|\leq 3(k+l) \log |W| \leq 6(k+l)\log(2kl)$ such that every vertex in $W\setminus U$ has at least $k$ out-neighbors in $U$. Now we change the color of the vertices in $U$ to $II$ (see Fig. \ref{fig3}). This operation results in that

\noindent (Q1) some vertices in $W_I^\prime$ may not satisfy (s2), since some vertex $u\in W_I^{t}$ may have less than $k$ in-neighbors with the same color as $u$ in $W$, and

\noindent (Q2) if all vertices in $U$ satisfy (s3), then all vertices in $W$ satisfy (s3) by Property \ref{lemma4} (iv).

In the same way as  Step 1.2, we can still greedily choose an uncolored set $U^\prime$ outside $E_A^\prime$ with $|U^\prime|\leq 6(k+l)\text{log}(2kl)$, and then color the vertices in $U^\prime$ by $I$ such that each vertex in $u\in W_I^{t}$ has at least $k$ in-neighbors with the same color as $u$. Property \ref{lemma4} (ii) and (iv) show that all vertices in $W_I^\prime$ satisfy (s2), and
\begin{equation}\label{11}
\text{all vertices in } U^\prime \text{ satisfy } (s2) \text{ and } (s4).
\end{equation}

Owing to (\ref{9}), we may also greedily choose an uncolored set $U^{\prime\prime}$ outside $E^\prime$ with $|U^{\prime\prime}|\leq 24f(k,l)$ such that each vertex $v\in U\cup U^\prime$ has $2k$ distinct out-neighbors  in $U^{\prime\prime}$, and color $k$ of them by $I$, and  $k$ of them by $II$. It follows from Property \ref{lemma4} (i), (iii) and (iv) that all vertices in $U^{\prime\prime}$ are safe, and all vertices in $U\cup U^\prime$  satisfy (s1) and (s3). By (\ref{6}), (\ref{11}) and (Q2), we get that all vertices in $U\cup U^\prime$ are safe, and all vertices in $W$ satisfy (s3). Therefore, it suffices to  guarantee that the vertices in $W\cup W_I^\prime\setminus U$ satisfy (s1).

Applying Lemma \ref{lemma3} to $D[W\cup W_I^\prime\setminus U]$, there is a set $W_I^h\subseteq W\cup W_I^\prime\setminus U$ such that $|W_I^h|\leq 2k+l-1$, and for each  $u\in W\cup W_I^\prime\setminus U$, the pair $(u, W_I^h)$ is $k$-connected in $D[W\cup W_I^\prime\setminus U]$. By (\ref{9}) and $|W \cup U^\prime \cup U^{\prime\prime}|\leq 200(k+l)f(k,l)$, we may greedily choose a set $W^\prime$ outside $E^{\prime\prime}$ in a same way as Step 1.1 such that $|W^\prime|\leq k(2k+l-1)$ and each vertex $u\in W_I^h$ has $k$ distinct out-neighbors with the same color as $u$ in $W^\prime$. Moreover, all vertices in $W^\prime$ are safe and all vertices in $W\cup W_I^\prime\setminus U$ satisfy (s1). Now all colored vertices in $W_I^\prime\cup W\cup U^\prime \cup U^{\prime\prime} \cup W^\prime$ are safe. So $|C_2^{\prime}|\leq 2\cdot (2k(2k+l-1)+6(k+l)\log(2kl)+ 24f(k,l))\leq 68f(k,l)$.

Set $C^\prime= C_1^{\prime} \cup C_2^{\prime}$. Then
\begin{equation*}
\begin{aligned}
|C^\prime|&\leq 2k(2k+l_0-2)+100(k+l)\log(2kl)+200f(k,l) +68f(k,l)\\
&\leq 2k(l+l^\prime)+368f(k,l)\\
&\leq 2kl^\prime+400f(k,l).
\end{aligned}
\end{equation*}
\end{proof}

\begin{claim}\label{claim2}
We can color some vertices outside $D_0$ such that every colored vertex is safe, and the set $C_1$ consisting of all vertices colored so far satisfies $|C_1|\leq 10^3(k+l)f(k,l)$.
\end{claim}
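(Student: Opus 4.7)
The main idea is to ensure every vertex of $D_0$ (colored by the initial assignment) becomes safe. Claim \ref{claim1} handles most of $D_0$ once we provide an appropriate colored ``shell'' around its vertices, but it forces $X\cup Y$ into the set $C$ and therefore does not directly handle them. I make each $v\in X\cup Y$ safe indirectly, by equipping it with $k$ safe neighbors of each color in each direction and then invoking Property \ref{lemma4} (iii), (iv).

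First, since $D$ is strongly $10^7k(k+l)^2\log(2kl)$-connected, every vertex $v\in V(D)$ (in particular every $v\in X\cup Y$) satisfies $d_D^+(v),d_D^-(v)\geq 10^7(k+l)f(k,l)$. Processing the $12k$ vertices of $X\cup Y$ one at a time, for each such $v$ (already colored $c(v)\in\{I,II\}$ by the initial coloring of $D_0$), I greedily select $2k$ distinct out-neighbors and $2k$ distinct in-neighbors of $v$ in $V(D)\setminus(D_0\cup E_A\cup E_B)$, avoiding auxiliaries chosen for previously processed vertices. This is feasible because the forbidden set has size at most $|D_0|+|E|+48k^2\leq 60k^2+\hat{\delta}^+(D)/(1000k)$ by (P0) and (\ref{2}), which is far smaller than $d_D^\pm(v)$ by (\ref{1}). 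Then color $k$ of the chosen out-neighbors with $c(v)$ and $k$ with the opposite color, and do the same for the in-neighbors. Denote the resulting auxiliary set by $N$, so $|N|\leq 48k^2$.

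Next, apply Claim \ref{claim1} with $C:=X\cup Y$ and with $W_i$ equal to the color-$i$ portion of $(D_0\cup N)\setminus (X\cup Y)$ for $i\in\{I,II\}$. The hypotheses hold: $|C|=12k$ and $|W_I|,|W_{II}|\leq |D_0|+|N|\leq 240k^2+48k^2\leq 300k^2\leq 8\cdot 10^3 f(k,l)$. Claim \ref{claim1} then produces a set $C'\subseteq V(D)\setminus(C\cup W_I\cup W_{II})$ with $|C'|\leq 2k\cdot 300k^2+400f(k,l)$ and a coloring making every vertex of $C'\cup W_I\cup W_{II}$ safe. Fix $v\in X\cup Y$ with color $i$. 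By construction $v$ has $k$ out-neighbors in $N$ of color $i$; they lie in $W_i$ and are therefore safe, hence in particular satisfy (s1). Property \ref{lemma4} (iii) then gives (s1) for $v$. The analogous arguments using the same-color in-neighbors, the other-color out-neighbors, and the other-color in-neighbors, combined with Property \ref{lemma4} (iii), (iv), yield (s2), (s3), (s4), so $v$ is safe. Setting $C_1:=D_0\cup N\cup C'$ completes the claim; using $|D_0|\leq 12kc\leq 240k^2$ and $(k+l)f(k,l)\geq k^3\log 2$, one checks $|C_1|\leq 288k^2+600k^3+400f(k,l)\leq 10^3(k+l)f(k,l)$.

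The main obstacle is the constant calibration: one must verify that the greedy selection in the first step never exhausts the candidate neighborhoods (which requires combining (\ref{1}) on the minimum degree with (\ref{2}) on $|E_A|,|E_B|$), and that the cumulative bound $|D_0|+|N|+|C'|$ is absorbed into $10^3(k+l)f(k,l)$. Since both $600k^3$ and $400f(k,l)$ each occupy less than a constant fraction of this target, their sum fits comfortably.
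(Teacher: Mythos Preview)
Your approach is essentially identical to the paper's: greedily attach $4k$ auxiliary neighbors to each vertex of $X\cup Y$, then invoke Claim~\ref{claim1} with $C=X\cup Y$ on the remainder of $D_0$ together with those auxiliaries, and finally use Property~\ref{lemma4}~(iii),~(iv) to pull safety back to $X\cup Y$. The structure and the final size count are both fine.

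There is, however, a genuine gap in the greedy selection step. You insist that the $4k$ auxiliary neighbors of each $v\in X\cup Y$ lie in $V(D)\setminus(D_0\cup E_A\cup E_B)$ and justify this by saying the forbidden set is ``far smaller than $d_D^\pm(v)$ by~(\ref{1}).'' But (\ref{1}) bounds $\hat\delta^\pm(D)$, not $d_D^-(v)$ for $v\in X$ or $d_D^+(v)$ for $v\in Y$; for such $v$ you only know $d_D^\pm(v)\geq 10^7(k+l)f(k,l)$ from the connectivity. Meanwhile $|E|\leq \hat\delta^+(D)/(1000k)$ by~(\ref{2}), and $\hat\delta^+(D)$ can be of order $n$, which may vastly exceed $10^7(k+l)f(k,l)$. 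So, for instance, the in-neighborhood of some $x_i\in X$ could lie entirely inside $E_B$, and your selection would fail. (There is also a minor arithmetic slip: $|D_0|+48k^2$ can be as large as $288k^2$, not $60k^2$.)

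The fix is immediate and is exactly what the paper does: drop the requirement that the auxiliaries avoid $E$. Choose them merely in $V(D)\setminus D_0$ (avoiding previously chosen auxiliaries), which is feasible since $|D_0|+48k^2\leq 288k^2\ll 10^7(k+l)f(k,l)\leq d_D^\pm(v)$. You do not need the auxiliaries to lie outside $E$, because Claim~\ref{claim1} makes every vertex of $W_I\cup W_{II}$ safe regardless, and that is all Property~\ref{lemma4}~(iii),~(iv) requires. With this correction your argument matches the paper's proof.
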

\begin{proof}
To prove Claim \ref{claim2}, for every $v\in X\cup Y$ we in turn greedily
choose $2k$ uncolored in-neighbors and $2k$ uncolored out-neighbors, all distinct from each other, and color $k$ of the in-neighbors and $k$ of the out-neighbors by $I$ and the remaining $2k$ in/out-neighbors by $II$. Let $D^\prime$ denote the new colored vertices. Due to Property \ref{lemma4} (iii) and (iv), if all vertices in $D^\prime$ are safe, then all vertices in $X\cup Y$ are also safe.

Let $W_I$ and $W_{II}$ be the sets of the vertices of colors $I$ and $II$ in $D_0\cup D^\prime \setminus (X\cup Y)$, respectively. Then $|W_I|,|W_{II}|\leq 6ck+24k^2$. By applying Claim \ref{claim1} with $C=X\cup Y$ and $l^\prime=6ck+24k^2$, we obtain
\begin{equation*}
\begin{aligned}
|C_1|&\leq 12ck+48k^2+2k\cdot(6ck+24k^2)+400f(k,l)\\
&\leq 10^3(k+l)f(k,l).
\end{aligned}
\end{equation*}
This completes the proof of Claim \ref{claim2}.
\end{proof}

\subsection{Construction of the correct $i$-path}
The aim of this subsection is to find the correct $i$-path for all $i\in [6k]$, and make sure that all vertices in these paths are safe. So, let us first give  the definition of the ``correct" $i$-path.

 \begin{definition}\label{definition2}
(Correct $i$-path) For each $i\in[6k]$, an i-path $P_i$ is a path from the head $b_i$ of $B_i$ to the tail $a_i$ of $A_i$, and an $i$-path $P_i$ is called correct if $i\in[1,2k]$ or it is odd  when $i\in[2k+1,4k]$ or it is even when $i\in[4k+1,6k]$.
\end{definition}

 The paths $P_i$ that we construct will be either ``short" or ``long". A path is said to be \emph{short} if its length is at most $1200(k+l)\log(2kl)+3l$. Otherwise we say it \emph{long}. Firstly, we will try to find the short disjoint correct $i$-path $P_i$ for each $i\in[6k]$. Let $\mathcal{P}_{s}$ be the set of short disjoint correct $i$-paths satisfying the following properties.

(O1) Int($\mathcal{P}_{s}) \subseteq V(D)\setminus C_1$.

(O2) For each $i \in [6k]$, $\mathcal{P}_{s}$ contains at most one $i$-path.

(O3) The number of paths in $\mathcal{P}_{s}$ is as large as possible.

 Let $I_{s}$ be the set of all indices $i$ for which we have been able to choose a correct short $i$-path and  $L=[6k]\setminus I_{s}$.

\begin{claim}\label{claim3}
 We can color all vertices in $V(\mathcal{P}_s)$ and some other vertices of $D$ such that

\noindent (i) every colored vertex is safe, and the set $C_2$ consisting of all vertices colored so far satisfies $|C_2|\leq 25000(k+l)f(k,l)$,

\noindent (ii) for each $i\in L$, any $i$-path whose internal vertices lie in $V(D)\setminus C_2$ has length at least $1200(k+l)\log(2kl)+2$.
\end{claim}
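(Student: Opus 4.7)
The plan is to extend the coloring fixed in Claim \ref{claim2} by assigning colors to the internal vertices of every $P_i \in \mathcal{P}_s$ in the pattern dictated by Definition \ref{definition2}: for $i \in [1, k]$ every internal vertex of $P_i$ is colored $I$; for $i \in [k+1, 2k]$ every internal vertex is colored $II$; and for $i \in [2k+1, 6k]$ the colors alternate along $P_i$, starting with the opposite of $b_i$'s color and ending at the opposite of $a_i$'s color. Correctness of $P_i$ for indices in $[2k+1,6k]$ is precisely the parity condition that makes this alternation consistent. By (O1), (O2), and the pairwise disjointness of paths in $\mathcal{P}_s$, this extension is well defined. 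The number of newly colored vertices is at most $6k(1200(k+l)\log(2kl)+3l)$, which is comfortably below the $8 \cdot 10^3 f(k,l)$ threshold required by Claim \ref{claim1}.

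I would then invoke Claim \ref{claim1} with $C = C_1$ (which contains $X \cup Y$ by Claim \ref{claim2}) and $W_I, W_{II}$ the newly colored vertex sets of each color. This produces a supplementary set $C' \subseteq V(D) \setminus (C_1 \cup W_I \cup W_{II})$ and a coloring of $C'$ with $|C'| \leq 2k|W_I \cup W_{II}| + 400f(k,l)$, rendering every colored vertex safe. Taking $C_2 := C_1 \cup W_I \cup W_{II} \cup C'$, the bound $|C_2| \leq |C_1| + |W_I \cup W_{II}| + |C'| \leq 25000(k+l)f(k,l)$ follows by a routine calculation, using $|C_1| \leq 10^3(k+l)f(k,l)$ from Claim \ref{claim2} and $k \leq k+l$ to absorb the $2k|W_I \cup W_{II}|$ term into the $25000(k+l)f(k,l)$ budget. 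This establishes (i).

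For (ii), suppose toward a contradiction that some $i \in L$ admits an $i$-path $Q$ of length at most $1200(k+l)\log(2kl)+1$ with $\text{Int}(Q) \subseteq V(D) \setminus C_2$. Since $C_1 \cup V(\mathcal{P}_s) \subseteq C_2$, the path $Q$ already satisfies (O1) and is internally disjoint from $\mathcal{P}_s$; if $Q$ happened to be correct, we could append it to $\mathcal{P}_s$, contradicting the maximality (O3). Hence $Q$ has the wrong parity for correctness. I would then perform a parity-correction detour: replace one internal arc $uv$ of $Q$ by a short subpath of length $2$ or $3$ through auxiliary vertices $w$ chosen in $V(D) \setminus (C_1 \cup V(\mathcal{P}_s))$. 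Because $\delta(D) \geq n - l$, any two vertices share at least $n - 2l$ common neighbors, so such a detour using at most $3l - 1$ auxiliary vertices always exists while keeping the overall length at most $1200(k+l)\log(2kl)+3l$. The resulting path is a correct short $i$-path internally disjoint from $C_1 \cup V(\mathcal{P}_s)$, again contradicting (O3).

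The main obstacle I anticipate lies in the parity-correction step for $i \in [2k+1, 6k]$, where each detour vertex $w$ must carry a forced color to preserve the alternating $I$/$II$ pattern, and that forced coloring must remain compatible with the safety obtained via Claim \ref{claim1}. The delicate accounting is to verify that (a) the forced colors on these detour vertices coexist with the coloring of $C'$ without destroying safety in the sense of Property \ref{lemma4}, and (b) the $3l$ slack between the short-path length bound and the $1200(k+l)\log(2kl)+2$ threshold in (ii) is enough to absorb every detour across all $i \in L$ simultaneously. These two points together secure both the $|C_2|$ budget and the maximality contradiction needed for (ii).
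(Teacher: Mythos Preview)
Your argument for part (i) is essentially right and matches the paper's use of Claim~\ref{claim1}. The real difficulty is part (ii), and here your proposal has a genuine gap.

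Your plan for (ii) is to take a single hypothetical short $i$-path $Q$ avoiding $C_2$ and, if $Q$ has the wrong parity, correct it by inserting a detour $u \to w \to v$ in place of some arc $uv$. But the hypothesis $\delta(D)\ge n-l$ only says that $|N^+(u)\cup N^-(u)|\ge n-l$; it gives you no control over $|N^+(u)\cap N^-(v)|$, which is what you need for such a detour. Your sentence ``any two vertices share at least $n-2l$ common neighbors'' is about undirected adjacency and does not produce a vertex $w$ with the required arc directions. So the single-path parity correction as written does not go through, and without it you cannot rule out a short incorrect $i$-path surviving in $V(D)\setminus C_2$.

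The paper handles this differently: it \emph{enlarges} $C_2$ by two further layers of ``blocking'' paths. First it collects, for each $i\in L$, all internally disjoint $i$-paths of length at most $2l+2$ avoiding $V(\mathcal{P}_s)\cup C_1$; a short counting argument shows there are at most $l+1$ of these per index, so their total size is $O(kl^2)$. Call this family $\mathcal{P}_s'$. Second, it shows that once $\mathcal{P}_s'$ is removed, for each $i\in L$ there can be at most \emph{one} further short $i$-path avoiding $V(\mathcal{P}_s\cup\mathcal{P}_s')$---and the proof of this uniqueness is exactly a two-path parity correction: given two such paths $P_i=b_iv_1\cdots$ and $P_i'=b_iu_1\cdots$, both now of length at least $2l+3$, the vertex $v_1$ must be adjacent to some $u_{i_1}$ with $i_1\in[2l]$ odd, and splicing at that point yields a correct short $i$-path, contradicting (O3). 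These at most $|L|$ surviving paths form $\mathcal{P}_s''$. The paper then colors $\text{Int}(\mathcal{P}_s'\cup\mathcal{P}_s'')$ (by $I$, say), includes them in $C_2$, and applies Claim~\ref{claim1} with $l'=7300f(k,l)+16kl^2$. Part (ii) then holds by construction: any $i$-path avoiding $C_2$ is internally disjoint from $\mathcal{P}_s\cup\mathcal{P}_s'\cup\mathcal{P}_s''$, and the uniqueness statement forbids it from being short.

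In short, the ``some other vertices'' in the claim are not just the safety-restoring set $C'$ from Claim~\ref{claim1}; they must also include the internal vertices of these blocking families $\mathcal{P}_s'$ and $\mathcal{P}_s''$. Your proposal omits them, and your single-path detour does not substitute for them. (Your final paragraph about forced colors on detour vertices is a red herring: the detour path is never colored---it exists only to contradict (O3).)
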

\begin{proof}
To show this claim, we begin by proving the following statements hold.

$(a)$ For each $i\in L$, there exist at most $l+1$ internally disjoint $i$-paths of length at most $2l + 2$ in $V(D)\setminus (V(\mathcal{P}_{s})\cup C_1)$.

\emph{Proof of (a).} Suppose not, then there exists an index $i_1\in L$ such that $V(D)\setminus (V(\mathcal{P}_{s})\cup C_1)$ has at least $l+2$ internally disjoint $i_1$-paths of length at most $2l+2$. Let $P_{i_1,1},\dots, P_{i_1,t}$ be these $i_1$-paths, where $t\geq l+2$, and  $P_{i_1,j}=b_{i_1}u_{i_1,j}^1\cdots u_{i_1,j}^{|P_{i_1,j}|-2} a_{i_1}$ with $j\in [t]$. According to (O3), these $i_1$-paths are incorrect. This implies that $u_{i_1,t}^1$ is not adjacent to $u_{i_1,j}^1$ for each $j\in [t-1]$, which contradicts that $\delta(D)\geq n-l$. Hence (a) holds.

For all $i\in [L]$, let $\mathcal{P}_s^{\prime}$ be a collection of $i$-paths in $V(D)\setminus (V(\mathcal{P}_{s})\cup C_1)$ of length at most $2l + 2$. Then $|\text{Int}(\mathcal{P}_s^{\prime})|\leq 16kl^2$.

$(b)$ For each $i\in L$, there exists at most one $i$-path whose internal vertices lie in $V(D)\setminus (V(\mathcal{P}_{s}\cup \mathcal{P}^\prime_{s}))$ and whose length is at most $1200(k+l)\text{log}(2kl)+1$ such that all these paths are disjoint.

\emph{Proof of (b).} Suppose not, then let $P_i = b_iv_1\cdots v_aa_i$ and $P_i^\prime = b_iu_1\cdots u_ba_i$ be such two internally disjoint $i$-paths for some $i\in L$. Owing to (O3) and (a), those paths must be incorrect and the length of those paths is at least $2l+3$. Since $\delta(D)\geq n-l$, it follows that there exists a vertex $u_{i_1}$ such that $i_1\in [2l]$ is odd, and either $v_1u_{i_1}\in A(D)$ or $u_{i_1}v_1\in A(D)$. We may assume that $v_1u_{i_1}\in A(D)$, and so $b_iv_1u_{i_1}u_{i_1+1}\cdots u_ba_i$ is a correct $i$-path which is disjoint from all the other paths in $\mathcal{P}_{s}$, and its length is at most $1200(k+l)\text{log}(2kl)+1+2l\leq 1200(k+l)\text{log}(2kl)+3l$, contrary to (O3). Hence (b) holds.

For all $i\in L$, let $\mathcal{P}_s^{\prime\prime}$ be a collection of such incorrect $i$-paths in (b). Thus the number of paths in $\mathcal{P}_s\cup \mathcal{P}_s^{\prime\prime}$ is at most $6k$, and then $|V(\mathcal{P}_s\cup \mathcal{P}_s^{\prime\prime})|\leq 7300f(k,l)$.

Now we color all vertices in $V(\mathcal{P}_s\cup \mathcal{P}_s^{\prime}\cup \mathcal{P}_s^{\prime\prime})$, and color all vertices in $\text{Int}(\mathcal{P}_s^{\prime} \cup\mathcal{P}_s^{\prime\prime})$ by $I$. If $i\in[2k]\cap I_s$, then we color the vertices of $P_i$ with the same color. If $i\in[2k+1,6k]\cap I_s$, then we color the vertices on $P_i$ such that the color  alternates along the orientation of $P_i$. Together with Claim \ref{claim1} (applied with $C=C_1$ and $l^\prime=7300f(k,l)+16kl^2$) and Claim \ref{claim2}, this implies that every colored vertex is safe, and $|C_2|\leq |C_1|+ 7300f(k,l)+16kl^2 +2k\cdot (7300f(k,l)+16kl^2) +400f(k,l)
\leq 25000(k+l)f(k,l)$.
 \end{proof}

Set $D^\prime=(D\setminus C_2)\cup  \bigcup_{i\in L}(a_i\cup b_i)$.  Recall that $D$ is a strongly $10^7(k+l)f(k,l)$-connected digraph, Claim \ref{claim3} (i) implies that $D^\prime$ is strongly $96(k+l) \cdot 10^5 f(k,l)$-connected. Theorem \ref{corollary1} implies that $D^\prime$ is $192000 kf(k,l)$-linked. Together with Proposition \ref{proposition2} (i) and Claim \ref{claim3} (ii), this implies that for each $i\in L$ we can find $32000 f(k,l)$ $i$-paths in $D^\prime$ such that all these
$32000f(k,l)|L|$ paths whose internal vertices avoid $C_2$ have length at
least $1200(k+l)\log(2kl)+2$, and are internally disjoint.

 Proposition \ref{proposition2} (ii) yields that  for all $i\in L$ there exists $800f(k,l)$ internally disjoint $i$-paths such that the number of all vertices in these $i$-paths is at most $(n-|C_2|)/40$.  We choose this collection of $800f(k,l)|L|$ paths
such that the length of these paths is minimal. Set $L_0=[800 f(k,l)]$.


For all $i\in L$ and all $j \in L_0$, let $P_{i,j}$ denote the $j$th $i$-path we have chosen. Then
\begin{equation}\label{16}
|\bigcup\limits_{(i,j)\in L\times L_0}\text{Int}(P_{i,j})|\leq \frac{n}{40}.
\end{equation}
Let $P^1_{ i,j}, P^2_{ i,j}, P^3_{ i,j}$  be
disjoint segments of the interior of $P_{i,j}$ so that $P_{i,j} = b_iP^1_{i,j}P^2_{i,j} P^3_{i,j}a_i$, and
\begin{equation}\label{13}
|P^1_{i,j}|=|P^3_{i,j}|=600(k+l)\log(2kl).
\end{equation}
Write
\begin{equation*}
\begin{aligned}
P^\alpha=\bigcup\limits_{(i,j)\in L\times L_0 } V(P^\alpha_{ i,j}) \text{ with } \alpha\in [3],\text{ and }
P_{i,j}=b_iv_{i,j}^1\cdots v_{i,j}^{|P_{i,j}|-2}a_i.
\end{aligned}
\end{equation*}
Owing to (\ref{13}), we obtain
\begin{equation}\label{15}
|P^1|=|P^3|\leq 600(k+l)\log(2kl)\cdot 6k \cdot 800 f(k,l) \leq 4\cdot 10^6(2kl)^6\leq 2^{22}(2kl)^6.
\end{equation}
Further, for each $(i,j)\in L \times L_0$, the fact that $\delta(D)\geq n-l$ and the minimality of these paths implies the following statements, where $\overline{N}(v)=\{u\in V(D): u \text{ is not adjacent to } v\}$.

(A1) Every vertex  $v_{i,j}^t\in V(P_{i,j})$ dominates $V(b_iP_{i,j}v_{i,j}^{t-2})\setminus \overline{N}(v_{i,j}^t)$  and is dominated by $V(v_{i,j}^{t+2}P_{i,j}a_i)\setminus \overline{N}(v_{i,j}^t)$.

(A2) If a vertex $u\notin C_2\cup P^1 \cup P^2 \cup P^3$ dominates $v_{i,j}^t\in V(P_{ i,j})$ (resp. is dominated by $v_{i,j}^t\in V(P_{ i,j})$), then $u$ dominates $V(b_iP_{i,j}v_{i,j}^{t-3})\setminus \overline{N}(u)$ (resp. is dominated by $V(v_{i,j}^{t+3}P_{i,j} a_i)\setminus \overline{N}(u)$).

Notice that Proposition \ref{proposition2} does not provide any information on the length of $P^2_{i,j}$ for each $(i,j)\in L\times L_0$, so we cannot apply Claim \ref{claim1}  to find a small set to guarantee the safety of all vertices in $V(P^2_{ i,j})$. However, if there exist two subsets $W^1_{i,j}\subseteq V(P^1_{i,j})$ and $W^3_{i,j}\subseteq V(P^3_{ i,j})$ with  $|W^1_{i,j}|=|W^3_{i,j}|=2(k+l+2)$ for each $(i,j)\in L\times L_0$  and a coloring of the vertices in $W_{i,j}^1\cup W_{i,j}^2$
such that there are $k+l+2$ vertices colored by $I$, and $k+l+2$ vertices colored by $II$ in each of $W_{i,j}^1$ and $W_{i,j}^2$, and each vertex of $W^1_{i,j}\cup W^3_{i,j}$ is safe, then (A1) and Property \ref{lemma4} (iii)-(iv) imply that all vertices in $\bigcup_{(i,j)\in L\times L_0}V(P^2_{ i,j})$ will be safe with whichever color. Moreover, by (A2) and Property \ref{lemma4} (iii)-(iv), for an uncolored vertex $v\in  E\setminus (C_2\cup P^1 \cup P^2 \cup P^3)$, if  it has an out-neighbor in  $V(P_{i,j}^2)$, then $v$ will satisfy (s1) and (s3) with whichever color; If it has an in-neighbor in $V(P_{i,j}^2)$, then $v$ will satisfy (s2) and (s4) with whichever color. This means that we can  guarantee the safety of a few vertices to ensure the safety of more. Therefore, our aim is to find such two disjoint subsets for each $(i,j)\in L\times L_0$,  whose safety can be guaranteed by two disjoint smaller subsets $P_I,P_{II}\subseteq P^1 \cup P^3$ (see Claims \ref{claim4} and \ref{claim7}).



\begin{claim}\label{claim4}
There exist two disjoint sets  $P_I,P_{II}\subseteq P^1 \cup P^3$ with
$|P_I|,|P_{II}|\leq  90(k+l)\text{log}(2kl)$ such that every vertex in $P^1 \cup P^3\setminus (P_I\cup P_{II})$ has at least $k$ out-neighbors and at least $k$
in-neighbors in each of $P_{I}$ and $P_{II}$.
\end{claim}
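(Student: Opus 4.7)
The plan is to obtain $P_I$ and $P_{II}$ by applying Proposition \ref{proposition1} twice: once to $D[P^1\cup P^3]$, and a second time to the induced subdigraph on what is left. The key observation that makes this possible is that the hypothesis $\delta(D)\geq n-l$ is inherited by every induced subdigraph: any $v\in S\subseteq V(D)$ has at most $l-1$ non-neighbors in all of $V(D)\setminus\{v\}$, and therefore $\delta(D[S])\geq |S|-l$. Hence Proposition \ref{proposition1} is applicable to $D[P^1\cup P^3]$ (and to any induced subdigraph) with the same parameter $l$.

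Concretely, I would first apply Proposition \ref{proposition1} to $D[P^1\cup P^3]$ to produce a set $P_I\subseteq P^1\cup P^3$ with $|P_I|\leq 3(k+l)\log|P^1\cup P^3|$ such that every vertex of $(P^1\cup P^3)\setminus P_I$ has at least $k$ out-neighbors and at least $k$ in-neighbors in $P_I$. I would then apply Proposition \ref{proposition1} a second time, this time to the induced subdigraph $D[(P^1\cup P^3)\setminus P_I]$, to obtain a set $P_{II}\subseteq(P^1\cup P^3)\setminus P_I$ with the same size bound and such that every vertex of $(P^1\cup P^3)\setminus(P_I\cup P_{II})$ has at least $k$ out-neighbors and at least $k$ in-neighbors in $P_{II}$. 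The sets $P_I$ and $P_{II}$ are disjoint by construction, and each vertex outside $P_I\cup P_{II}$ (inside $P^1\cup P^3$) meets both neighbor requirements simultaneously.

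It remains to verify the size bound. From (\ref{15}) we have $|P^1\cup P^3|\leq 2\cdot 2^{22}(2kl)^6=2^{23}(2kl)^6$, so $\log|P^1\cup P^3|\leq 23+6\log(2kl)\leq 30\log(2kl)$, where the final inequality uses $2kl\geq 2$, so $\log(2kl)\geq 1\geq 23/24$. Consequently $|P_I|,|P_{II}|\leq 3(k+l)\cdot 30\log(2kl)=90(k+l)\log(2kl)$, exactly the bound claimed.

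I do not foresee a substantive obstacle. The proof is essentially a direct double application of Proposition \ref{proposition1}; the only points requiring care are (i) confirming that the relevant induced subdigraphs still satisfy $\delta\geq|\cdot|-l$ so that Proposition \ref{proposition1} applies, and (ii) the short arithmetic simplification of $\log|P^1\cup P^3|$ using the polynomial bound (\ref{15}) to turn the natural constant $3$ into the advertised constant $90$.
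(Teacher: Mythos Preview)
Your proposal is correct and essentially identical to the paper's own proof: both apply Proposition \ref{proposition1} first to $D[P^1\cup P^3]$ to obtain $P_I$, then to $D[(P^1\cup P^3)\setminus P_I]$ to obtain $P_{II}$, and use (\ref{15}) to bound $\log|P^1\cup P^3|\leq\log\bigl(2^{23}(2kl)^6\bigr)\leq 30\log(2kl)$. Your explicit remark that the hypothesis $\delta(D)\geq n-l$ passes to induced subdigraphs is a helpful clarification that the paper leaves implicit.
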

\begin{proof}
To prove this claim, apply Proposition \ref{proposition1} and (\ref{15}) to $D[P^1 \cup P^3]$ to find a set $P_I \subseteq  P^1 \cup P^3$ with
$|P_I|\leq 3(k+l) \log(2^{23}(2kl)^6) \leq  90(k+l)\log(2kl)$ such that every vertex in $P^1 \cup P^3\setminus P_{I}$ has at
least $k$ out-neighbors and at least $k$ in-neighbors in $P_{I}$. By applying
Proposition \ref{proposition1} and (\ref{15}) to $D[P^1 \cup P^3\setminus P_{I}]$, there exists a set $P_{II} \subseteq P^1 \cup P^3\setminus P_{I}$ with $|P_{II}|\leq 90(k+l)\log(2kl)$ such that every vertex in $P^1 \cup P^3\setminus (P_I\cup P_{II})$ has at least $k$ out-neighbors and  at least $k$ in-neighbors in $P_{II}$. This completes the proof of Claim \ref{claim4}.
\end{proof}

Let $L_1 = \{(i,j) : V(P_{i,j}) \cap (P_I\cup P_{II})\}\neq \emptyset$. For all  $(i,j)\in L_1$ and $\alpha \in \{1,3\}$, define
\begin{center}
$U^\alpha_{i,j}$ to be a subset of $V(P^\alpha_{i,j})\setminus (P_I\cup P_{II})$ with $|U^\alpha_{i,j}|=2(k+l+2)$.
\end{center}
 This is possible, since $|V(P^\alpha_{i,j})\setminus (P_I\cup P_{II})| \geq 400(k+l)\text{log}(2kl)$. Further, set $U=\bigcup_{(i,j)\in L_1}U^1_{i,j} \cup U^3_{i,j}$. It follows from Claim \ref{claim4} that
\begin{equation}\label{12}
 |U|\leq 180(k+l)\log(2kl)\cdot 4(k+l+2) \leq 2\cdot 10^3(k+l)^2\log(2kl).
\end{equation}

\begin{claim}\label{claim7}
We may color some uncolor vertices lying outside $U$ such that

\noindent (i) all vertices in $P_I$ are colored I and all vertices in $P_{II}$ are colored II,

\noindent (ii) all colored vertices are safe, the set $C_3^\prime$ consisting of the colored vertices at this step satisfies $|C_3^\prime\setminus(P_{I}\cup P_{II})| \leq 580f(k,l)$, and the set $C_3$ consisting of all vertices colored so far satisfies $|C_3|\leq 3\cdot 10^4(k+l)f(k,l)$,

\noindent (iii) all vertices in $P^1 \cup P^3 \setminus (P_{I}\cup P_{II})$  will be safe with whichever color.
\end{claim}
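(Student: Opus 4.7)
The plan is to set things up so that Claim \ref{claim1} applies essentially as a black box. First I would color every vertex of $P_I$ with $I$ and every vertex of $P_{II}$ with $II$; this immediately secures (i). Then I would invoke Claim \ref{claim1} with $C := C_2 \cup U$, $W_I := P_I$, $W_{II} := P_{II}$, and $l^\prime := 90(k+l)\log(2kl)$. The key trick is including $U$ inside $C$: by the conclusion of Claim \ref{claim1} the new colored set $C^\prime$ lies in $V(D) \setminus (C \cup W_I \cup W_{II})$ and therefore automatically avoids $U$, as the statement requires. (Intuitively, the $U^\alpha_{i,j}$ are being held in reserve to anchor the middle segments $P^2_{i,j}$ in the next subsection.)

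To justify that Claim \ref{claim1} applies, note first that $X \cup Y \subseteq C_1 \subseteq C_2 \subseteq C$. By Claim \ref{claim3}(i) and (\ref{12}),
\begin{equation*}
|C| \;\leq\; 25000(k+l)f(k,l) + 2\cdot 10^3 (k+l)^2\log(2kl) \;\leq\; 3\cdot 10^4 (k+l)f(k,l),
\end{equation*}
while Claim \ref{claim4} gives $|W_I|, |W_{II}| \leq 90(k+l)\log(2kl) = l^\prime \leq 8\cdot 10^3 f(k,l)$ (since $k \geq 1$). The lemma then produces a coloring of a set $C^\prime$ of size at most $2kl^\prime + 400 f(k,l) \leq 180 f(k,l) + 400 f(k,l) = 580 f(k,l)$, with every vertex of $P_I \cup P_{II} \cup C^\prime$ safe. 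Taking $C_3^\prime := P_I \cup P_{II} \cup C^\prime$ gives $|C_3^\prime \setminus (P_I \cup P_{II})| \leq 580 f(k,l)$, and
\begin{equation*}
|C_3| \;\leq\; |C_2| + |P_I| + |P_{II}| + |C^\prime| \;\leq\; 25000(k+l)f(k,l) + 180(k+l)\log(2kl) + 580 f(k,l) \;\leq\; 3\cdot 10^4 (k+l)f(k,l),
\end{equation*}
which yields (ii).

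For (iii), fix any $v \in P^1 \cup P^3 \setminus (P_I \cup P_{II})$. By Claim \ref{claim4}, $v$ has at least $k$ in-neighbors and at least $k$ out-neighbors in each of $P_I$ and $P_{II}$. Since every vertex of $P_I$ is now colored $I$ and safe, and every vertex of $P_{II}$ is now colored $II$ and safe, Property \ref{lemma4}(iii)--(iv) guarantees that, whichever colour $v$ later receives, it will inherit all four conditions (s1)--(s4) from its neighbours in $P_I$ or $P_{II}$ of the matching/opposite colour. The only real obstacle in this claim is therefore bookkeeping: confirming that $|C_2 \cup U|$ still fits under the $3\cdot 10^4(k+l) f(k,l)$ threshold of Claim \ref{claim1} and that $l^\prime$ fits under its $8\cdot 10^3 f(k,l)$ threshold. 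Both are routine given the definition $f(k,l) = k(k+l)\log(2kl)$ and the existing bounds on $|C_2|$ and $|U|$.
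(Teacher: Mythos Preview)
Your proposal is correct and follows essentially the same approach as the paper: color $P_I$ and $P_{II}$ as required, apply Claim~\ref{claim1} with $C=C_2\cup U$ and $l'=90(k+l)\log(2kl)$ to obtain $C'$, and then deduce (iii) from Claim~\ref{claim4} together with Property~\ref{lemma4}(iii)--(iv). If anything, you are slightly more explicit than the paper in checking the hypotheses of Claim~\ref{claim1} (in particular that $X\cup Y\subseteq C$ and that $C'$ automatically avoids $U$).
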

\begin{proof}
Color the vertices in $P_I\cup P_{II} $ as (i) required. According to Claim \ref{claim3} and (\ref{12}), we have $|C_2\cup  U|\leq 3\cdot 10^4(k+l)f(k,l)$. Apply Claim \ref{claim1} with $C=C_2\cup  U$ and $l^\prime= 90(k+l)\log(2kl)$ to obtain a set $C^\prime\subseteq V(D)\setminus (C_2\cup U \cup P_I\cup P_{II})$ such that every vertex in $P_I\cup P_{II} \cup C^\prime$ is safe, and $|C_3^\prime\setminus(P_{I}\cup P_{II})|=|C^\prime|
\leq 180f(k,l) +400f(k,l)\leq580f(k,l)$.
 Combining this with Claims \ref{claim3} and \ref{claim4} yields
$|C_3|\leq |C_2|+ |C^\prime| +|P_I\cup P_{II}| \leq 3\cdot 10^4(k+l)f(k,l)$, and then (iii) follows from (i), (ii), Claim \ref{claim4} and Property \ref{lemma4} (iii)-(iv).
\end{proof}

Let $L_2 = \{(i,j) : V(P_{i,j}) \cap (C_3^\prime\setminus (P_I\cup P_{II}))\}\neq \emptyset$. For all  $(i,j)\in L_2$ and $\alpha \in \{1,3\}$, define
\begin{center}
$V^\alpha_{i,j}$ to be a subset of $V(P^\alpha_{i,j})\setminus (C_3^\prime\setminus (P_I\cup P_{II}))$ with $|V^\alpha_{i,j}|=2(k+l+2)$.
\end{center}
This is also possible, since $|V(P^\alpha_{i,j})\setminus (C_3^\prime\setminus (P_I\cup P_{II}))| \geq 20f(k,l)$. Set $V=\bigcup_{(i,j)\in L_2}V^1_{i,j} \cup V^3_{i,j}$. It follows from Claim \ref{claim7} (ii) that
\begin{equation}\label{14}
 |V|\leq 580f(k,l)\cdot 4(k+l+2) \leq 5\cdot 10^3(k+l)f(k,l).
\end{equation}

\begin{claim}\label{claim5}
For  $(i,j)\in L_1\cup L_2$ and $\alpha \in \{1,3\}$ we may color all vertices in $U_{i,j}^\alpha \cup V_{i,j}^\alpha$ such that

\noindent (i) there are $k+l+2$ vertices colored by I, and $k+l+2$ vertices colored by II in each of $U_{i,j}^\alpha$ and $V_{i,j}^\alpha$ (see Fig. \ref{fig5}),

\noindent (ii) all colored vertices are safe, and the set $C_4$ consisting of all vertices colored so far satisfies $|C_4|\leq 4\cdot 10^4(k+l)f(k,l)$,

\noindent (iii) all vertices in $V(P^2_{i,j})\setminus (U \cup V)$  will be safe with whichever color.
\end{claim}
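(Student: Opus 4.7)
The plan is to do two independent colouring passes and then deduce (iii) from (A1) combined with Property~\ref{lemma4}. For (i), the sets $U_{i,j}^\alpha$ are entirely uncoloured at this stage because $U_{i,j}^\alpha\subseteq V(P^\alpha_{i,j})\setminus(P_I\cup P_{II})$ and the colouring produced in Claim~\ref{claim7} was required to avoid $U$; I therefore split each $U_{i,j}^\alpha$ arbitrarily into two blocks of size $k+l+2$ and colour one block $I$ and the other $II$. Each $V_{i,j}^\alpha$ may already contain coloured vertices, but only from $P_I\cup P_{II}$ (since $V_{i,j}^\alpha\cap(C_3^\prime\setminus(P_I\cup P_{II}))=\emptyset$ by construction). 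I pick $V_{i,j}^\alpha$ so that $|V_{i,j}^\alpha\cap P_I|\le k+l+2$ and $|V_{i,j}^\alpha\cap P_{II}|\le k+l+2$, which is feasible because the pool $V(P^\alpha_{i,j})\setminus(C_3^\prime\setminus(P_I\cup P_{II}))$ contains at least $20f(k,l)$ vertices, and then top each colour class up to exactly $k+l+2$ by colouring the remaining uncoloured vertices of $V_{i,j}^\alpha$. When $(i,j)\in L_1\cap L_2$ I additionally insist that $U_{i,j}^\alpha$ and $V_{i,j}^\alpha$ be disjoint, which is again easy given the size of the pool.

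For (ii) every newly coloured vertex lies either in $U_{i,j}^\alpha$ or in $V_{i,j}^\alpha\setminus(P_I\cup P_{II})$, hence in $(P^1\cup P^3)\setminus(P_I\cup P_{II})$ and outside $C_3^\prime$; Claim~\ref{claim7}(iii) therefore makes it safe regardless of which colour it receives. The vertices of $V_{i,j}^\alpha\cap(P_I\cup P_{II})$ are already safe by Claim~\ref{claim7}(ii). For the cardinality bound I use $|C_4|\le|C_3|+|U|+|V|$ together with (\ref{12}), (\ref{14}) and the elementary estimate $(k+l)^2\log(2kl)\le (k+l)f(k,l)$, obtaining $|C_4|\le (3\cdot 10^4+2\cdot 10^3+5\cdot 10^3)(k+l)f(k,l)\le 4\cdot 10^4(k+l)f(k,l)$.

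For (iii) I apply (A1) to an arbitrary $v=v_{i,j}^t\in V(P^2_{i,j})\setminus(U\cup V)$: $v$ out-dominates $V(P^1_{i,j})\setminus\overline{N}(v)$ and is in-dominated by $V(P^3_{i,j})\setminus\overline{N}(v)$. Because $|\overline{N}(v)|\le l$ and the relevant $U_{i,j}^1$ or $V_{i,j}^1$ (which sits inside $V(P^1_{i,j})$) contains $k+l+2$ vertices of each colour, at least $k+2\ge k$ out-neighbours of $v$ of each colour survive and are safe; the symmetric argument applied to $U_{i,j}^3$ or $V_{i,j}^3$ supplies $k$ safe in-neighbours of each colour. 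Property~\ref{lemma4}(iii)--(iv) then shows that $v$ satisfies (s1)--(s4) with whichever colour it ultimately receives, which is (iii).

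The only real subtlety is the careful choice of $V_{i,j}^\alpha$: if $V_{i,j}^\alpha$ absorbed more than $k+l+2$ vertices from $P_I$ or more than $k+l+2$ from $P_{II}$, then a balanced colouring would be impossible because the colours of those vertices are already fixed. This is the one step where the freedom built into the definition of $V_{i,j}^\alpha$ is essential; everything else reduces to routine bookkeeping once (A1) and Claim~\ref{claim7}(iii) are in hand.
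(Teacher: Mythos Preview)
Your argument is correct and follows essentially the same route as the paper: colour the sets using the freedom in their definition, invoke Claim~\ref{claim7}(iii) to get safety of the newly coloured vertices in $P^1\cup P^3\setminus(P_I\cup P_{II})$, bound $|C_4|$ via $|C_3|+|U|+|V|$, and deduce (iii) from (A1) together with Property~\ref{lemma4}(iii)--(iv). The one place where you are actually more careful than the paper is your treatment of $V_{i,j}^\alpha$: the paper asserts $U_{i,j}^\alpha\cup V_{i,j}^\alpha\subseteq P^1\cup P^3\setminus(P_I\cup P_{II})$ and proceeds as if $V_{i,j}^\alpha$ were entirely uncoloured, whereas the stated definition $V_{i,j}^\alpha\subseteq V(P_{i,j}^\alpha)\setminus(C_3'\setminus(P_I\cup P_{II}))$ does not rule out vertices of $P_I\cup P_{II}$; your device of capping $|V_{i,j}^\alpha\cap P_I|$ and $|V_{i,j}^\alpha\cap P_{II}|$ at $k+l+2$ (and then topping up) is a clean way to make the balanced colouring in (i) genuinely achievable without recolouring anything.
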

\begin{proof}
Recall that $|U^\alpha_{i,j}|=|V^\alpha_{i,j}|=2(k+l+2)$, so we can color the vertices in $U_{i,j}^\alpha \cup V_{i,j}^\alpha$ such that (i) holds (see Fig. \ref{fig5}). Moreover, $U^\alpha_{i,j} \cup V^\alpha_{i,j} \subseteq P^1 \cup P^3\setminus (P_I \cup P_{II})$, this together with Claim \ref{claim7} (iii), (\ref{12}) and (\ref{14}) implies that all colored vertices in $U_{i,j}^\alpha \cup V_{i,j}^\alpha$ are safe, and $|C_4|\leq 4\cdot 10^4(k+l)f(k,l)$. Hence (ii) holds, and then (iii) follows from (i), (ii), (A1) and Property \ref{lemma4} (iii)-(iv). This completes the proof of Claim \ref{claim5}.
\end{proof}
\vspace{-5mm}
\begin{figure}[H]
\centering    
    \includegraphics[scale=0.65]{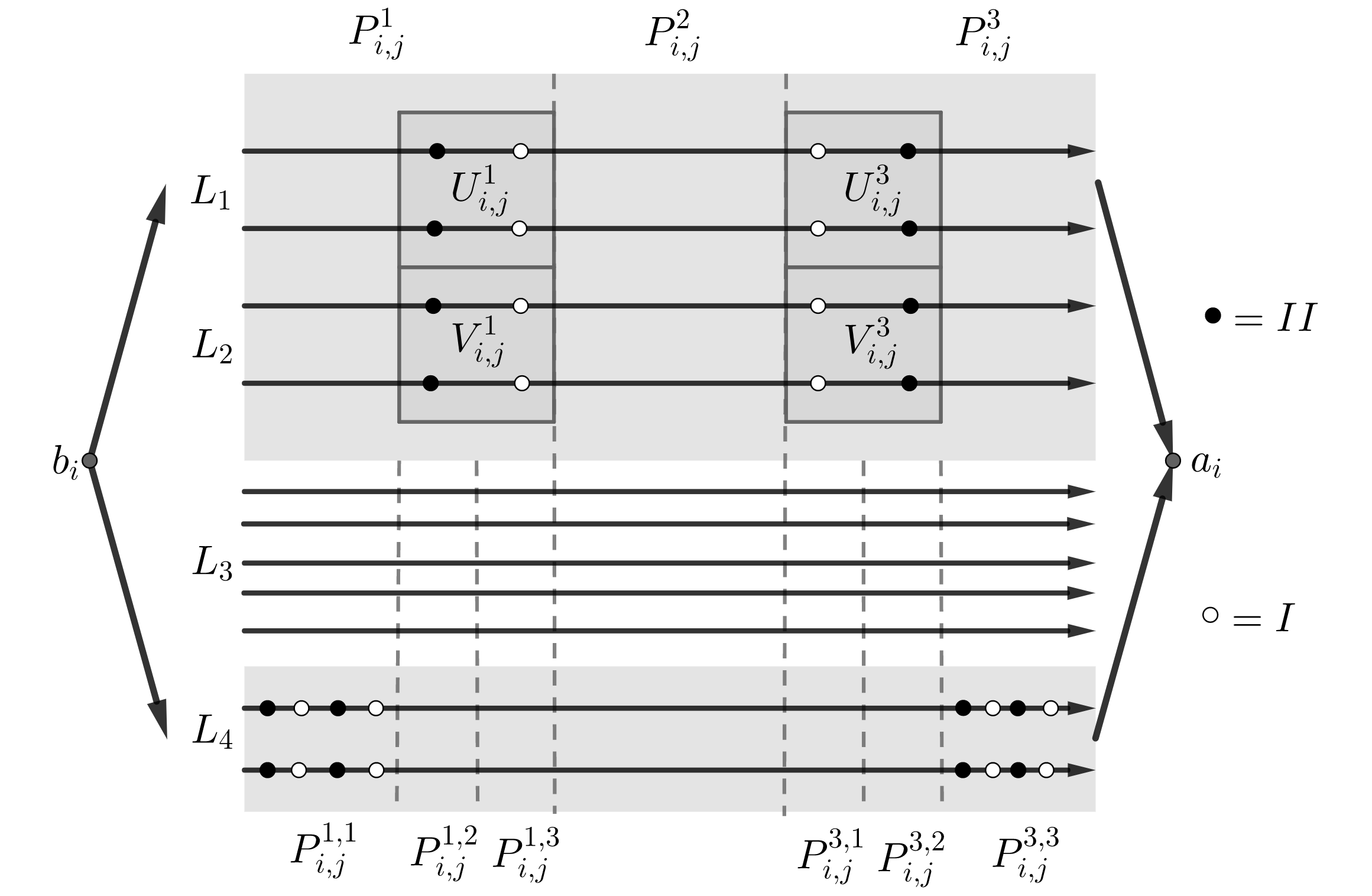}   
\caption{ The structure of $P_{i,j}$ with $(i,j)\in L\times L_0$.} 
\label{fig5}  
\end{figure}

Recall that for each $i\in L$ there are $800 f(k,l)$ internally disjoint $i$-paths, Claims \ref{claim4} and \ref{claim7} (ii) state that $|C_3^\prime| \leq 760f(k,l)$. This implies that for all $i\in L$ there exist at least $40f(k,l)$ internally disjoint $i$-paths whose internal vertices avoid $C_4$. Next, for all $i\in L$ we will use $5l$ such $i$-paths to construct one correct $i$-path; assume they are $P_{i,1},\dots, P_{i,5l}$.

For all $(i,j)\in L\times L_0 \setminus(L_1 \cup L_2)\}$ and all $\alpha\in \{1,3\}$, let $P^{\alpha,1}_{ i,j}, P^{\alpha,2}_{ i,j}, P^{\alpha,3}_{ i,j}$  be
disjoint segments of $P^\alpha_{i,j}$ so that $P^\alpha_{i,j} = P^{\alpha,1}_{i,j}P^{\alpha,2}_{i,j}P^{\alpha,3}_{i,j}$ (see Fig. \ref{fig5}), and
$$|P^{\alpha,1}_{i,j}|=|P^{\alpha,3}_{i,j}|=2(k+l+2).$$
By (\ref{13}), we obtain
\begin{equation}\label{17}
 |P^{\alpha,2}_{i,j}|\geq 4(k+l+1).
\end{equation}
Further, let
\begin{equation*}
  \begin{aligned}
  L_3=\{(i,j):(i,j)\in L\times \{1,\ldots,5l\}\} \text{ and }
  L_4=\{(i,j):(i,j)\in L\times L_0 \setminus(L_1 \cup L_2 \cup L_3)\}.
  \end{aligned}
\end{equation*}

Analysis similar to that in the proof of Claim \ref{claim5} shows the following statements hold.
\begin{claim}\label{claim9}
For all $(i,j)\in L_4$ we may color all vertices in $V(P_{i,j}^{1,1} \cup P_{i,j}^{3,3})$ such that

\noindent (i) the color of the vertices on $P_{i,j}^{1,1}$ (resp. $P_{i,j}^{3,3}$) alternates along the orientation of $P_{i,j}^{1,1}$ (resp. $P_{i,j}^{3,3}$) (see Fig. \ref{fig5}), and all colored vertices are safe,

\noindent (ii) all vertices in $V(P_{i,j})\setminus V(P_{i,j}^{1,1} \cup P_{i,j}^{3,3})$  will be safe with whichever color.
\end{claim}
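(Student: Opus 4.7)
The plan is to imitate the proof of Claim \ref{claim5}, replacing the role of the sets $U_{i,j}^\alpha \cup V_{i,j}^\alpha$ by the segments $P_{i,j}^{1,1}$ and $P_{i,j}^{3,3}$. The first observation is that, because $(i,j)\in L_4$ means $(i,j)\notin L_1\cup L_2$, the internal vertices of $P_{i,j}$ are disjoint from both $P_I\cup P_{II}$ and $C_3^\prime\setminus(P_I\cup P_{II})$. In particular $V(P_{i,j}^{1,1}\cup P_{i,j}^{3,3})\subseteq (P^1\cup P^3)\setminus(P_I\cup P_{II})$ and is currently uncolored.

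Since each of $|P_{i,j}^{1,1}|$ and $|P_{i,j}^{3,3}|$ equals $2(k+l+2)$, I would colour the vertices of $P_{i,j}^{1,1}$ and $P_{i,j}^{3,3}$ alternately along the orientation of $P_{i,j}$. This places exactly $k+l+2$ vertices of each colour in each of $P_{i,j}^{1,1}$ and $P_{i,j}^{3,3}$. Claim \ref{claim7} (iii) then guarantees that every vertex of $(P^1\cup P^3)\setminus(P_I\cup P_{II})$ is safe with whichever colour it receives, so the newly coloured vertices are safe; this establishes (i).

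For (ii), let $v=v_{i,j}^t$ be any vertex of $V(P_{i,j})\setminus V(P_{i,j}^{1,1}\cup P_{i,j}^{3,3})$; the two endpoints $a_i, b_i$ are already safe by Claim \ref{claim2}, so I may assume $v$ is internal and hence lies strictly between $P_{i,j}^{1,1}$ and $P_{i,j}^{3,3}$ on $P_{i,j}$. By (A1), $v$ dominates $V(P_{i,j}^{1,1})\setminus(\overline{N}(v)\cup\{v^{t-1}\})$ and is dominated by $V(P_{i,j}^{3,3})\setminus(\overline{N}(v)\cup\{v^{t+1}\})$. Using $|\overline{N}(v)|\leq l$, this gives at least $(k+l+2)-(l+1)=k+1$ out-neighbours of each colour in $P_{i,j}^{1,1}$ and at least $k+1$ in-neighbours of each colour in $P_{i,j}^{3,3}$. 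Since these vertices have just been declared safe, Property \ref{lemma4} (iii)-(iv) forces $v$ to satisfy (s1)--(s4) regardless of its eventual colour, establishing (ii).

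The main technical point is the colour-balance bookkeeping in the last step: the size $|P_{i,j}^{1,1}|=|P_{i,j}^{3,3}|=2(k+l+2)$ is calibrated so that, after discarding the at most $l$ non-neighbours of $v$ together with the path-neighbour $v^{t\pm 1}$, at least $k$ vertices of each colour still remain in each segment to feed Property \ref{lemma4} (iii)-(iv). Everything else proceeds exactly as in Claim \ref{claim5}, so no further delicate argument is needed.
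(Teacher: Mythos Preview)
Your proof is correct and follows exactly the approach the paper intends: the paper simply states that the analysis is similar to Claim~\ref{claim5} and omits the details, and you have supplied those details accurately. In particular, your observation that $(i,j)\notin L_1\cup L_2$ forces $V(P_{i,j}^{1,1}\cup P_{i,j}^{3,3})\subseteq (P^1\cup P^3)\setminus(P_I\cup P_{II})$ (so Claim~\ref{claim7}(iii) applies), and your counting via (A1) that each interior vertex retains at least $k$ out-neighbours and $k$ in-neighbours of each colour in the end-segments, are precisely the two ingredients the analogy with Claim~\ref{claim5} requires.
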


Recall that $P_{i,j}=b_iv_{i,j}^1\cdots v_{i,j}^{|P_{i,j}|-2}a_i$,  for each $i\in [L]$ consider the subdigraph $D_i$ of $D$ induced by $v_{i,j}^{|P_{i,j}|-2k-2l-6}$ (the last vertex of $P^{3,2}_{i,j}$) for $j\in [5l]$. Eq.(\ref{17}) and the fact that $\delta(D)\geq n-l$ shows the following statements hold, where $j_1,j_2,j_3,j_4\in  [5l]$ and $\gamma$ is an even integer.

(B1) The subdigraph $D_i$ has at least two vertices of out-degree at least two; assume they are $v_{i,j_1}^{|P_{i,j_1}|-2k-2l-6}$ and $v_{i,j_2}^{|P_{i,j_2}|-2k-2l-6}$. Further, assume $v_{i,j_2}^{|P_{i,j_2}|-2k-2l-6}$ dominates $v_{i,j_3}^{|P_{i,j_3}|-2k-2l-6}$ and $v_{i,j_1}^{|P_{i,j_1}|-2k-2l-6}$ dominates $v_{i,j_4}^{|P_{i,j_4}|-2k-2l-6}$, where $j_4\neq j_2$ and $j_3\neq j_1$.

(B2) $v_{i,j_1}^{2k+2l+5}$ dominates  $v_{i,j_2}^{2k+2l+4+\gamma}$, or  $v_{i,j_2}^{2k+2l+4+\gamma}$  dominates $v_{i,j_1}^{2k+2l+5}$; assume $v_{i,j_1}^{2k+2l+5}$ dominates $v_{i,j_2}^{2k+2l+6}$.

\begin{claim}\label{claim8}
For each $i\in L$ there is a correct i-path $P_i$ (see Fig. \ref{fig4}) such that

\noindent (i) all vertices in $\text{Int}(P_i)$ have not been colored, and $P_i$ and $P_{i^\prime}$ are disjoint whenever $i\neq i^\prime$,

\noindent (ii) if $i\in L\cap [2k]$, then $P_i = b_iP_{i,j_1}^{1,1}v_{i,j_1}^{2k+2l+5}v_{i,j_2}^{2k+2l+6}\cdots v_{i,j_2}^{|P_{i,j_2}|-2k-2l-6}v_{i,j_3}^{|P_{i,j_3}|-2k-2l-6} P_{i,j_3}^{3,3}a_i$ where $j_1,j_2,j_3\in[5l]$,

\noindent (iii) if $i\in L\cap [2k+1,6k]$, then either $P_i = P_{i,j}$ for some $j\in[5l]$, or $P_i =b_iP_{i,j_1}^{1,1}v_{i,j_1}^{2k+2l+5}v_{i,j_2}^{2k+2l+6}\cdots$ \\ $v_{i,j_2}^{|P_{i,j_2}|-2k-2l-6}v_{i,j_3}^{|P_{i,j_3}|-2k-2l-6} P_{i,j_3}^{3,3}a_i$ where $j_1,j_2,j_3\in[5l]$.
\end{claim}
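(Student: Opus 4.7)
The plan is to construct, for each $i \in L$, a correct $i$-path $P_i$ out of the $5l$ internally disjoint $i$-paths $P_{i,1},\ldots,P_{i,5l}$. Since these were selected from the $40f(k,l)$ internally disjoint $i$-paths whose internal vertices avoid $C_4$, every vertex of every $P_{i,j}$ other than $a_i,b_i$ is currently uncolored. The argument splits according to whether Definition \ref{definition2} constrains the parity of $|P_i|$.

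For $i \in L \cap [2k]$, any parity is admissible for a correct $i$-path. I first apply (B2) to obtain indices $j_1\neq j_2$ in $[5l]$ together with the arc $v_{i,j_1}^{2k+2l+5}v_{i,j_2}^{2k+2l+6}$, and then apply (B1) to select $j_3\in [5l]\setminus\{j_1\}$ such that $v_{i,j_2}^{|P_{i,j_2}|-2k-2l-6}$ dominates $v_{i,j_3}^{|P_{i,j_3}|-2k-2l-6}$. Defining
\begin{equation*}
P_i := b_i P_{i,j_1}^{1,1} v_{i,j_1}^{2k+2l+5} v_{i,j_2}^{2k+2l+6}\cdots v_{i,j_2}^{|P_{i,j_2}|-2k-2l-6} v_{i,j_3}^{|P_{i,j_3}|-2k-2l-6} P_{i,j_3}^{3,3} a_i
\end{equation*}
then produces the exact form required in (ii).

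For $i\in L \cap [2k+1,6k]$ I first check whether some $P_{i,j}$ with $j\in [5l]$ already has the required parity (odd when $i\in [2k+1,4k]$, even when $i\in [4k+1,6k]$). If such a $j$ exists, take $P_i := P_{i,j}$, yielding the first alternative in (iii). Otherwise every $P_{i,j}$ has the wrong parity; in that case I run the same construction as in the previous paragraph. A direct vertex count of the three concatenated segments shows that the constructed path has length exactly $|P_{i,j_2}|+1$, so its parity is the opposite of that of $P_{i,j_2}$ and therefore the required one, giving the second alternative in (iii).

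Property (i) then follows from two observations: first, the collection $\{P_{i,j}\}_{(i,j)\in L\times L_0}$ is internally disjoint and the endpoints $a_i,b_i$ are distinct for distinct $i$, so paths $P_i$ and $P_{i'}$ obtained for $i\neq i'$ are disjoint; second, every internal vertex of each constructed $P_i$ lies inside some $P_{i,j_s}$ with $j_s \in [5l]$, hence avoids $C_4$ and is uncolored. The main obstacle of the proof is reconciling the parity demand of (iii) with the rigid structure forced by (B1) and (B2): since (B2) essentially pins down the specific arc used for the first jump (with $\gamma=2$), no direct ``parity knob'' is available inside the construction, and it is the pigeonhole applied to the $5l$ candidate paths, together with the one-off length shift $|P_{i,j_2}|+1$, that rescues the parity matching.
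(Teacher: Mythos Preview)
Your argument is correct and coincides with the paper's approach: the paper's own proof simply declares (i) and (ii) ``obvious'' and, for (iii), either picks a $P_{i,j}$ that already has the right parity or else takes the constructed path exactly as in (ii), which is precisely what you do, and your explicit check that the constructed path has length $|P_{i,j_2}|+1$ (hence flipped parity) fills in a detail the paper leaves to the figure. The only cosmetic slip is that (B2) is stated for the indices $j_1,j_2$ already fixed by (B1), so ``first apply (B2) to obtain indices $j_1\neq j_2$'' inverts the logical dependence; since (B1) and (B2) are presented as simultaneous facts about one fixed tuple $(j_1,j_2,j_3,j_4)$ this does not affect the validity of your construction.
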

\begin{proof}
Obviously, (i) and (ii) hold. Suppose that $i\in L\cap [2k+1,6k]$. If there exists a correct $i$-path among $P_{i,j}$ where $j\in [5l]$, then
we take it to be $P_i$. Otherwise, let $P_i$ be as described in (iii) (see Fig. \ref{fig4}). This completes the proof of Claim \ref{claim8}.
\end{proof}

\begin{claim}\label{claim10}
For all $(i,j)\in L_3$ we may color all vertices in $V(P_i \cup P^1_{i,j} \cup P^3_{i,j})$ such that

\noindent (i) all colored vertices are safe, and the set $C_5$ consisting of all vertices colored so far has size $|C_5|\leq 4\cdot 10^4(k+l)f(k,l)+n/40$ and
$$|C_5\setminus (\bigcup_{(i,j)\in L_3}V(P_{i,j}^2) \cup \bigcup_{(i,j)\in L_4}V(P_{i,j}^{1,1}\cup P_{i,j}^{3,3}))|\leq 8\cdot 10^4(k+l)f(k,l),$$

\noindent (ii) every vertex in $V(P_{i,j}^2)$ will be safe with whichever color, and

\noindent(iii) for an uncolored vertex $v\notin  P^1 \cup P^2 \cup P^3$, if it has an out-neighbor in  $V(P_{i,j}^2)$, then $v$ will satisfy  (s1) and (s3) with whichever color; If it has an in-neighbor in $V(P_{i,j}^2)$, then $v$ will satisfy  (s2) and (s4) with whichever color.
\end{claim}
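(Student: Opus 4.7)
The plan is to first colour $V(P_i)$ for every $i\in L$ and $V(P^1_{i,j}\cup P^3_{i,j})$ for every $(i,j)\in L_3$ in agreement with the parity constraints inherited from the coloring of $D_0$, and then to derive safety from a combination of Claim \ref{claim7}(iii), the dominance properties (A1)--(A2), and the propagation principle of Property \ref{lemma4}(iii)--(iv). Concretely, for $i\in L\cap[2k]$ every vertex of $P_i$ is given the common colour of $a_i$ and $b_i$; for $i\in L\cap[2k+1,6k]$ the vertices of $P_i$ are coloured alternately starting at $b_i$, which, because $P_i$ is correct, ends at the colour pre-assigned to $a_i$. For each $(i,j)\in L_3$ the still-uncoloured vertices of $V(P^1_{i,j}\cup P^3_{i,j})$ are coloured alternately, in a way compatible with $P_i$ on the overlap (the short segments $P^{1,1}_{i,j_1}$, $P^{3,3}_{i,j_3}$ and the jump vertex $v_{i,j_1}^{2k+2l+5}$).

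Safety is verified in three layers. First, since the newly coloured vertices in $V(P^1_{i,j}\cup P^3_{i,j})$, together with the ends $P^{1,1}_{i,j_1}$, $P^{3,3}_{i,j_3}$ of each $P_i$, all lie in $P^1\cup P^3\setminus(P_I\cup P_{II})$, Claim \ref{claim7}(iii) makes them safe in whatever colour they were assigned; the small subset coming from $L_3\cap L_1$ is treated by one invocation of Claim \ref{claim1} with $C=C_4$ and $l'=O((k+l)f(k,l))$. Second, for the long middle of each $P_i$---the vertices $v_{i,j_2}^{2k+2l+6},\ldots,v_{i,j_2}^{|P_{i,j_2}|-2k-2l-6}$ on $P_{i,j_2}$---property (A1) says each such $v_{i,j_2}^t$ dominates $V(b_iP_{i,j_2}v_{i,j_2}^{t-2})\setminus\overline{N}(v_{i,j_2}^t)$ and is dominated by $V(v_{i,j_2}^{t+2}P_{i,j_2}a_i)\setminus\overline{N}(v_{i,j_2}^t)$; since $|\overline{N}(v_{i,j_2}^t)|\le l-1$ and $P_i$ is coloured alternately or uniformly, $v_{i,j_2}^t$ has at least $k$ safe same-colour out- and in-neighbours in the already safe $P^1_{i,j_2}\cup P^3_{i,j_2}$ and the rest of $P_i$, so Property \ref{lemma4}(iii)--(iv) propagates (s1)--(s4) through the entire interior. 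Third, the three jump vertices are handled identically, since $P^{1,1}_{i,j_1}$ and $P^{3,3}_{i,j_3}$ each contain at least $k$ safe vertices of each colour.

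Parts (ii) and (iii) then follow immediately from (A1), (A2), and Property \ref{lemma4}(iii)--(iv). For (ii), an uncoloured $v\in V(P^2_{i,j})$ with $(i,j)\in L_3$ has, by (A1), at least $k$ safe out- and in-neighbours of each colour in the alternately coloured $P^1_{i,j}\cup P^3_{i,j}$, so either colour assigned to $v$ yields safety. For (iii), if $v\notin P^1\cup P^2\cup P^3$ is uncoloured and has an out-neighbour in $V(P^2_{i,j})$, then (A2) produces at least $k$ out-neighbours of each colour on $P^1_{i,j}\cup P^2_{i,j}$ that are or will be safe, giving (s1) and (s3); the in-neighbour case is symmetric. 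For the size bounds, the newly coloured vertices lie in $\{a_i,b_i:i\in L\}\cup\bigcup_{(i,j)\in L\times L_0}\text{Int}(P_{i,j})$, whose cardinality is at most $12k+n/40$ by (\ref{16}), so combined with $|C_4|\le 4\cdot10^4(k+l)f(k,l)$ and the $O((k+l)f(k,l))$ from the one application of Claim \ref{claim1} we obtain $|C_5|\le 4\cdot10^4(k+l)f(k,l)+n/40$. The second bound is obtained by removing $\bigcup_{(i,j)\in L_3}V(P^2_{i,j})\cup\bigcup_{(i,j)\in L_4}V(P^{1,1}_{i,j}\cup P^{3,3}_{i,j})$, after which only $C_4$, the short ends of each $P_i$, and the segments $V(P^1_{i,j}\cup P^3_{i,j})$ of total length $O(l\,f(k,l))$ remain, summing to at most $8\cdot10^4(k+l)f(k,l)$.

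The main obstacle will be orchestrating the safety propagation across the two jump edges inside each constructed $P_i$, namely $v_{i,j_1}^{2k+2l+5}v_{i,j_2}^{2k+2l+6}$ and $v_{i,j_2}^{|P_{i,j_2}|-2k-2l-6}v_{i,j_3}^{|P_{i,j_3}|-2k-2l-6}$, because (A1) only delivers rich dominance within a single $P_{i,j}$. One must carefully anchor the induction at $P^{1,1}_{i,j_1}$ and $P^{3,3}_{i,j_3}$, each of which, being of length $2(k+l+2)$, contains enough vertices of each colour, and align this with the parity prescribed by the definition of a correct $i$-path---this compatibility is precisely why the parity requirement in Definition \ref{definition2} was imposed.
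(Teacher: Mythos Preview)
Your approach is essentially the paper's: colour $P_i$ appropriately, colour the remainder of $P^1_{i,j}\cup P^3_{i,j}$ for $(i,j)\in L_3$ so that each colour is well represented there, invoke Claim~\ref{claim7}(iii) for safety on the $P^1\cup P^3$ portion, and then propagate safety to the $P^2$ portion and to outside vertices via (A1)--(A2) and Property~\ref{lemma4}(iii)--(iv). The size bounds are also obtained in the same way.

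Two of your precautions are unnecessary. First, the paths indexed by $L_3$ were explicitly chosen (in the paragraph immediately preceding the definition of $L_3$) to have internal vertices avoiding $C_4\supseteq C_3'\supseteq P_I\cup P_{II}$, so effectively $L_3\cap L_1=L_3\cap L_2=\emptyset$ and no auxiliary invocation of Claim~\ref{claim1} is required; the paper's proof of this claim stays entirely inside $C_3\cup P^1\cup P^2\cup P^3$. Second, the jump vertices you single out in your final paragraph, namely $v_{i,j_1}^{2k+2l+5}$, $v_{i,j_2}^{2k+2l+6}$, $v_{i,j_2}^{|P_{i,j_2}|-2k-2l-6}$ and $v_{i,j_3}^{|P_{i,j_3}|-2k-2l-6}$, all lie in $P^1\cup P^3\setminus(P_I\cup P_{II})$, so Claim~\ref{claim7}(iii) makes them safe directly; no separate anchoring or parity bookkeeping is needed there. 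Finally, the paper uses a block colouring of $P^{1,2}_{i,j},P^{1,3}_{i,j},P^{3,1}_{i,j},P^{3,2}_{i,j}$ (monochromatic in $I$ or $II$) rather than your alternating pattern, but this is cosmetic: either scheme guarantees at least $k$ safe out-neighbours of each colour in $P^1_{i,j}$ and at least $k$ safe in-neighbours of each colour in $P^3_{i,j}$ for every vertex of $P^2_{i,j}$, which is all that parts (ii) and (iii) require.
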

\begin{proof}
First we color the vertices in $P_i$ for each $i\in L$. If $i\in L\cap [k]$, we color the vertices on $P_i$ by $I$; If $i\in L\cap [k+1,2k]$, we color the vertices on $P_i$ by $II$; If $i\in L\cap [2k+1,6k]$, we color the vertices on $P_i$ alternating along the orientation of $P_i$.

 For each $(i,j)\in L_3$, color all vertices in $V(P_{i,j}^{1,1}\cup P_{i,j}^{3,3})\setminus V(P_i)$ such that there are $k+l+2$ vertices colored by $I$, and $k+l+2$ vertices colored by $II$ in each of $V(P_{i,j}^{1,1})$ and $V(P_{i,j}^{3,3})$ (see Fig. \ref{fig4}). We color all vertices in $V(P_{i,j}^{1,2}\cup P_{i,j}^{3,2})\setminus V(P_i)$ by $I$, and color all vertices in $V(P_{i,j}^{1,3}\cup P_{i,j}^{3,1})\setminus V(P_i)$ by $II$ (see Fig. \ref{fig4}).

It follows from (A1) that each vertex in $V(P^2_{i,j})$ has at least $k$ out-neighbors in each color in $V(P_{i,j}^{1} )$, and at least $k$ in-neighbors in each color in $V(P_{i,j}^{3})$.  By Claim \ref{claim7} (iii), all colored vertices in $P^1 \cup P^2$ are safe. This together with Property \ref{lemma4} (iii)-(iv) yields that (ii) holds, and all colored vertices are safe. By (A2) and Property \ref{lemma4} (iii)-(iv), we obtain that (iii) holds.

 Owing to Claims \ref{claim7}--\ref{claim9}, all colored vertices belong to $C_3 \cup P^1 \cup P^2 \cup P^3$. It follows from (\ref{16}) that $|C_5|\leq |C_3\cup P^1 \cup P^2 \cup P^3|\leq 3\cdot 10^4(k+l)f(k,l)+n/40$. Further, we have
\begin{equation*}
  \begin{aligned}
  &|C_5\setminus (\bigcup_{(i,j)\in L_3}V(P_{i,j}^2) \cup \bigcup_{(i,j)\in L_4}V(P_{i,j}^{1,1}\cup P_{i,j}^{3,3}))|=|C_5|+|\bigcup_{(i,j)\in L_3}V(P_{i,j}^1\cup P_{i,j}^3)|\\
 \overset{(\ref{13})}{\leq}& 4\cdot 10^4(k+l)f(k,l)+6k\cdot 5l\cdot 1200(k + l)\log(2kl)\\
  \leq& 8\cdot 10^4(k+l)f(k,l).
  \end{aligned}
\end{equation*}
This completes the proof of Claim \ref{claim10}.
\end{proof}
\vspace{-3mm}
 \begin{figure}[H]
\centering    
    \includegraphics[scale=0.9]{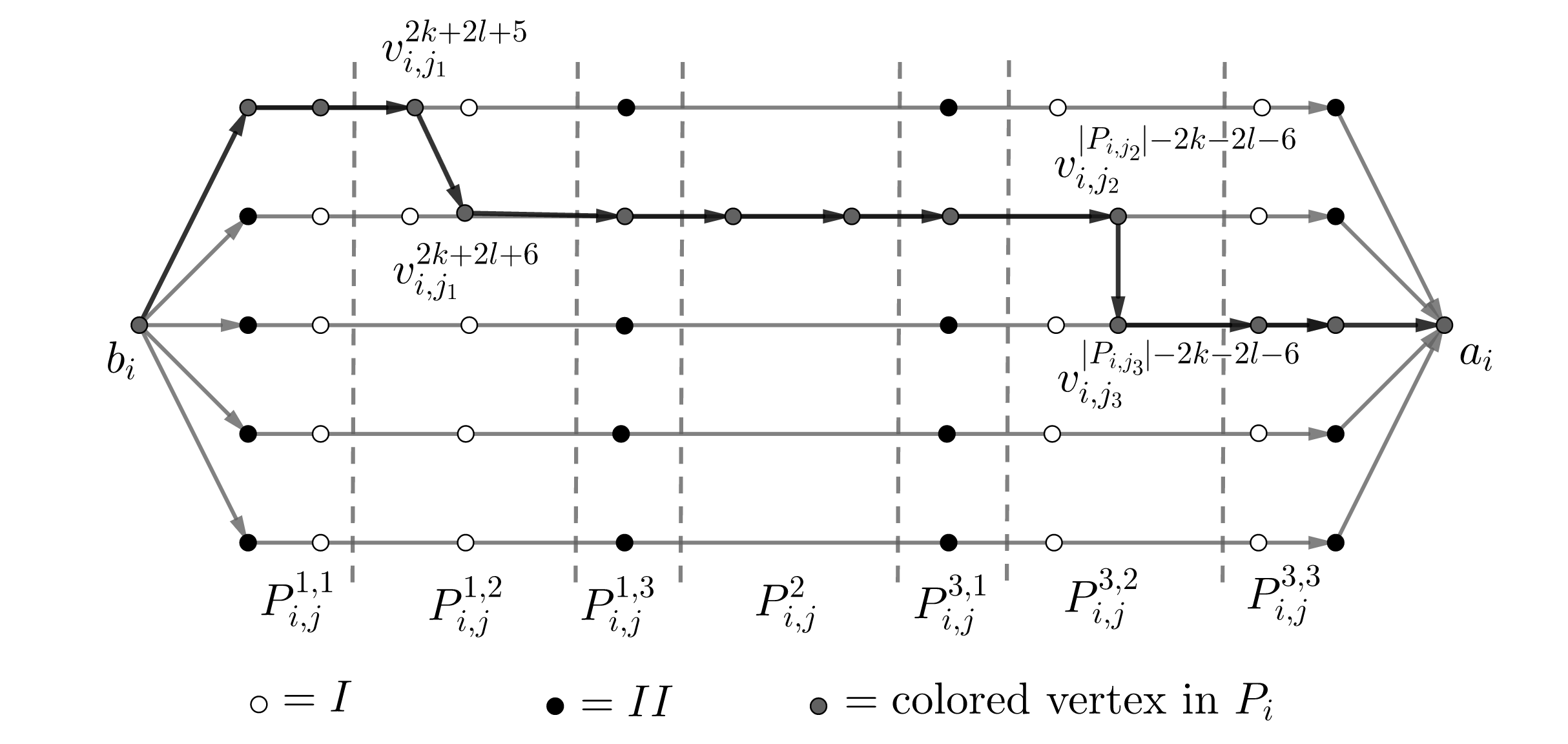}   
\caption{Color patterns of the paths int($P_i$) with $i \in L\cap  [6k]$ and $l=1$. The black arrows indicate $P_i$.} 
\label{fig4}  
\end{figure}

\subsection{Coloring of the vertices in $E$}
In this subsection, we will distribute these vertices in exceptional sets into $V_1$ and $V_2$ so that these vertices are safe. Let
$W=\{w: w\in \bigcup_{(i,j)\in L\times L_0}V(P_{i,j})\setminus C_5\}$.
According to Claims \ref{claim7} (iii), \ref{claim5} (iii), \ref{claim9} (ii) and \ref{claim10} (ii), we obtain the following holds.
\begin{claim}\label{claim12}
All vertices in  $W$  will be safe with whichever color.
\end{claim}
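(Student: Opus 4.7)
The plan is to prove Claim \ref{claim12} by a direct case analysis, aggregating the ``safe with whichever color'' conclusions already established in Claims \ref{claim7}(iii), \ref{claim5}(iii), \ref{claim9}(ii), and \ref{claim10}(ii). No new structural argument is required; the only work is to check that the vertices colored into $C_5$ cover every portion of the $P_{i,j}$ not already handled directly by one of these earlier conclusions, so that each $w \in W$ falls inside the scope of exactly one of them.

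First I would dispose of the vertices of $W$ lying in $P^1 \cup P^3$. By construction $C_5$ absorbs $P_I \cup P_{II}$ (colored in Claim \ref{claim7}), the sets $U \cup V$ (colored in Claim \ref{claim5}), all of $P^{1,1}_{i,j} \cup P^{3,3}_{i,j}$ for $(i,j) \in L_4$ (colored in Claim \ref{claim9}), and all of $P^1_{i,j} \cup P^3_{i,j}$ for $(i,j) \in L_3$ (colored in Claim \ref{claim10}). Therefore any uncolored $w \in W \cap (P^1 \cup P^3)$ must lie in $P^1 \cup P^3 \setminus (P_I \cup P_{II})$, and Claim \ref{claim7}(iii) states exactly that such a $w$ is safe with whichever color.

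Next I would treat $w \in W \cap P^2$ by selecting the unique $(i,j)$ with $w \in V(P^2_{i,j})$ and splitting along the covering $L \times L_0 = (L_1 \cup L_2) \cup L_3 \cup L_4$. For $(i,j) \in L_1 \cup L_2$, the sets $U^{\alpha}_{i,j}$ and $V^{\alpha}_{i,j}$ are contained in $P^{\alpha}_{i,j}$ for $\alpha \in \{1,3\}$, so $V(P^2_{i,j}) \cap (U \cup V) = \emptyset$ and Claim \ref{claim5}(iii) applies directly. For $(i,j) \in L_3$, Claim \ref{claim10}(ii) gives the conclusion immediately, and for $(i,j) \in L_4$ the containment $V(P^2_{i,j}) \subseteq V(P_{i,j}) \setminus V(P^{1,1}_{i,j} \cup P^{3,3}_{i,j})$ lets Claim \ref{claim9}(ii) conclude.

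I do not anticipate a real obstacle here, since the argument is essentially bookkeeping. The only point one must be careful about is that the index sets $L_1, L_2, L_3$ are not pairwise disjoint; however, each of the cited statements is monotone in its vertex argument, so any overlap only yields redundant safety guarantees and the case analysis remains valid.
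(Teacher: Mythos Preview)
Your proposal is correct and follows exactly the paper's approach: the paper proves Claim~\ref{claim12} in one sentence by citing Claims~\ref{claim7}(iii), \ref{claim5}(iii), \ref{claim9}(ii), and \ref{claim10}(ii), and your case analysis simply unpacks why those four citations cover every vertex of $W$. Your observation that $P_I\cup P_{II}\subseteq C_5$ already suffices for the $P^1\cup P^3$ case (making the rest of that paragraph redundant), and your check that $U\cup V\subseteq P^1\cup P^3$ so that Claim~\ref{claim5}(iii) applies to all of $V(P^2_{i,j})$ for $(i,j)\in L_1\cup L_2$, are exactly the verifications the paper leaves implicit.
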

Finally, it suffices to deal with the uncolored vertices in $E\setminus W$.

\begin{claim}\label{claim11}
We may color all vertices in $E\setminus W$ such that all colored vertices are safe, and the set $C_6$ consisting of all vertices colored so far satisfies $|C_6|\leq n/20$.
\end{claim}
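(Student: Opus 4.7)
The plan is to color each uncolored $v \in E \setminus W$ by invoking Claim~\ref{claim10}(iii) as the main safety certificate. First I would collect the structural facts: any such $v$ satisfies $v \notin C_5$ and $v \notin W$, and the definition of $W$ then forces $v \notin \bigcup_{(i,j) \in L \times L_0} V(P_{i,j})$; in particular $v \notin P^1 \cup P^2 \cup P^3$. Since $X \cup Y \subseteq C_1 \subseteq C_5$ are already colored by Claim~\ref{claim2}, we have $v \notin X \cup Y$, and therefore $d^+(v) \ge \hat\delta^+(D)$ and $d^-(v) \ge \hat\delta^-(D) \ge 10^7 k(k+l)^2\log(2kl)$, together with $|N^+(v) \cup N^-(v)| \ge n-l$ from the hypothesis $\delta(D) \ge n-l$.

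The technical core is to show that each such $v$ has both an out-neighbor and an in-neighbor inside $P^2_{L_3} := \bigcup_{(i,j) \in L_3} V(P^2_{i,j})$. Once this is established, Claim~\ref{claim10}(ii)--(iii) certifies that $v$ satisfies (s1)--(s4) under whichever color we assign to it, since every vertex in each $V(P^2_{i,j})$ will itself be safe under whichever color, and the hypothesis of (iii) propagates (s1)--(s3) and (s2)--(s4) to $v$. I plan to obtain the neighbor property by pigeonhole: the individual lengths $|P^2_{i,j}|$, lower-bounded by $|P_{i,j}| - 2 - 1200(k+l)\log(2kl)$ via (\ref{13}) and Claim~\ref{claim3}(ii), summed across the $5l|L|$ paths in $L_3$, together with the facts that at most $l$ vertices of any set lie outside $N^+(v) \cup N^-(v)$ and that $d^{\pm}(v)$ is huge, force $v$'s out- and in-neighborhoods each to meet $P^2_{L_3}$. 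I would then assign each such $v$ a color arbitrarily (splitting roughly evenly between $I$ and $II$ to preserve freedom for the later $n_1, n_2$ constraints) and note that no previously safe vertex in $C_5$ loses its safety, since only fresh safe neighbors are being added.

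For the size bound I would combine Claim~\ref{claim10}(i) with (\ref{2}) to get
\[
|C_6| = |C_5| + |E \setminus W| \le \Bigl(\tfrac{n}{40} + 4 \cdot 10^4 (k+l)f(k,l)\Bigr) + \tfrac{\hat\delta^+(D)}{1000k} \le \tfrac{n}{20},
\]
where the $f(k,l)$ term is absorbed because the connectivity hypothesis forces $n$ large relative to $k,l$, and $\hat\delta^+(D)/(1000k) \le n/(1000k)$ is dominated.

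The principal obstacle is the neighbor-in-$P^2_{L_3}$ step in degenerate regimes where $|P^2_{L_3}|$ might be small: in the extreme case $L = \emptyset$ one even has $P^2_{L_3} = \emptyset$ and Claim~\ref{claim10}(iii) becomes vacuous. In that situation I would fall back to directly selecting $4k$ uncolored ``good'' neighbors of $v$ inside $V(D) \setminus (D_0 \cup E \cup C_5)$ (disjointly across different $v \in E \setminus W$), colouring them so that $v$ has $k$ in-neighbors and $k$ out-neighbors of each color, and invoking Property~\ref{lemma4}(i) to conclude that these good colored vertices are automatically safe and Property~\ref{lemma4}(iii)--(iv) to transfer safety to $v$. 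The complement of the available good uncolored set has size at most $|C_5| + |D_0| + |E| = O(n/40) + O((k+l)f(k,l)) + \hat\delta^+(D)/(1000k)$, which is far below $d^\pm(v) \ge \hat\delta^\pm(D)$, so the needed $4k|E \setminus W|$ reservations can be made greedily, and since $4k|E| \le 4k \cdot \hat\delta^+(D)/(1000k) \le n/250$ the total still lies inside the $n/20$ budget.
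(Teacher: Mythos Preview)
Your primary strategy---showing that every uncolored $v\in E\setminus W$ has both an out-neighbor \emph{and} an in-neighbor in $P^2_{L_3}:=\bigcup_{(i,j)\in L_3}V(P^2_{i,j})$---does not go through. The hypothesis $\delta(D)\ge n-l$ only guarantees that at most $l$ vertices of $P^2_{L_3}$ are non-adjacent to $v$; it says nothing about the \emph{direction} of the remaining arcs. It is perfectly consistent with (A1), (A2) and all the degree bounds that every vertex of $P^2_{L_3}$ is an out-neighbor of $v$ and none is an in-neighbor. In that case Claim~\ref{claim10}(iii) delivers only (s1) and (s3), never (s2) or (s4), and $v$ is not safe.

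Your fallback has a second, independent gap. You propose to find $2k$ uncolored in-neighbors of $v$ inside $V(D)\setminus(D_0\cup E\cup C_5)$, but by~(\ref{2}) the bound $|E_B|\le \hat\delta^+(D)/(2000k)$ is controlled by $\hat\delta^+(D)$, not by $\hat\delta^-(D)$. Under the standing assumption $\hat\delta^+(D)\ge\hat\delta^-(D)$ made after (P4), one may have $\hat\delta^-(D)$ equal to the connectivity bound $10^7k(k+l)^2\log(2kl)$ while $\hat\delta^+(D)$ is of order $n$; then $|E_B|$ (hence $|E|$) can exceed $\hat\delta^-(D)$, and the entire in-neighborhood of $v$ may sit inside $D_0\cup E\cup C_5$, so no such greedy reservation is possible. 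The paper avoids this by treating $E_A$ and $E_B$ \emph{asymmetrically} in two steps: for $v\in E_A$ it seeks in-neighbors only outside $E_A$ (not outside $E$), which is feasible since $|E_A|\le \hat\delta^-(D)/(2000k)$; these in-neighbors already satisfy (s2) and (s4) by Property~\ref{lemma4}(ii) even if they lie in $E_B$, and that suffices because $v\in E_A\setminus E_B$ automatically has (s1), (s3). A three-way case split---an in-neighbor in $\bigcup_{L_3}V(P^2_{i,j})$, or $2k$ uncolored in-neighbors outside $E_A$, or many in-neighbors in $\bigcup_{L_4}V(P^{1,1}_{i,j}\cup P^{3,3}_{i,j})$---covers all possibilities, with an auxiliary set $Z_A$ of size at most $2k|E_A|$ recording the newly colored helpers. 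A symmetric second step then treats $E_B$ using out-neighbors, and the final count $|C_5|+|E|+|Z|\le n/20$ follows.
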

\begin{proof}
We divide the proof of this claim into the following two steps. We first deal with all vertices in $E_A \setminus (C_5\cup W)$ (see Step 1) such that all the vertices in $E_A \setminus (C_5\cup W)$ satisfy (s2) and (s4). Property \ref{lemma4} (ii) yields that all vertices in $E_A\setminus E_B$ satisfy (s1) and (s3), and all vertices in $E_B\setminus E_A$  satisfy (s2) and (s4). After Step 1, all vertices in $E_A\setminus E_B$ are safe, and all vertices in $E_A\cap E_B$ satisfy (s2) and (s4). This implies that all vertices in $E_B$ satisfy (s2) and (s4). At Step 2, we color each vertex in $E_B \setminus (C_5\cup W)$ such that all the vertices in $E_B \setminus (C_5\cup W)$ satisfy (s1) and (s3). Then all vertices in $E \setminus (C_5\cup W)$ are safe.

\textbf{Step 1.} We can color each vertex in $E_A \setminus (C_5\cup W)$ in turn as well as some set $Z_A$ of additional vertices such that all the vertices in $E_A \setminus (C_5\cup W)$ satisfy (s2) and (s4). At first, let $Z_A =\emptyset$. At each  iteration, we
will update $Z_A$ such that

(a) $Z_A$ consists of colored vertices and $Z_A\cap (C_5 \cup E_A) = \emptyset$,

 (b) every colored vertex lies in $C_5 \cup E_A \cup  Z_A$,

 (c) $|Z_A| \leq  2k|E_A|$.

Now suppose that we have already colored some vertices in $E_A \setminus (C_5\cup W)$ such that $Z_A$ satisfies (a)-(c). It follows from (\ref{1}), (\ref{2}) and Claim \ref{claim10} (i) that
\begin{align}\label{7}
 &\hat{\delta}^-(D)-|E_A\cup Z_A|- |C_5\setminus (\bigcup_{(i,j)\in L_3}V(P_{i,j}^2) \cup \bigcup_{(i,j)\in L_4}V(P_{i,j}^{1,1}\cup P_{i,j}^{3,3}))|\nonumber \\
 \geq &\hat{\delta}^-(D) -3k|E_A| - |C_5\setminus (\bigcup_{(i,j)\in L_3}V(P_{i,j}^2) \cup \bigcup_{(i,j)\in L_4}V(P_{i,j}^{1,1}\cup P_{i,j}^{3,3}))|\nonumber\\
 \geq &2500(k+l)f(k,l)+ 2k.
\end{align}
 Since all colored vertices lie in $C_5\cup  E_A \cup Z_A$, eq.(\ref{7}) implies that for a vertex $v\in E_A \setminus (C_5\cup W)$, it satisfies one of the following statements.

(D1) It has at least an in-neighbor in $\bigcup_{(i,j)\in L_3}V(P_{i,j}^2)$.

 (D2) It has at least $2k$ uncolored in-neighbors $v_1, v_2, \ldots , v_{2k}$ outside $E_A$.

  (D3) It has at least $ 2500k(k+l)^2\log(2kl)$ in-neighbors in $\bigcup_{(i,j)\in L_4}V(P_{i,j}^{1,1}\cup P_{i,j}^{3,3})$.

 First we consider case (D1). Claim \ref{claim10} (iii) yields that $v$ satisfies (s2) and (s4).  We color $v$ with $I$, and $Z_A$ stays the same. Then we move to (D2).  We color $k$ of $v_1, v_2, \ldots , v_{2k}$ by $I$ and $k$ of them by $II$, and replace $Z_A$ by $Z_A\cup  \{v_1, v_2, \ldots , v_{2k}\}$. We also color $v$ with $I$. It follows from Property \ref{lemma4} (ii) and (iv) that all vertices in $\{v,v_1, v_2, \ldots , v_{2k}\}$ satisfy (s2) and (s4).

Finally, we turn to (D3). Recall that $|P_{i,j}^{3,3}|=2(k+l+2)$, so for all $(i,j)\in L_4$ let $(i,j)_l$ denotes the vertex in $V(P_{i,j}^{3,3})$ where $l\in [2k+2l+4]$. Moreover, write
$$P_{i,j}^{3,3}=(i,j)_{2k+2l+4}(i,j)_{2k+2l+3}\cdots (i,j)_{2}(i,j)_{1}.$$
Suppose that the vertex $(i,j)_{l_{i,j}}$ is the first vertex in $V(P_{i,j}^{3,3})$ such that $(i,j)_{l_{i,j}}v\in A(D)$ along the orientation of $P_{i,j}^{3,3}$. Claim \ref{claim9} (i) states that the color of the vertices in $V(P_{i,j}^{3,3})$  alternates along the orientation of $P_{i,j}^{3,3}$. Since $\delta(D)\geq n-l$, it follows from (A2) that there are at least $\lfloor l_{i,j}/2\rfloor -l-2$ vertices of color $I$ belonging to the in-neighborhood of $v$, and at least $\lfloor l_{i,j}/2\rfloor -l-2$ vertices of color $II$ belonging to the in-neighborhood of $v$. We see that if
$\sum_{(i,j)\in L_4}(\lfloor l_{i,j}/2\rfloor -l-2)\geq k$, then by Claim \ref{claim9} (ii) and Property \ref{lemma4} (iii)-(iv), the vertex $v$ satisfies (s2) and (s4). Thus $\sum_{(i,j)\in L_4}(\lfloor l_{i,j}/2\rfloor -l-2)\leq k-1$. This implies
$\sum_{(i,j)\in L_4}l_{i,j}<  2500(k+l)f(k,l)$. That is, the in-degree of $v$ in $\bigcup_{(i,j)\in L_4}V(P_{i,j}^{3,3})$ is less than $2500k(k+l)^2\log(2kl)$. Therefore, $v$ has at least an in-neighbors in $\bigcup_{(i,j)\in L_4}V(P_{i,j}^{1,1})$.  By Claim \ref{claim9} (iii), we color $v$ with $I$, and $v$ satisfies (s2) and (s4). Then $Z_A$ stays the same.

Note that we add at most $2k$ vertices to $Z_A$ for each vertex $v\in E_A \setminus (C_5\cup W)$.  So at the end of Step 1, we will still have $|Z_A| \leq  2k|E_A|$.

\textbf{Step 2.} We can color each vertex in $E_B \setminus (C_5\cup W)$ in turn as well as some set $Z_B$ of additional vertices in such a way that all the vertices in $E_B \setminus (C_5\cup W)$ satisfy (s1) and (s3). At first, let $Z_B =\emptyset$. At each substep, we
will update $Z_B$ such that

($a^\prime$) $Z_B$ consists of colored vertices and $Z_B\cap (C_5 \cup E_B) = \emptyset$,

 ($b^\prime$) every colored vertex lies in $C_5 \cup E_B \cup  Z_B$,

 ($c^\prime$) $|Z_B| \leq  2k|E_B|$ and so $|Z| \leq  4k|E_B|$, where $Z=Z_A \cup Z_B$.

 Now suppose that we have already colored some vertices in $E_B \setminus (C_5\cup W)$ such that $Z_B$ satisfies ($a^\prime$)-($c^\prime$). Combining this with (\ref{1}), (\ref{2}) and Claim \ref{claim10} (iii) implies that
 \begin{align}\label{8}
 &\hat{\delta}^+(D)-|E\cup Z|- |C_5\setminus (\bigcup_{(i,j)\in L_3}V(P_{i,j}^2) \cup \bigcup_{(i,j)\in L_4}V(P_{i,j}^{1,1}\cup P_{i,j}^{3,3}))|\nonumber \\
 \geq &\hat{\delta}^+(D) -5k|E| - |C_5\setminus (\bigcup_{(i,j)\in L_3}V(P_{i,j}^2) \cup \bigcup_{(i,j)\in L_4}V(P_{i,j}^{1,1}\cup P_{i,j}^{3,3}))|\nonumber\\
 \geq &2500(k+l)f(k,l)+ 2k.
 \end{align}
 Since all colored vertices lie in $C_5\cup  E \cup Z$, eq.(\ref{8}) implies that for a vertex $v\in E_B \setminus (C_5\cup W)$, it satisfies one of the following statements.

 (D$1^\prime$) It has at least an out-neighbor in $\bigcup_{(i,j)\in L_3}V(P_{i,j}^2)$.

 (D$2^\prime$) It has at least $2k$ uncolored out-neighbors $v_1, v_2, \ldots , v_{2k}$ outside $E$.

  (D$3^\prime$) It has at least $ 2500k(k+l)^2\log(2kl)$ out-neighbors in $\bigcup_{(i,j)\in L_4}V(P_{i,j}^{1,1}\cup P_{i,j}^{3,3})$.

The cases (D$1^\prime$)-(D$3^\prime$) can be handled in much the same way as (D1)-(D3). Note that we add at most $2k$ vertices to $Z_B$ for each vertex in $E_B \setminus (C_5\cup W)$. Thus we always have $|Z_B|\leq 2k|E_B|$ and so $|Z| \leq 4k|E|$. After Step 2, all colored vertices in $C_5\cup E\setminus W $ are safe, and
 \begin{equation*}
   \begin{aligned}
   |C_6|&\leq |C_5|+|E|+|Z|\overset{(\ref{1}), (\ref{2})}{\leq}\frac{n}{ 50}+\frac{n}{ 40}+\frac{\hat{\delta}^+(D)}{ 1000k}+\frac{4\hat{\delta}^+(D)}{1000}\leq \frac{9n}{ 200}+\frac{\hat{\delta}^+(D)}{200}\leq \frac{n}{ 20}.
   \end{aligned}
 \end{equation*}
This completes the proof of Claim \ref{claim11}.
\end{proof}
\subsection{Construction of sets $V_1$ and $V_2$}
In this subsection, we will construct sets $V_1$ and $V_2$ as Theorem \ref{theorem1} required.

%
%

\begin{claim}\label{f}
We may color some additional vertices such that the sets $V_1:= V_I$ and $ V_2: = V_{II}$ as required in Theorem \ref{theorem1}.
\end{claim}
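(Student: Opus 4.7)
My plan is to finalize the partial coloring built up through Claims \ref{claim2}--\ref{claim11} by filling in the remaining vertices to achieve the prescribed sizes, and then verify each of the three strong $k$-connectivity requirements by exhibiting, for every $S$ with $|S|\le k-1$ and every $x,y$ in the relevant part, an $xy$-path avoiding $S$, using the safeness properties (s1)--(s4) together with the dominating structure of $A_i,B_i,P_i$. Since Claim \ref{claim11} gives $|C_6|\le n/20\le\min\{n_1,n_2\}$ and the number of uncolored vertices is $n-|C_6|\ge n_1+n_2-|C_6|$, I would color arbitrary sets of $n_1-|V_I\cap C_6|$ and $n_2-|V_{II}\cap C_6|$ uncolored vertices with $I$ and $II$ respectively, leaving any surplus uncolored. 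Every new vertex lies in $V(D)\setminus (D_0\cup E)$, so Property \ref{lemma4}(i) certifies it is safe; declaring $V_1:=V_I$, $V_2:=V_{II}$ fulfills condition (1) of Theorem \ref{theorem1}.

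To show that $D[V_1]$ is strongly $k$-connected, fix $S\subseteq V_1$ with $|S|\le k-1$ and $x,y\in V_1\setminus S$. By safeness of $x$ (property (s1)) and of $y$ (property (s2)), there exist paths $Q_1$ from $x$ to some $u\in V_1\setminus(D_0\cup E_B\cup S)$ and $Q_2$ from some $w\in V_1\setminus(D_0\cup E_A\cup S)$ to $y$, both inside $D[V_1]\setminus S$. As the $k$ sets $A_i\cup B_i\cup V(P_i)$ for $i\in[k]$ are pairwise disjoint and $|S|\le k-1$, at least one index $i\in[k]$ avoids $S$; for such $i$ I would invoke (P3) together with the transitive structure of $B_i$ (head $b_i$) and of $A_i$ (tail $a_i$) to obtain $v\in B_i\setminus\{b_i\}$ with $u\to v\to b_i$ and $v'\in A_i\setminus\{a_i\}$ with $a_i\to v'\to w$, and then concatenate $Q_1,\ u\to v\to b_i,\ P_i,\ a_i\to v'\to w,\ Q_2$ into an $xy$-path in $D[V_1\setminus S]$, since by (\ref{18}) all bridging vertices belong to $V_1$. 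The argument for $D[V_2]$ is identical with $i\in[k+1,2k]$.

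The hardest part will be $D[V_1,V_2]$, where the entire bridge must live inside the bipartite digraph $D[V_I,V_{II}]$. Using (s3) and (s4) I obtain analogous $Q_1,Q_2$ in $D[V_I,V_{II}]\setminus S$, but now the endpoints $u,w$ can each fall in either part, giving four color cases. The coloring in (\ref{18}) was designed precisely so that each case is matched by exactly one of the four intervals $[2k+1,3k]$, $[3k+1,4k]$, $[4k+1,5k]$, $[5k+1,6k]$, each carrying $k$ pairwise disjoint bridges: the placement of $a_i,b_i$ in $V_1$ or $V_2$ together with the parity of $P_i$ forced by Definition \ref{definition2} makes every arc of the bridge $u\to v\to b_i\to\cdots\to a_i\to v'\to w$ cross between $V_1$ and $V_2$. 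Pigeonhole within the correct interval again produces a usable $i$ and the argument concludes as before. The remaining difficulty is therefore a somewhat tedious four-case verification against (\ref{18}) and Definition \ref{definition2} that, in each color combination of $u$ and $w$, the selected interval really furnishes $B_i\setminus\{b_i\}$ and $A_i\setminus\{a_i\}$ of the correct opposite colors together with a $P_i$ of the required parity; this is the main obstacle I expect.
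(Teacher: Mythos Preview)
Your approach is essentially identical to the paper's: extend the coloring to hit the target sizes, then for each of $D[V_1]$, $D[V_2]$, $D[V_1,V_2]$ use the appropriate safeness condition to reach $u\notin D_0\cup E_B$ and $w\notin D_0\cup E_A$, pick an index $i$ from the relevant block of $k$ indices by pigeonhole, and bridge via $B_i\to b_i\to P_i\to a_i\to A_i$. The four-case split for $D[V_1,V_2]$ is exactly what the paper does, matching the intervals $[2k+1,3k],[3k+1,4k],[4k+1,5k],[5k+1,6k]$ to the four color combinations of $(u,w)$; this verification is routine given (\ref{18}) and the parity constraints in Definition~\ref{definition2}, so the ``main obstacle'' you anticipate is not really one.

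There is one small gap in your justification that the newly colored vertices are safe. You assert that every uncolored vertex lies in $V(D)\setminus(D_0\cup E)$ and invoke Property~\ref{lemma4}(i), but this is not quite right: Claim~\ref{claim11} only colors the vertices of $E\setminus W$, so an uncolored vertex could still lie in $W\cap E$ (recall $W=\bigcup_{(i,j)\in L\times L_0}V(P_{i,j})\setminus C_5$). The paper covers this by also invoking Claim~\ref{claim12}, which guarantees that every vertex of $W$ is safe with whichever color; you should do the same.
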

\begin{proof}
According to Claim \ref{claim11}, the number of all colored vertices is at most  $n/20$. It follows from Property \ref{lemma4} (i) and Claim \ref{claim12} that all uncolored vertices will be safe with whichever color. This implies that we may color some additional vertices such that $|V_I|=n_1 \geq n/20$, $|V_{II}|=n_2\geq n/20$, and all colored vertices are safe. Therefore, it remains to show the connectivity of $D[V_I]$, $D[V_{II}]$ and $D[V_I,V_{II}]$.

We first show that $D[V_I]$ is strongly $k$-connected. We need to
check that $D[V_I\setminus  S]$ contains a path from $x$ to $y$ for any set $S$ with $|S|\leq k-1$ and any two vertices $x, y \in V_I\setminus S$. Since each vertex is safe, (s1) implies that there
exists a path $P_x$ in $D[V_I\setminus  S]$ from $x$ to some vertex $x^\prime\in  V_I\setminus  (D_0 \cup E_B \cup S)$, and (s2) implies that there exists a path $P_y$ in $D[V_I\setminus  S]$ from some
vertex $y^\prime\in  V_I\setminus  (D_0 \cup E_A \cup S)$ to $y$. Without loss of generality assume $S\cap (A_1\cup B_1\cup V (P_1))=\emptyset$ (recall that $D[A_1]$ has a spanning transitive tournament with head $x_1$ and tail $a_1$, $D[B_1]$ has a spanning transitive tournament with tail
$y_1$ and head $b_1$, and $P_1$ is a path from $b_1$ to $a_1$). Since $x^\prime\notin D_0 \cup E_B$ and $y^\prime\notin D_0 \cup E_A$, (P3) implies that $x^\prime$ sends an arc to $x^{\prime\prime}\in B_1\setminus\{b_1\}$, and $y^\prime$ receives an arc from $y^{\prime\prime}\in A_1\setminus\{a_1\}$. Altogether this yields that
$D[V(P_x\cup P_y\cup P_1)\cup A_1\cup B_1]\subseteq  D[V_I\setminus  S]$ contains a path $P_xx^{\prime\prime}P_1y^{\prime\prime}P_y$, as desired. A similar argument shows that $D[V_{II}]$ is strongly $k$-connected too.

It remains to show
that $D[V_I, V_{II}]$ is strongly $k$-connected. We will show that $D[V_I\setminus  S,V_{II}\setminus  S]$ contains a path from $x$ to $y$ for any set $S$ with $|S|\leq k-1$ and any two vertices $x, y \in V(D)\setminus S$. Because each vertex is safe, (s3) yields that there exists a path $P_x$ in $D[V_I\setminus  S,V_{II}\setminus  S]$ from $x$ to some vertex
$x^\prime\in  V(T)\setminus  (D_0 \cup E_B \cup S)$, and (s4) yields that there exists a path $P_y$ in $D[V_I\setminus  S,V_{II}\setminus  S]$ from some
vertex $y^\prime\in  V(D)\setminus  (D_0 \cup E_A\cup S )$ to $y$. We now choose an index $i$ as follows:

(i) If $x^\prime, y^\prime\in  V_I$, let $i \in [2k + 1, 3k]$ be such that $S\cap (A_i \cup B_i \cup  V (P_i))=\emptyset$.

(ii) If $x^\prime, y^\prime\in  V_{II}$, let $i\in [3k + 1, 4k]$ be such that $S\cap (A_i \cup B_i \cup  V (P_i))=\emptyset$.

(iii) If $x^\prime\in  V_I$ and $y^\prime\in  V_{II}$, let $i \in [4k+1, 5k]$ be such that  $S\cap (A_i \cup B_i \cup  V (P_i))=\emptyset$.

(iv)  If $x^\prime\in  V_{II}$ and $y^\prime\in  V_I$, let $i \in [5k+1, 6k]$ be such that $S\cap (A_i \cup B_i \cup  V (P_i))=\emptyset$.

Note that $x^\prime\notin D_0 \cup E_B$ and $y^\prime\notin D_0 \cup E_A$, (P3) implies that $x^\prime$ sends an arc to $x^{\prime\prime}\in B_i\setminus\{b_i\}$, and $y^\prime$ receives an arc from $y^{\prime\prime}\in A_i\setminus\{a_i\}$. According to (i)-(iv), the digraph $D[V_I\setminus  S,V_{II}\setminus  S]$ contains a path $P_xx^{\prime\prime}P_iy^{\prime\prime}P_y$, as desired.
\end{proof}

\section{Concluding remark}
In this section, we obtain that a strongly $\Omega(k^3l^2t^2\log(2kl))$-connected digraph of order $n$ with $\delta(D)\geq n-l$ admits a vertex partition into $t$ disjoint sets $V_1,V_2,\ldots,V_t$ such that each of $D [V_1], \ldots,D[V_t]$, and $D[V_1,\ldots,V_t]$ is strongly $k$-connected. This result can be handled in much the same way as Theorem \ref{theorem1}, the only difference
being in the construction and coloring of almost in/out-dominating sets. To prove this result, we need to  construct $(t+4)k$  almost out-dominating sets $A_1,\ldots, A_{(t+4)k}$ and $(t+4)k$  almost in-dominating sets $B_1,\ldots, B_{(t+4)k}$ satisfy (P0)-(P4).  For each $j\in [t]$, define $j^*=[(j-1)k+1,(j-1)k+k]$. Set
$D_j= \bigcup_{i\in [j^*]}(A_i\cup B_i)$, $\bigcup\limits_{i\in[tk+1,(t+1)k]}
\{a_i,b_i\}\cup  \bigcup\limits_{i\in[(t+2)k+1,(t+3)k]}
\{b_i\}  \cup \bigcup\limits_{i\in[(t+3)k+1,(t+4)k]}
\{a_i\} \subseteq D_1$ and $\bigcup\limits_{i\in[(t+1)k+1,(t+2)k]}
( \{a_i,b_i\}) \cup \bigcup\limits_{i\in[(t+2)k+1,(t+3)k]}
\{a_i\} \cup \bigcup\limits_{i\in[(t+3)k+1,(t+4)k]}
\{b_i\}\subseteq D_2$ (See figure \ref{fig1} for $t=2$). Then we can color all vertices in $D_j$ with color $j$ and some other vertices such that all colored vertices are safe. The rest of the proof runs as the proof of Theorem \ref{theorem1}.

Inspired by the results of Gir\~{a} and Letzter \cite{An(2022)}, and  Kang and Kim  \cite{Kang(2020)}, it is natural to consider the following questions.

\begin{conjecture}
There exists a constant $c>0$ such that the vertices of every strongly $c\cdot ktl$-connected digraph of order $n$  with $\delta(D)\geq n-l$ can be partitioned into $t$ parts $V_1, V_2,\ldots, V_t$, each of $D [V_1], \ldots,D[V_t]$ and $D[V_1,\ldots,V_t]$ is a strongly $k$-connected digraph.
\end{conjecture}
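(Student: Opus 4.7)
The plan is to push the techniques of Theorem \ref{theorem1} and its $t$-part extension (outlined in the concluding remark) to their sharpest possible form, shaving the $k^2 l^2 \log(2kl)$ factors down to pure linear dependence in $k$, $t$ and $l$, in the spirit of Gir\~{a}o and Letzter~\cite{An(2022)}. Two places in the current argument produce the superfluous factors and would need to be re-engineered. First, the parameter $c = \Theta(\log(k^2))$ in Lemmas \ref{lemma1}--\ref{lemma2} is tuned so that the exceptional sets $E_{A_i}, E_{B_i}$ have size $O(\hat{\delta}^-(D)/k^2)$, but this inflates the dominating-set sizes by a logarithmic factor and scales the connectivity requirement accordingly. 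Second, and more seriously, the use of $k$-linkedness via Theorem \ref{corollary1} couples $k$ and $l$ multiplicatively wherever it is invoked.

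I would begin by replacing the wholesale use of $k$-linkedness with a surgical, one-at-a-time linking argument. Rather than demanding simultaneous $k$-linkage for the $\Theta(kt)$ pairs of heads and tails, I would process the pairs sequentially, and for each pair invoke only the strong $\Omega(k+l)$-connectivity of the residual digraph to extract one internally disjoint short path via Menger's theorem. Exploiting $\delta(D)\geq n-l$, each such path may be taken of length $O(n(k+l)/\hat{\delta}^+(D))$, which is small enough to leave the bulk of the digraph intact for the next pair. This alone should reduce the connectivity cost of the linking step from $\Omega(k^2 l)$ to $\Omega(ktl)$.

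The hardest step, and what I expect to be the principal obstacle, is handling the parity/colour constraint on linking paths inside $D[V_1,\ldots,V_t]$ for general $t$. The $t=2$ proof relies on the parity Definition \ref{definition2}, but for $t\geq 3$ a correct linking path joining a head in $V_i$ to a tail in $V_j$ must traverse parts in a sequence compatible with the prescribed partition, which is a constraint modulo $t$ rather than modulo $2$. I would introduce a coloured notion of correct path and aim for a coloured Menger-type lemma: in a strongly $\Omega(ktl)$-connected digraph with $\delta(D)\geq n-l$, between any two prescribed endpoints there exist $k$ internally disjoint paths realising any given colour-transition pattern of bounded complexity. I expect this can be attacked via a random $t$-colouring of $V(D)$ combined with a path-rotation and absorbing argument inspired by \cite{An(2022)} to correct colour patterns that deviate from the target.

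Once the coloured linking lemma is in place, the colouring-and-safety framework of Claims \ref{claim1}--\ref{claim12} should transfer with purely bookkeeping changes: Claim \ref{claim1} becomes a $t$-colour statement with $|C^\prime| \leq O(ktl^\prime + ktl)$, the dominating-set construction of Subsection 3.1 produces $\Theta(kt)$ pairs whose exceptional sets sum to $O(n/t)$, and the case analysis in Claim \ref{claim11} scales linearly in $t$. A careful accounting that treats $k$, $t$ and $l$ on equal footing and never allows any intermediate quantity to grow faster than linearly in any single parameter should then yield the conjectured $c\cdot ktl$ connectivity bound, with the constant $c$ determined by the coloured Menger step.
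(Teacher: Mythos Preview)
The statement you are addressing is Conjecture~4.1, not a theorem: the paper does not prove it and offers no proof to compare against. It is posed as an open problem, motivated by the linear bound of Gir\~{a}o and Letzter for tournaments and by the $t$-part extension sketched in the concluding remark, which only achieves connectivity $\Omega(k^3 l^2 t^2 \log(2kl))$.

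What you have written is not a proof but a research outline, and you implicitly acknowledge this: phrases like ``I would begin by'', ``I expect this can be attacked'', and ``should transfer with purely bookkeeping changes'' signal intentions rather than arguments. The central gap is the ``coloured Menger-type lemma'' you postulate. You correctly identify it as the principal obstacle, but you offer no mechanism for producing $k$ internally disjoint paths with a prescribed colour-transition pattern under only $\Omega(ktl)$ connectivity; the random colouring plus rotation/absorption idea is a gesture toward~\cite{An(2022)}, not an argument, and that paper works in tournaments where every pair of vertices is adjacent, which is precisely what fails here by an $l$-dependent margin. Moreover, your sequential Menger replacement for $k$-linkedness overlooks why the paper extracts $800f(k,l)$ internally disjoint $i$-paths per pair rather than one: the redundancy is needed so that after the sets $P_I$, $P_{II}$, $C_3'$ of Claims~\ref{claim4}--\ref{claim7} are carved out, enough clean paths survive to build the correct $i$-paths in Claim~\ref{claim8}. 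A one-path-at-a-time extraction gives no such slack. Until both of these steps are made precise, the conjecture remains open.
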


\begin{conjecture}
Let $k, t,l, m, n, q, a_1,\ldots, a_t \in \mathbb{N}$ with $t, m \geq 2$, $\sum_{i\in [t]} a_i \leq n$ and $a_i\geq n/(10tm)$ for each $i \in [t]$. Suppose that D is a strongly $c\cdot ktlm$-connected digraph of order n  with $\delta(D)\geq n-l$, and $Q_1,\ldots, Q_t \subseteq V (D)$ are t disjoint sets with $|Q_i|\leq q$ for each $i\in [t]$. Then there exist disjoint sets $W_1, \ldots, W_t$ of $V(D)$ satisfying the following.

\noindent(1) $Q_i \subseteq W_i$.

\noindent (2) Each of $D[W_1], \ldots, D[W_t]$ and $D[W_1,\ldots,W_t]$ is strongly $k$-connected.

\noindent (3) $|W_i| = a_i$.
\end{conjecture}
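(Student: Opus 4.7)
The plan is to generalize the proof of Theorem \ref{theorem1} along two orthogonal axes simultaneously, following the recipe sketched in the Concluding remark: first, enlarge the construction from a bipartition to a $t$-partition by using $\Theta(t^2 k)$ almost-dominating pairs $(A_i,B_i)$ instead of $6k$, indexed so that each part $W_j$ and each ordered pair $(W_p,W_q)$ receives $k$ dedicated pairs serving as its internal-connectivity certificates; second, incorporate the prescribed sets $Q_j$ by pre-coloring each $q\in Q_j$ with color $j$ and, as in Kang--Kim \cite{Kang(2020)}, greedily reserving $k$ in-neighbors and $k$ out-neighbors of $q$ of every color required to make $q$ safe in each relevant induced subgraph. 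The rest of the machinery --- correct $i$-paths (Definition \ref{definition2}), Proposition \ref{proposition2}-based linkage of $b_i$ to $a_i$, and the Claim \ref{claim1}-style propagation of safety --- would then carry over essentially verbatim, yielding the same path-routing argument that certifies $k$-connectivity of every $D[W_j]$ and of the multipartite subgraph $D[W_1,\ldots,W_t]$.

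The size condition $a_i\ge n/(10tm)$ would be used exactly as in Theorem \ref{theorem4} to ensure that, once the $O(ktlm)$ safe colored vertices are fixed, each part $W_i$ still has enough room to absorb its target share $a_i$ of the uncolored vertices; these can be distributed freely because Property \ref{lemma4}(i) makes every uncolored vertex safe with whichever color is assigned. This is the mechanism by which the factor $m$ enters the target connectivity.

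The essential obstacle is the linear-in-each-parameter target $c\cdot ktlm$. Even in the baseline case $t=2$, $m=10$, $Q_i=\emptyset$ covered by Theorem \ref{theorem1}, we only attain $O(k(k+l)^2\log(2kl))$. Two places are responsible for the super-linear dependence: Lemma \ref{lemma1}, whose iterative halving produces almost-dominating sets of size $c=\Theta(\log k)$, and Claim \ref{claim4}, in which Proposition \ref{proposition1} applied to the interior regions $P^1\cup P^3$ contributes an extra $(k+l)\log(2kl)$ factor. To reach a truly linear bound one would need qualitatively new ideas: for instance a single-step constant-size dominating-set construction that exploits the hypothesis $\delta(D)\ge n-l$ through an expander-mixing or regularity-style argument, coupled with a sharpening of Theorem \ref{corollary1} that provides explicit length control on the internally disjoint paths and so bypasses the pigeonhole loss in Proposition \ref{proposition2}(ii). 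A more modest intermediate goal, likely within reach by directly fusing our approach with Kang--Kim's, is connectivity $O(k^2 tlm\log(klm))$, which would already substantially improve the current $O(qk^2l(k+l)^2tm^2\log m)$ bound of Theorem \ref{theorem4} while also upgrading its conclusion to include the strong $k$-connectivity of $D[W_1,\ldots,W_t]$.
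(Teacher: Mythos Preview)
The statement in question is a \emph{conjecture} (Conjecture~4.2), not a theorem: the paper offers no proof, only posing it as an open problem in the concluding remarks. Your proposal correctly recognizes this and is not a proof attempt at all but rather an honest discussion of why the paper's own techniques fall short of the linear bound $c\cdot ktlm$.

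Your diagnosis is accurate and well-targeted. The paper's Concluding remark does sketch how to extend Theorem~\ref{theorem1} to $t$ parts using $(t+4)k$ almost-dominating pairs, but only claims a bound of order $\Omega(k^3 l^2 t^2 \log(2kl))$, far from linear. You correctly pinpoint the two bottlenecks: the $\Theta(\log k)$-size almost-dominating sets from Lemma~\ref{lemma1} and the $(k+l)\log(2kl)$ loss from Proposition~\ref{proposition1} in Claim~\ref{claim4}. Your observation that even the baseline case $t=2$, $Q_i=\emptyset$ already exceeds the conjectured bound is the decisive point: any proof of Conjecture~4.2 would in particular improve Theorem~\ref{theorem1} itself, which the paper does not do. The suggestion that Gir\~{a}o--Letzter-style ideas (which achieve linear bounds for tournaments without the bipartite-connectivity requirement) might be needed is reasonable, though adapting them to the $\delta(D)\ge n-l$ setting and to the stronger conclusion involving $D[W_1,\ldots,W_t]$ remains genuinely open.

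In short: there is nothing to compare against, and your assessment that the conjecture is beyond the paper's methods is correct.
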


\section{Acknowledgements}
We are extremely grateful to the anonymous referees for their careful reading and suggestions.

\end{document}